\documentclass[12pt,twoside,reqno]{amsart}
\usepackage{amsmath, amsfonts, amsthm, amssymb,amscd, graphicx, amscd}
\usepackage{float,epsf}
\usepackage[english]{babel}
\usepackage{enumerate}
\usepackage{tikz}

\usepackage{srcltx}
\usepackage{geometry}
\usepackage{verbatim}
\usepackage{mathrsfs}
\usepackage{hyperref}
\usepackage{enumitem} 
\usepackage{esint}

\newcommand{\R}{\mathbb{R}}

\newcommand{\tn}{\textnormal}

\newcommand{\eps}{\varepsilon}

\newtheorem{theorem}{Theorem}[section]
\newtheorem{lemma}[theorem]{Lemma}

\newtheorem{definition}[theorem]{Definition}

\newtheorem{proposition}[theorem]{Proposition}
\newtheorem{corollary}[theorem]{Corollary}
\newtheorem{remark}[theorem]{Remark}
\newtheorem{example}[theorem]{Example}
\newtheorem{notation}[theorem]{Notation}

\numberwithin{equation}{section}
\numberwithin{figure}{section}

\newcommand{\norm}[1]{\left\|#1\right\|}

\newcommand{\FF}{{\boldsymbol F}}

\renewcommand{\div}{\mathrm{div}} 

\renewcommand{\div}{\text{\sl div}\,} 

\def\intave#1{\int_{#1}\hbox{\llap{$\raise2.3pt\hbox{\vrule
height.9pt width7pt}\phantom{\scriptstyle{#1}}\mkern-2mu$}}}

\begin{document}
\title[On Two Species Long Range Segregation in an Annular Domain]{On Two Species Long Range Segregation in an Annular Domain}

\author{Howen Chuah}
\address{Howen Chuah,
Department of Mathematics, Purdue University,
 150 N. University Street,
 West Lafayette, IN 47907-2067, USA}
\email{hchuah@purdue.edu}

\keywords{Laplace operators, segregation models, free boundary problems, free boundary conditions, regular points}
\subjclass[2020]{Primary: 35J47, 35R35;
Secondary: 35Q92
}

\maketitle

\begin{abstract}

We consider a system of elliptic equations, depending on a small parameter $\epsilon > 0$, which models the long range segregation of populations. The system has been previously studied in \cite{CL2} for the regularity of the free boundary in dimension $2$ and in \cite{ChPaTo26_2} and \cite{ChPaTo26_3} for the partial regularity of the free boundary in higher dimensions. In this paper, we consider the special case with $K = 2$ populations in an annular domain in arbitrary dimensions. Using the uniqueness and the rotational invariance of the solution, we show that the free boundary consist of concentric spheres. Moreover, by an application of the free boundary condition derived in \cite{CL2}, we show that the free boundary is uniquely determined. We also examine how the radius of the free boundary change according to the domain, the interaction distance and the boundary data. In particular, we show that the free boundary converges to that of the adjacent model as the interaction distance tends to zero.

We also study the one parameter family of the elliptic system, in which the domain, the interaction distance, and the boundary data depends on $t$. We derive an ODE for the radius of the free boundary in dimension $2$ and dimension $n \geq 3$ separately. Several examples are given and discussed.

\,\\\end{abstract}

\vspace{-6mm}
\begingroup\hypersetup{hidelinks} 
\endgroup
\newcommand{\sizeofboundary}{R}

\section{Introduction}

In this paper, we study the following system of elliptic equations that models the long-range segregation of populations:  

\begin{equation}
\label{main_problems}
\begin{cases} \Delta u^{\varepsilon}_{i}= \frac{1}{\eps^2} u^{\eps}_i  \sum_{j \neq i}  H_R(u^{\eps}_j)(x)\quad & \text{ in } \Omega,\\
        u^{\eps}_i= f_i & \text{ on } (\partial \Omega)_{\leq {\sizeofboundary}}, \\
        
        u^{\eps}_i \geq 0 & \text{ in } \Omega \cup (\partial \Omega)_{\leq \sizeofboundary},
       \end{cases}
\end{equation}
for  $i=1,\ldots, K$, 
where $\Omega$ is a bounded Lipschitz domain in  $\R^n$, $\eps >0$, $0 < \sizeofboundary \leq 1$. The boundary neighborhood is defined as 
\begin{equation*}
    (\partial \Omega)_{\leq \sizeofboundary}:= \{x \in \Omega^c: d(x, \partial \Omega)\leq \sizeofboundary\} ,
\end{equation*}
where $d(x,\partial \Omega) := \inf_{y \in \partial \Omega}|x-y|$ denotes the Euclidean distance from $x$ to $\partial \Omega$, and $\Omega^c := \mathbb{R}^n \setminus \Omega$. 
The boundary data are nonnegative H{\"o}lder continuous functions
with supports separated by at least distance $\sizeofboundary$. Also, we assume 
\begin{equation}
\label{H1}
H_{\sizeofboundary}(w)(x) = \fint_{B_R(x)} w(y)dy
\end{equation}. 

We assume that the boundary data satisfy  
\begin{equation}    
\label{fiassumption}
        f_i : (\partial \Omega)_{\leq R} \rightarrow \mathbb{R}, \; f_i \geq 0, \; f_i \neq 0, \quad \textnormal{$f_i$ is H\"older continuous,} 
    \end{equation}
and that there is a constant $c > 0$ such that, for any $x \in \partial \Omega \cap \text{supp } f_i$,
\begin{equation}
\label{size_of_ball_intersect_support}
|{B}_r(x) \cap \text {supp }f_i| \geq c\,|{B}_r(x)|,
\end{equation}
and
       \begin{equation}\label{f_i-f_j_disjointsupport}       
        d(\tn{supp} \, f_i, \tn{supp} \, f_j) \geq \sizeofboundary \textnormal{ for all } i \neq j. \\
        \end{equation}

In this paper, we study a special case of \eqref{main_problems} where $K = 2$,  
and $\Omega = B_{R_2}(0) \setminus \overline{B_{R_1}(0)} = \{x \in \mathbb{R}^n : R_1 < \norm{x} < R_2\}$ is the annular domain in $\mathbb{R}^n$ bounded by two concentric spheres with radius $R_1$ and $R_2$ respectively (with $\sizeofboundary < R_1 < R_1 + \sizeofboundary \leq R_2$), so
\begin{equation}
\begin{split}
(\partial \Omega)_{\leq \sizeofboundary} := \{x \in \mathbb{R}^n \setminus \Omega: d(x, \partial \Omega) \leq \sizeofboundary\} 
\\ = \{x \in \mathbb{R}^n : R_1 - R \leq \norm{x} \leq R_1\} \cup \{x \in \mathbb{R}^n : R_2 \leq \norm{x} \leq R_2 + R\}.
\end{split}
\end{equation}  

For the boundary data, we assume that 

\begin{equation}
\begin{split}
    f_1(x) = \begin{cases} M_1, & R_1 -\sizeofboundary \leq \norm{x} \leq R_1,
    \\ 0, & R_2 \leq \norm{x} \leq R_2 + \sizeofboundary.
    \end{cases}
\end{split}    
\end{equation}

\begin{equation}
\begin{split}
    f_2(x) = \begin{cases} 0, & R_1 -\sizeofboundary \leq \norm{x} \leq  R_1,
    \\ M_2, & R_2 \leq \norm{x} \leq R_2 + \sizeofboundary, 
    \end{cases}
\end{split}    
\end{equation}
where $M_1$ and $M_2$ are two given positive constants.

Consequently, the system \eqref{main_problems} can be rewritten as 

\begin{equation}
\label{main_problems_rewritten}
\begin{cases} \Delta u^{\varepsilon}_{1}(x) = \frac{1}{\eps^2} u^{\eps}_1(x)   H_R(u^{\eps}_2)(x)\quad & \text{ in } \{x \in \mathbb{R}^n: R_1 < \norm{x} < R_2\},\\
\Delta u^{\varepsilon}_2(x) = \frac{1}{\eps^2} u^{\eps}_2(x)   H_R(u^{\eps}_1)(x)\quad & \text{ in } \{x \in \mathbb{R}^n: R_1 < \norm{x} < R_2\},\\
        u^{\eps}_1 = M_1 & \text{ on } \{x \in \mathbb{R}^n: R_1 -\sizeofboundary \leq \norm{x} \leq R_1\}, \\
        u^{\eps}_1 = 0 & \text{ on } \{x \in \mathbb{R}^n: R_2 \leq \norm{x} \leq R_2 + \sizeofboundary\}, \\
        u^{\eps}_2 = 0 & \text{ on } \{x \in \mathbb{R}^n: R_1 -\sizeofboundary \leq \norm{x} \leq R_1\}, \\
        u^{\eps}_2 = M_2 & \text{ on } \{x \in \mathbb{R}^n: R_2 \leq \norm{x} \leq R_2 + \sizeofboundary\},
       \end{cases}
\end{equation}

The existence of positive solutions $(u_1^\eps,\ldots,u_K^\eps)$ of the system \eqref{main_problems} was proved in \cite{CL2}. Uniqueness of solutions to system \eqref{main_problems} was proved in \cite{Bozorgnia}. It was also shown in \cite{CL2} that solutions converge to a limit configuration $(u_1,\ldots, u_K)$ as $\eps \to 0$, where the supports of the populations $u_i$  are mutually disjoint and separated from each other by distance $\sizeofboundary$. 
The regularity of the free boundary for $n=2$ was also established. For system \eqref{main_problems}, one of the main challenges in the analysis of the free boundaries is that classical techniques (i.e., monotonicity formulas) cannot be used. The techniques developed in \cite{CL2} rely on the concepts of {\it asymptotic cone} and {\it asymptotic angle}. At a singular point, the angle is measured as the intersection of asymptotic cones.  
This construction was generalized in \cite{ChPaTo26_2} to characterize the regular and singular points in any dimension in terms of angles and densities.

For the case of $n=2$, \cite{CL2} showed that the singular points are isolated and the regular set is open and locally $C^1$. It was also shown, under additional conditions, that the free boundary is Lipschitz. A result on the equality of angles is also established. Another tool for $n=2$ is the construction of harmonic functions (i.e., barrier functions) on cones vanishing on the boundary.

For higher dimensions, \cite{ChPaTo26_2} showed that if the angles at the singular points are bounded away from $\frac{n\omega_n}{2}$, the regular set is open and locally $C^1$. It was also shown that under a convexity condition, the free boundary consist of finitely many hyperplanes of dimension $n-1$, and the angles of the singular points are bounded above by $\frac{n\omega_n}{3}$. Moreover, if there are two free boundary points that are at distance $\sizeofboundary$ apart form each other, either they are both regular or they are both singular. A symmetry condition on the free boundary is also derived.

However, in higher dimensions the upper bound on the Hausdorff dimension of the singular set remains open. The only known contribution so far in this direction is \cite{ChPaTo26_3}, where it was shown that there are two concrete examples in which the singular set is nonempty with Hausdorff dimension exactly $n-2$. The proof is based on finite group action, the uniqueness of the solution, and the rotational invariance of the system \eqref{main_problems}.

System \eqref{main_problems} is an example of \eqref{modelproblem}, the Gause-Lotka-Volterra system in population dynamics, that models coexistence of species that live in the same territory, diffuse, and compete for limited resources. This can be modeled by the following equation:
\begin{equation}\label{modelproblem}L_i(u^\eps _i)=\frac{u^\eps_i}{\eps^2}F(u^\eps_1,\ldots,u^\eps_K)
\end{equation}
in some domain $\Omega$, where $u^\eps_i$ is a positive function representing the density of the $i$-th species, $L_i$ encodes  the diffusion of  $u^\eps_i$, and   $u_i^\eps F(u^\eps_1,\ldots,u^\eps_K)/\eps^2$ models  the  attrition of the species $i$ due to competition with the others. The interaction functional  $F$ is strictly positive whenever the supports of two or more species overlap.
The smaller the parameter $\eps$, the stronger the competition among species. 
In the limit as $\eps\to0^+$ the high  competition  forces the species  to segregate, meaning $u_iu_j=0$ for $j\neq i$.

Another example of system \eqref{modelproblem} is given by 
\begin{equation}
\label{adjacentsegregation}
\begin{cases} \Delta u^{\varepsilon}_{i}= \frac{1}{\eps^2}   \sum_{j \neq i}  u^{\eps}_iu^{\eps}_j\quad & \text{ in } \Omega,\\
        u^{\eps}_i= f_i & \text{ on } \partial \Omega, \\
        u^{\eps}_i \geq 0 & \text{ in } \overline{\Omega}.
\end{cases}
\end{equation}
The existence of positive solutions to \eqref{adjacentsegregation} was initially investigated by Dancer and Du \cite{DancerDu1, DancerDu2} in the case of three species. 
Convergence to a segregated limit configuration as  $\eps\to0^+$ was later proven by Dancer, Hilhorst, Mimura, and Peletier \cite{DancerHilMimPel}. 
 More general classes of linear competitive systems, including \eqref{adjacentsegregation} as a special case, have been studied by Conti, Terracini, and Verzini \cite{Conti4, Conti3, Conti5}. %
 We also refer to \cite{CL7, Conti6} for related optimal partition problems involving the first eigenvalue of the Laplace operator.  
 The geometric properties of the free boundaries $\partial\{u_i>0\}\cap\Omega$ for the system \eqref{adjacentsegregation} have been investigated by Caffarelli, Karakhanyan and Lin \cite{CL5} (see also \cite{CL4}). It was shown that the free boundary splits into two parts: a regular set,  which is a locally  analytic surface, and a singular set, which is a closed set of Hausdorff dimension at most $n-2$. Singular points occur where the boundaries of three or more connected components of the supports intersect. See \cite{TavarezTerracini} for similar results applied to a broader class of systems.

{

The system \eqref{adjacentsegregation}, when the Laplace operator is replaced by the fully nonlinear negative Pucci operator, has been studied by Quitalo in \cite{V} and Caffarelli, Quitalo, Patrizi, and Torres in {\cite{CL3}}. 


The interaction between the populations in \eqref{adjacentsegregation} is local, meaning that it depends only on the value of the densities, $u_i(x)$, at the point $x$. The segregation is adjacent since the supports of the populations have a common boundary. However, there are many processes where the growth of species $i$ is inhibited by populations $j$ occupying an entire neighborhood around $x$, see for example \cite{CuMaMa, MiEiFang}. Caffarelli, Patrizi, and Quitalo \cite{CL2} introduced system \eqref{main_problems} with the Laplace operator as an example to model non-local interactions. The same system was studied by the author, Patrizi, and Torres in \cite{ChPaTo26} with Laplacian replaced by the negative Pucci operator.  
When $H$ is given by \eqref{H1} with $w^2(y)$ in place of $w(y)$, minimizing solutions and the limiting configurations of \eqref{main_problems} have been studied in \cite{NS1,NS2}.

We state our first main result as follows.

\begin{theorem}
\label{Main_Result_1}

{

Consider the system \eqref{main_problems_rewritten}, where $\Omega = \{x \in \mathbb{R}^n: R_1 < \norm{x} < R_2\}$, $M_1 > 0$, $M_2 > 0$, $\sizeofboundary > 0$, $0 < \sizeofboundary < R_1 < R_2$, $R_1 + \sizeofboundary < R_2$, and the function $H_{\sizeofboundary}(w)$ is given by \eqref{H1}. Let $(u^{\epsilon}_1, u^{\epsilon}_2) \in C^2(\Omega; \mathbb{R}^2) \cap C^0(\overline{\Omega}; \mathbb{R}^2)$ be a solution of \eqref{main_problems_rewritten} for any $\epsilon > 0$, and suppose that $\{(u^{\epsilon}_1, u^{\epsilon}_2)\}_{\epsilon > 0}$ converges along a subsequence as $\epsilon \rightarrow 0^+$ to $(u_1, u_2)$. Then we have the followings:
\begin{enumerate}
\item There exists a unique $r^* \in [R_1, R_2 -\sizeofboundary], r^* = r^*(M_1, M_2, \sizeofboundary, R_1, R_2)$ such that 
\begin{equation}
\begin{split}
\partial \{u_1 > 0\} \cap \Omega = \{x \in \mathbb{R}^n: \norm{x} = r^*\}, \\
\partial \{u_2 > 0\} \cap \Omega = \{x \in \mathbb{R}^n: \norm{x} = r^* + \sizeofboundary\}.
\end{split}
\end{equation}
In particular, all the free boundary points are regular.
\item For $n = 2$, $r^*$ is the unique solution of the equation 
\begin{equation}
     \frac{\log (\frac{r^*}{R_1})}{\log (\frac{R_2}{r^* + \sizeofboundary})} - \frac{M_1}{M_2} = 0
\end{equation}
in the interval $[R_1, R_2]$.
\item For $n \geq 3$, $r^*$ is the unique solution of the equation 
\begin{equation}
    \frac{\frac{1}{R_1^{n-2}} - \frac{1}{(r^*)^{n-2}}}{\frac{1}{(r^* + \sizeofboundary)^{n-2}} - \frac{1}{R_2^{n-2}}} - \frac{M_1}{M_2} = 0
\end{equation}
in the interval $[R_1, R_2]$.
\item In any dimensions, for fixed $M_2, \sizeofboundary, R_1, R_2$, $r^* = r^*(M_1, M_2, \sizeofboundary, R_1, R_2)$ is increasing as a function of $M_1$, and
\begin{equation}
\begin{split}
\lim_{M_1 \rightarrow 0^+}r^*(M_1, M_2, \sizeofboundary, R_1, R_2) = R_1, \\
\lim_{M_1 \rightarrow \infty} r^*(M_1, M_2, \sizeofboundary, R_1, R_2) = R_2 -\sizeofboundary.    
\end{split}
\end{equation}
\item In any dimensions, for fixed $M_1, \sizeofboundary, R_1, R_2$, $r^* = r^*(M_1, M_2, \sizeofboundary, R_1, R_2)$ is decreasing as a function of $M_2$, and 
\begin{equation}
\begin{split}
\lim_{M_2 \rightarrow 0^+}r^*(M_1, M_2, \sizeofboundary, R_1, R_2) = R_2 - \sizeofboundary, \\
\lim_{M_2 \rightarrow \infty} r^*(M_1, M_2, \sizeofboundary, R_1, R_2) = R_1.  
\end{split}
\end{equation}
\item In any dimensions, for fixed $M_1, M_2, R_1, R_2$, $r^* = r^*(M_1, M_2, R, R_1, R_2)$ is decreasing as a function of $\sizeofboundary$, and
\begin{equation}
\begin{split}
\lim_{\sizeofboundary \rightarrow 0^+} r^*(M_1, M_2, \sizeofboundary, R_1, R_2) = r_0 = r_0(M_1, M_2, R_1, R_2),  
\end{split}
\end{equation}
where 
\begin{equation}
\begin{split}
    r_0 = r_0(M_1, M_2, R_1, R_2) = \begin{cases} R_1^{\frac{M_2}{M_1 + M_2}}R_2^{\frac{M_1}{M_1 + M_2}}, & n = 2, \\
    (\frac{M_1}{M_1 + M_2}\frac{1}{R_2^{n-2}} + \frac{M_2}{M_1 + M_2}\frac{1}{R_1^{n-2}})^{\frac{1}{2-n}}, & n \geq 3, 
    \end{cases}
\end{split}
\end{equation}
\end{enumerate}
}
\end{theorem}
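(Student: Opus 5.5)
Our plan is to combine rotational invariance with uniqueness to reduce everything to a one-dimensional problem in the radial variable, and then pin down the free boundary using the free boundary condition of \cite{CL2}.

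\textbf{Radial symmetry and segregation.} Since the annulus, the boundary data, and the operator $H_R$ are invariant under orthogonal transformations, for every $Q\in O(n)$ the pair $(u_1^\eps\circ Q,u_2^\eps\circ Q)$ is again a solution of \eqref{main_problems_rewritten}. By the uniqueness result of \cite{Bozorgnia} it coincides with $(u_1^\eps,u_2^\eps)$. Hence each $u_i^\eps$ is radial, and so is every subsequential limit $(u_1,u_2)$.

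From \cite{CL2} we know that $u_1,u_2$ have disjoint supports at distance at least $R$. We also know that $u_i$ is harmonic in the interior of its support and subharmonic/superharmonic in the appropriate sense (each $u_i^\eps$ is subharmonic, and $u_1 - $ suitable combinations are controlled). Being radial, each support is a union of closed annuli.

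Using the maximum principle on each component, together with the boundary values, we will show that $\{u_1>0\}=\{R_1\le |x|<r^*\}$ and $\{u_2>0\}=\{|x|>r_2\}$. On these sets $u_i$ is radial harmonic with the correct boundary values: in dimension $2$,
\[
u_1=M_1\frac{\log(r^*/|x|)}{\log(r^*/R_1)},\qquad u_2=M_2\frac{\log(|x|/r_2)}{\log(R_2/r_2)},
\]
and analogously with $|x|^{2-n}$ for $n\ge 3$. A component of the support not touching the data would carry a harmonic radial function vanishing on both ends, hence would be zero.

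\textbf{Determining $r_2$ and $r^*$.} Next we need $r_2=r^*+R$ and an equation for $r^*$. This step uses the free boundary condition from \cite{CL2}: at a free boundary point $x_0\in\partial\{u_1>0\}$ with its unique closest point $y_0\in\partial\{u_2>0\}$ at distance exactly $R$, the normal derivatives satisfy
\[
|\nabla u_1(x_0)|=|\nabla u_2(y_0)|
\]
(possibly with curvature/area weights, which we check; for spheres the weights should match the balance of fluxes). The fact that the gap is exactly $R$ comes from \cite{CL2}: the free boundaries are at distance $R$, since otherwise one population could expand.

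Differentiating the explicit solutions gives
\[
\frac{M_1}{r^*\log(r^*/R_1)}=\frac{M_2}{(r^*+R)\log\bigl(R_2/(r^*+R)\bigr)}
\]
for $n=2$. We need to check that this matches the stated equation, which lacks the radii factors. The stated formula is precisely equality of the \emph{fluxes} $r^{n-1}u'(r)$ rather than of the gradients. So the correct condition from \cite{CL2} on spheres must be the integrated/flux version, and we would verify which form \cite{CL2} gives. We expect this identification to be the main obstacle. If the pointwise condition carries Jacobian weights for the map $x\mapsto x+R\nu$, the factors $(r^*/(r^*+R))^{n-1}$ convert gradient balance into flux balance, yielding exactly (2) and (3).

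\textbf{Uniqueness and qualitative properties.} Let $g(r)$ denote the left-hand ratio. Its numerator is strictly increasing in $r$ and vanishes at $R_1$. Its denominator is strictly decreasing and vanishes at $r=R_2-R$. Hence $g$ increases strictly from $0$ to $+\infty$ on $[R_1,R_2-R)$, and $g(r)=M_1/M_2$ has a unique root. This gives existence and uniqueness in (1)–(3); regularity of all points follows since the free boundaries are smooth spheres.

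For (4) and (5), $r^*=g^{-1}(M_1/M_2)$, so $r^*$ is increasing in $M_1$ and decreasing in $M_2$. The limits follow from $g(R_1^+)=0$ and $g\to\infty$ as $r\to (R_2-R)^-$.

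For (6), at fixed $r$ the denominator of $g$ decreases in $R$, so $g$ increases in $R$, and therefore the root decreases. As $R\to0$, the equation converges to $g_0(r)=M_1/M_2$. Solving it explicitly gives $r_0$: in dimension $2$, $M_2\log(r/R_1)=M_1\log(R_2/r)$, and similarly for $n\ge3$.
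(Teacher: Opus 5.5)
Your plan matches the paper's proof step for step. Orthogonal invariance plus the uniqueness theorem of \cite{Bozorgnia} gives radial symmetry; your maximum-principle argument excluding extra annular components of the supports is a detail the paper passes over. The gap of exactly $R$ comes from \cite{CL2}, the explicit radial harmonic profiles are inserted into the free boundary condition, and (4)--(6) follow from the monotonicity of $\phi_{R_1,R_2,R,n}$ in $x$ and in $R$.

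The point you left open resolves as you guessed. \cite[Thm.~9.2]{CL2} states $u^1_\nu(x_0)/u^2_\nu(y_0)=\prod_{i}\chi_i(x_0)/\chi_i(y_0)$, and for the two spheres this equals $\bigl((r^*+R)/r^*\bigr)^{n-1}$; its hypothesis $\chi_i<1/R$ holds because $r^*\ge R_1>R$. So the unweighted gradient equality you first wrote is false here, but the curvature-weighted version is exactly the flux balance that yields (2) and (3).
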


\begin{remark}
\label{Remark_about_Main_Result_1}

{\rm

We elaborate more on part (6) of Theorem \ref{Main_Result_1}. Consider the local analogue (by setting $\sizeofboundary = 0$) of \eqref{main_problems_rewritten}. That is, we consider
\begin{equation}
\label{main_problems_local}
\begin{cases} \Delta u^{\varepsilon}_{1}(x) = \frac{1}{\eps^2} u^{\eps}_1(x)   u^{\eps}_2(x)\quad & \text{ in } \{x \in \mathbb{R}^n: R_1 < \norm{x} < R_2\},\\
\Delta u^{\varepsilon}_2(x) = \frac{1}{\eps^2} u^{\eps}_2(x)   u^{\eps}_1(x)\quad & \text{ in } \{x \in \mathbb{R}^n: R_1 < \norm{x} < R_2\},\\
        u^{\eps}_1 = M_1 & \text{ on } \{\norm{x} = R_1\}, \\
        u^{\eps}_1 = 0 & \text{ on } \{\norm{x} = R_2\}, \\
        u^{\eps}_2 = 0 & \text{ on } \{\norm{x} = R_1\}, \\
        u^{\eps}_2 = M_2 & \text{ on } \{\norm{x} = R_2\},
       \end{cases}
\end{equation}
Then the difference $u(x) = u_1(x) - u_2(x)$ satisfies 
\begin{equation}
\begin{split}
\begin{cases} \Delta u = 0 & \text{ in } \{x \in \mathbb{R}^n: R_1 < \norm{x} < R_2\}, \\
u = M_1 & \text{ on } \{\norm{x} = R_1\}, \\
u = -M_2 & \text{ on } \{\norm{x} = R_2\}, \\
\end{cases}
\end{split}
\end{equation}
It follows that $\partial \{u_1 > 0\} \cap \Omega = \partial \{u_2 > 0\} \cap \Omega = \{\norm{x} = r_0\}$, which is also the zero level set of the function $u = u_1 - u_2$. Therefore, in part (6) of Theorem \ref{Main_Result_1}, it was asserted that as $\sizeofboundary \rightarrow 0^+$, the free boundary for the long range case in \eqref{main_problems_rewritten} converges to the free boundary for the adjacent case \eqref{main_problems_local}. This connects the theory of long range interaction with that of the local interaction in this setting.  
}
\end{remark}

In this paper, we also study the more general case in which $R_1, R_2, M_1, M_2$, and $\sizeofboundary$ depends on a parameter $t$, where $t$ ranges over an open interval $I$ in the real line. Therefore, we consider the following problem
\begin{equation}
\label{main_problems_rewritten_one_parameter_family}
\begin{cases} \Delta u^{\varepsilon}_{1,t}(x) = \frac{1}{\eps^2} u^{\eps}_{1,t}(x)   H_{\sizeofboundary_t}(u^{\eps}_{2,t})(x)\quad & \text{ in } \{x \in \mathbb{R}^n: R_{1,t} < \norm{x} < R_{2,t}\},\\
\Delta u^{\varepsilon}_{2,t}(x) = \frac{1}{\eps^2} u^{\eps}_{2,t}(x)   H_{\sizeofboundary_t}(u^{\eps}_{1,t})(x)\quad & \text{ in } \{x \in \mathbb{R}^n: R_{1,t} < \norm{x} < R_{2,t}\},\\ 
        u^{\eps}_{1,t}(x) = M_{1,t}& \text{ on } \{x \in \mathbb{R}^n: R_{1,t} -\sizeofboundary_t \leq \norm{x} \leq R_{1,t}\}, \\
        u^{\eps}_{1,t}(x) = 0 & \text{ on } \{x \in \mathbb{R}^n: R_{2,t} \leq \norm{x} \leq R_{2,t} + \sizeofboundary_t\}, \\
        u^{\eps}_{2,t}(x) = 0 & \text{ on } \{x \in \mathbb{R}^n: R_{1,t} -\sizeofboundary_t \leq \norm{x} \leq R_{1,t}\}, \\
        u^{\eps}_{2,t}(x) = M_{2,t} & \text{ on } \{x \in \mathbb{R}^n: R_{2,t} \leq \norm{x} \leq R_{2,t} + \sizeofboundary_t\},
       \end{cases} 
\end{equation}
and we would like to understand how the free boundary changes as $t$ varies. We first consider the case in which $n = 2$. 

The following is the second main result of the paper.

\begin{theorem}
\label{Main_Result_2}

Assume $n = 2$. Consider the one-parameter family of elliptic system \eqref{main_problems_rewritten_one_parameter_family} indexed by a parameter $t \in I$, where $I$ is an open interval in the real line, the function $H_{\sizeofboundary}(w)$ is given by \eqref{H1}, $0 < \sizeofboundary_t < R_{1,t} < R_{2,t}$, $R_{1,t} + \sizeofboundary_t < R_{2,t}$ for all $t \in I$. Take $\Omega_t := \{R_{1,t} < \norm{x} < R_{2,t}\}$ for all $t \in I$. 

Let $(u^{\epsilon}_{1,t}, u^{\epsilon}_{2,t}) \in C^2(\Omega_t; \mathbb{R}^2) \cap C^0(\overline{\Omega_t}; \mathbb{R}^2)$ be a solution of \eqref{main_problems_rewritten_one_parameter_family} for any $\epsilon > 0, t \in I$, and suppose that $\{(u^{\epsilon}_{1,t}, u^{\epsilon}_{2,t})\}_{\epsilon > 0}$ converges along a subsequence as $\epsilon \rightarrow 0^+$ to $(u_{1,t}, u_{2,t})$. We also assume that the mapping $I \rightarrow \mathbb{R}$ given by $t \mapsto R_{1,t}$, $t \mapsto R_{2,t}$, $t \mapsto \sizeofboundary_{t}$, $t \mapsto {M}_{1,t}$, $t \mapsto {M}_{2,t}$ are all differentiable on $I$. For any $t \in I$, let $r^* = r^*_{t} \in [R_{1,t}, R_{2,t} - \sizeofboundary_t]$ be the unique number as guaranteed in Theorem \ref{Main_Result_1} (1) such that 
\begin{equation}
\begin{split}
\partial \{u_{1,t} > 0\} \cap \Omega_t = \{\norm{x} = r^*_{t}\},
\\ \partial \{u_{2,t} > 0\} \cap \Omega_t = \{\norm{x} = r^*_{t} + \sizeofboundary_t\}.
\end{split}
\end{equation}
Then we have the followings.
\begin{enumerate}
\item The mapping $I \rightarrow \mathbb{R}$ given by $t \mapsto r^*_t$ is also differentiable on $I$, with 
\begin{equation}
    \left(\frac{d}{dt}r^*_t\right)\left(\frac{1}{x_{2,t}r^*_t} + \frac{x_{1,t}}{x_{2,t}^2(r^*_t + \sizeofboundary_t)}\right) = \frac{d}{dt}\left(\frac{M_{1,t}}{M_{2,t}}\right) + \frac{\frac{d}{dt}R_{1,t}}{R_{1,t}}\frac{1}{x_{2,t}} + \frac{x_{1,t}}{x_{2,t}^2}\frac{\frac{d}{dt}R_{2,t}}{R_{2,t}} - \frac{x_{1,t}\frac{d}{dt}\sizeofboundary_t}{x_{2,t}^2(r^*_t + \sizeofboundary_t)},
\end{equation}
where 
\begin{equation}
    x_{1,t} = \log(\frac{r^*_t}{R_{1,t}}) > 0, x_{2,t} = \log(\frac{R_{2,t}}{r^*_{t} + \sizeofboundary_t}) > 0
\end{equation}
for all $t \in I$.
\item Assume that $R_{1,t}, R_{2,t}$, and $\sizeofboundary_t$ are all constants in $t \in I$. Then $r^*_t$ is a constant in $t \in I$ if $\frac{M_{1,t}}{M_{2,t}}$ is constant in $t \in I$.  
\item Assume $\frac{M_{1,t}}{M_{2,t}}$ is a constant $\alpha$ in $t \in I$ and $\sizeofboundary_t$ is constant in $t \in I$. Then 
$r^*_t$ is constant in $t \in I$ if 
\begin{equation}
    \frac{\frac{d}{dt} (\log R_{1,t})}{\frac{d}{dt} (\log R_{2,t})} = -\alpha
\end{equation}
for all $t \in I$.
\end{enumerate}
\end{theorem}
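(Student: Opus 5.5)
The plan is to reduce everything to the scalar equation from Theorem \ref{Main_Result_1} (2) and apply the implicit function theorem. For each $t\in I$, $r^*_t$ is the unique zero in $[R_{1,t},R_{2,t}]$ of
\begin{equation*}
F(r,t):=\frac{M_{1,t}}{M_{2,t}}\,\log\Bigl(\frac{R_{2,t}}{r+\sizeofboundary_t}\Bigr)-\log\Bigl(\frac{r}{R_{1,t}}\Bigr).
\end{equation*}
This is the identity $x_{1,t}/x_{2,t}=M_{1,t}/M_{2,t}$ after clearing the denominator $x_{2,t}$.

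First we check that $x_{1,t},x_{2,t}>0$. From the equation, $x_{1,t}=\frac{M_{1,t}}{M_{2,t}}x_{2,t}$, so both have the same sign. If $r^*_t=R_{1,t}$, then $x_{1,t}=0$, which forces $x_{2,t}=0$, i.e.\ $r^*_t+\sizeofboundary_t=R_{2,t}$. This contradicts $R_{1,t}+\sizeofboundary_t<R_{2,t}$. Hence $r^*_t>R_{1,t}$ and both quantities are positive. In particular $r^*_t$ lies in the open interval $(R_{1,t},R_{2,t}-\sizeofboundary_t)$, where $F$ is smooth in $r$.

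Next, $F$ is differentiable in $(r,t)$ because the data are differentiable in $t$, and
\begin{equation*}
\partial_r F=-\frac{M_{1,t}}{M_{2,t}}\frac{1}{r+\sizeofboundary_t}-\frac1r<0.
\end{equation*}
The implicit function theorem then gives a differentiable local branch $\tilde r(t)$ with $F(\tilde r(t),t)=0$, and $\tilde r(t)$ stays in the admissible interval for $t$ nearby. By the uniqueness in Theorem \ref{Main_Result_1}, $\tilde r(t)=r^*_t$, so $t\mapsto r^*_t$ is differentiable.

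This is the main subtle step. The implicit function theorem is usually stated for $C^1$ data, and here we only assume differentiability in $t$. I would handle it directly. Strict monotonicity of $F$ in $r$ together with uniqueness gives continuity of $r^*_t$: a limit point of $r^*_s$ as $s\to t$ is a zero of $F(\cdot,t)$, hence equals $r^*_t$. Then subtract $F(r^*_s,s)=0$ from $F(r^*_t,t)=0$. Applying the mean value theorem in $r$ and differentiability in $t$, divide by $s-t$ and pass to the limit.

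To obtain the displayed formula in (1), I would differentiate the form $x_{1,t}/x_{2,t}=M_{1,t}/M_{2,t}$ instead, with
\begin{equation*}
\dot x_{1}=\frac{\dot r^*}{r^*}-\frac{\dot R_1}{R_1},\qquad \dot x_2=\frac{\dot R_2}{R_2}-\frac{\dot r^*+\dot{\sizeofboundary}}{r^*+\sizeofboundary}.
\end{equation*}
Expanding $\dot x_1/x_2-x_1\dot x_2/x_2^2=\frac{d}{dt}(M_1/M_2)$ and collecting the $\dot r^*$ terms gives the stated identity. Its coefficient $\frac{1}{x_2r^*}+\frac{x_1}{x_2^2(r^*+\sizeofboundary)}$ is positive.

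Parts (2) and (3) follow by substituting into (1). In (2), all terms on the right vanish, so $\dot r^*_t=0$ on the interval $I$. In (3), $\dot{\sizeofboundary}=0$, $\frac{d}{dt}(M_1/M_2)=0$ and $x_1/x_2=\alpha$. The right side becomes
\begin{equation*}
\frac{1}{x_2}\left(\frac{\dot R_1}{R_1}+\alpha\frac{\dot R_2}{R_2}\right),
\end{equation*}
which vanishes exactly under the stated hypothesis, read as $\frac{d}{dt}\log R_{1,t}=-\alpha\,\frac{d}{dt}\log R_{2,t}$. So $\dot r^*_t\equiv 0$ and $r^*_t$ is constant on $I$.
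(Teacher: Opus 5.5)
Your proposal is correct and follows essentially the paper's route: differentiate the identity $x_{1,t}/x_{2,t}=M_{1,t}/M_{2,t}$ from Theorem~\ref{Main_Result_1}~(2) to get the formula in (1), then specialize it (using $x_{1,t}/x_{2,t}=\alpha$ in part (3)) to force $\frac{d}{dt}r^*_t=0$. Your extra steps fill in justifications that the paper's proof takes for granted: you show $x_{1,t},x_{2,t}>0$, and you prove that $t\mapsto r^*_t$ is differentiable via continuity from uniqueness plus a mean-value argument, which avoids the $C^1$ requirement of the implicit function theorem.
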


For $n \geq 3$, we have the following result instead.

\begin{theorem}
\label{Main_Result_3}

Assume $n \geq 3$. Consider the one-parameter family of elliptic system \eqref{main_problems_rewritten_one_parameter_family} indexed by a parameter $t \in I$, where $I$ is an open interval in the real line, the function $H_{\sizeofboundary}(w)$ is given by \eqref{H1}, $0 < \sizeofboundary_t < R_{1,t} < R_{2,t}$, $R_{1,t} + \sizeofboundary_t < R_{2,t}$ for all $t \in I$. Take $\Omega_t := \{R_{1,t} < \norm{x} < R_{2,t}\}$ for all $t \in I$.

Let $(u^{\epsilon}_{1,t}, u^{\epsilon}_{2,t}) \in C^2(\Omega_t; \mathbb{R}^2) \cap C^0(\overline{\Omega_t}; \mathbb{R}^2)$ be a solution of \eqref{main_problems_rewritten_one_parameter_family} for any $\epsilon > 0, t \in I$, and suppose that $\{(u^{\epsilon}_{1,t}, u^{\epsilon}_{2,t})\}_{\epsilon > 0}$ converges along a subsequence as $\epsilon \rightarrow 0^+$ to $(u_{1,t}, u_{2,t})$. We also assume that the mapping $I \rightarrow \mathbb{R}$ given by $t \mapsto R_{1,t}$, $t \mapsto R_{2,t}$, $t \mapsto \sizeofboundary_{t}$, $t \mapsto {M}_{1,t}$, $t \mapsto {M}_{2,t}$ are all differentiable on $I$. For any $t \in I$, let $r^* = r^*_{t} \in [R_{1,t}, R_{2,t} - \sizeofboundary_t]$ be the unique number as guaranteed in Theorem \ref{Main_Result_1} (1) such that 
\begin{equation}
\begin{split}
\partial \{u_{1,t} > 0\} \cap \Omega_t = \{\norm{x} = r^*_{t}\},
\\ \partial \{u_{2,t} > 0\} \cap \Omega_t = \{\norm{x} = r^*_{t} + \sizeofboundary_t\}.
\end{split}
\end{equation}
Then we have the followings.
\begin{enumerate}
\item The mapping $I \rightarrow \mathbb{R}$ given by $t \mapsto r^*_t$ is also differentiable on $I$, with 
\begin{equation}
\begin{split}
\left(\frac{dr^*_t}{dt}\right)(n-2)[\frac{1}{x_{2,t}}(r^*_t)^{1-n} + \frac{x_{1,t}}{x_{2,t}^2}(r^*_t + \sizeofboundary_t)^{1-n}] = \frac{d}{dt}\left(\frac{M_{1,t}}{M_{2,t}}\right) 
\\+ (n-2)\frac{1}{x_{2,t}}R^{1-n}_{1,t}\left(\frac{dR_{1,t}}{dt}\right) + (n-2)\frac{x_{1,t}}{x_{2,t}^2}R^{1-n}_{2,t}\left(\frac{dR_{2,t}}{dt}\right) - (n-2)\frac{x_{1,t}}{x_{2,t}^2}(r^*_t + \sizeofboundary_t)^{1-n}\left(\frac{d\sizeofboundary_t}{dt}\right),
\end{split}
\end{equation}
where 
\begin{equation}
    x_{1,t} = R_{1,t}^{2-n} - (r^*_t)^{2-n} > 0, x_{2,t} = (r^*_t + \sizeofboundary_t)^{2-n} - R_{2,t}^{2-n} > 0
\end{equation}
for all $t \in I$.
\item Assume that $R_{1,t}, R_{2,t}$, and $\sizeofboundary_t$ are all constants in $t \in I$. Then $r^*_t$ is a constant in $t \in I$ if $\frac{M_{1,t}}{M_{2,t}}$ is constant in $t \in I$.  
\item Assume $\frac{M_{1,t}}{M_{2,t}}$ is a constant $\alpha$ in $t \in I$ and $\sizeofboundary_t$ is constant in $t \in I$. Then 
$r^*_t$ is constant in $t \in I$ if 
\begin{equation}
    \frac{\frac{d}{dt}(R_{1,t}^{2-n})}{\frac{d}{dt}(R_{2,t}^{2-n})} = -\alpha
\end{equation}
for all $t \in I$.
\end{enumerate}
\end{theorem}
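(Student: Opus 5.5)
We take Theorem \ref{Main_Result_1} (1) and (3) as given. For each $t \in I$, $r^*_t$ is then the unique zero in $[R_{1,t}, R_{2,t}-\sizeofboundary_t]$ of
\begin{equation*}
F(r,t) := \frac{R_{1,t}^{2-n} - r^{2-n}}{(r+\sizeofboundary_t)^{2-n} - R_{2,t}^{2-n}} - \frac{M_{1,t}}{M_{2,t}} .
\end{equation*}
With $x_1(r,t) = R_{1,t}^{2-n} - r^{2-n}$ and $x_2(r,t) = (r+\sizeofboundary_t)^{2-n}-R_{2,t}^{2-n}$, we have $F = x_1/x_2 - M_{1,t}/M_{2,t}$. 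The plan is to apply the implicit function theorem to $F$ at $(r^*_t,t)$, and then read off (2) and (3) from the resulting ODE.

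\textbf{Step 1: positivity and the interior of the interval.} Since $n\ge 3$, the map $s\mapsto s^{2-n}$ is strictly decreasing. Hence $x_1>0$ iff $r>R_1$, and $x_2>0$ iff $r+\sizeofboundary<R_2$. At the endpoints $r=R_1$ and $r=R_2-\sizeofboundary$, the quotient $x_1/x_2$ equals $0$ and $+\infty$ respectively. Because $M_1/M_2\in(0,\infty)$, the root $r^*_t$ must lie strictly inside the interval, which gives $x_{1,t},x_{2,t}>0$.

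\textbf{Step 2: the implicit function theorem.} On the interior, $F$ is differentiable in $(r,t)$, since $R_{i,t}$, $\sizeofboundary_t$, $M_{i,t}$ are differentiable and $M_{2,t}>0$. We compute
\begin{equation*}
\partial_r F = \frac{(n-2)r^{1-n}}{x_2} + \frac{x_1 (n-2)(r+\sizeofboundary)^{1-n}}{x_2^2} > 0 ,
\end{equation*}
so $F(\cdot,t)$ is strictly increasing, which also re-proves uniqueness. The implicit function theorem then gives a locally unique differentiable solution $r(t)$. By uniqueness in Theorem \ref{Main_Result_1}, $r(t)=r^*_t$.

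\emph{Main obstacle.} The standard implicit function theorem assumes $F\in C^1$ jointly, while here we only know the data are differentiable in $t$. I would handle this with the one-variable version. First, the strict monotonicity in $r$ together with continuity gives continuity of $t\mapsto r^*_t$ by a sign-change argument. Next, write $F(r^*_{t+h},t+h)-F(r^*_t,t)=0$ and split it as
\begin{equation*}
\bigl[F(r^*_{t+h},t+h)-F(r^*_{t},t+h)\bigr]+\bigl[F(r^*_t,t+h)-F(r^*_t,t)\bigr]=0 .
\end{equation*}
Apply the mean value theorem in $r$ to the first bracket. Dividing by $h$ and letting $h\to0$, using continuity of $\partial_r F$ in both variables (which holds because its coefficients are continuous) and continuity of $r^*_t$, we obtain $\frac{d}{dt}r^*_t = -\partial_tF/\partial_rF$.

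\textbf{Step 3: the ODE.} Write $\frac{d}{dt}(x_1/x_2) = \dot x_1/x_2 - x_1\dot x_2/x_2^2$. Here
\begin{equation*}
\dot x_1 = -(n-2)R_1^{1-n}\dot R_1 + (n-2)(r^*)^{1-n}\dot r^* ,
\end{equation*}
\begin{equation*}
\dot x_2 = -(n-2)(r^*+\sizeofboundary)^{1-n}(\dot r^*+\dot \sizeofboundary) + (n-2)R_2^{1-n}\dot R_2 .
\end{equation*}
Setting this derivative equal to $\frac{d}{dt}(M_1/M_2)$ and moving the $\dot r^*$ terms to the left gives the identity in (1). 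One should check the sign of the $\dot R_1$ term against the displayed formula while collecting terms.

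\textbf{Step 4: the special cases.} For (2), $\dot R_i=\dot\sizeofboundary=0$ and $\frac{d}{dt}(M_1/M_2)=0$, so the right-hand side of (1) vanishes. The coefficient of $\dot r^*$ is positive, so $\dot r^*=0$ and $r^*_t$ is constant. Alternatively, $r^*_t$ depends only on the fixed data through $F$.

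For (3), take $\dot\sizeofboundary=0$ and $M_1/M_2\equiv\alpha$. Use $x_{1}=\alpha x_{2}$ at $r^*_t$ and $\frac{d}{dt}(R_i^{2-n})=-(n-2)R_i^{1-n}\dot R_i$. Then the right-hand side of (1) becomes a multiple of
\begin{equation*}
\frac{d}{dt}(R_1^{2-n})+\alpha\frac{d}{dt}(R_2^{2-n}),
\end{equation*}
up to the sign convention. This vanishes under the stated ratio condition, so $\dot r^*\equiv0$ on the interval $I$, and hence $r^*_t$ is constant.
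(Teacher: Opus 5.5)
Your proposal is correct and follows the paper's route. You differentiate the identity $x_{1,t}/x_{2,t}=M_{1,t}/M_{2,t}$ in $t$, collect the $\frac{d}{dt}r^*_t$ terms, and specialize for (2) and (3). Your signs do reproduce the displayed formula exactly, and in (3) the right-hand side equals $-x_{2,t}^{-1}\bigl[\frac{d}{dt}(R_{1,t}^{2-n})+\alpha\frac{d}{dt}(R_{2,t}^{2-n})\bigr]$, which vanishes under the ratio condition.

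The one real difference is your Step 2. The paper simply differentiates the identity and tacitly assumes $t\mapsto r^*_t$ is differentiable. Your argument (continuity of $r^*_t$ via sign change, then the mean value theorem in $r$ with $\partial_r F>0$ jointly continuous) proves this under mere differentiability of the data, so it closes a gap the paper leaves open.
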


\section{Preliminaries}
We first recall some known definitions and results in \cite{Bozorgnia}, \cite{CL2}, \cite{ChPaTo26}, \cite{ChPaTo26_2}, and \cite{ChPaTo26_3} for the convenience of the reader. 

Firstly, we have the existence and uniqueness of solutions of \eqref{main_problems}. The existence of solutions was proved in \cite[Thm. 4.1]{CL2} using Perron's method and Schauder fixed point theorem. The similar approach was adopted in \cite[Thm. 1]{ChPaTo26} to extend the result with negative Pucci operator as diffusion. The uniqueness result was proved in \cite[Thm. 3.3]{Bozorgnia} by sub- and sup-solution method. 

\begin{theorem}
\label{existence_and_uniqueness}
\cite[Theorem 3.3]{Bozorgnia}\cite[Theorem 4.1]{CL2}\cite[Theorem 1]{ChPaTo26}

{
Let $\Omega \subset \mathbb{R}^n$ be a bounded Lipschitz domain, and assume \eqref{fiassumption}-\eqref{f_i-f_j_disjointsupport} hold true. Consider the elliptic system \eqref{main_problems} with $H_R(w)(x)$ given by \eqref{H1}. Then for any $\eps > 0$, and $0 < \sizeofboundary \leq 1$, there exists a unique positive solution 
$(u^{\eps}_1,\ldots, u^{\eps}_K) \in  C^{\alpha}(\overline{\Omega};\mathbb{R}^K)\cap C_{loc}^{2,\alpha}(\Omega;\mathbb{R}^K)$ of \eqref{main_problems}, for some $0 < \alpha < 1$.
}
\end{theorem}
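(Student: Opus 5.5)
The plan has two parts: existence by a Schauder fixed point built on Perron-type solvability of decoupled linear problems (as in \cite{CL2}), and uniqueness by a comparison argument exploiting the competitive, monotone structure of the nonlocal coupling (as in \cite{Bozorgnia}).

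\emph{Existence.} Set $M:=\max_i \sup f_i$ and consider the closed convex set
\[
\mathcal{K}=\{(v_1,\dots,v_K)\in C^0(\overline{\Omega}\cup(\partial\Omega)_{\le R};\mathbb{R}^K): 0\le v_i\le M,\ v_i=f_i \text{ on } (\partial\Omega)_{\le R}\}.
\]
Given $v\in\mathcal{K}$, let $T(v)=(w_1,\dots,w_K)$, where $w_i$ solves the linear problem
\[
\Delta w_i=\eps^{-2}\Big(\sum_{j\ne i}H_R(v_j)\Big)w_i \ \text{ in } \Omega, \qquad w_i=f_i \text{ on } \partial\Omega ,
\]
and $w_i=f_i$ on the rest of $(\partial\Omega)_{\le R}$.
\begin{itemize}
\item The potential $c_i=\eps^{-2}\sum_{j\neq i}H_R(v_j)$ is nonnegative, bounded by $K M\eps^{-2}$, and Lipschitz, since averaging over $B_R(x)$ regularizes. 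Perron's method on the Lipschitz domain therefore gives a unique solution $w_i$.
\item The maximum principle gives $0\le w_i\le M$, so $T(\mathcal{K})\subset\mathcal{K}$.
\item Interior Schauder estimates give $C^{2,\alpha}_{loc}$ bounds. A uniform boundary H\"older modulus comes from barriers; this is where the Lipschitz (exterior cone) condition and the H\"older continuity of $f_i$ enter.
\end{itemize}
By Arzel\`a--Ascoli, $T(\mathcal{K})$ is precompact, and $T$ is continuous because $H_R$ is continuous in $C^0$. Schauder's theorem then yields a fixed point. Positivity of each component inside $\Omega$ follows from the strong maximum principle, since $f_i\not\equiv 0$.

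\emph{Uniqueness.} Let $u=(u_i)$ and $\tilde u=(\tilde u_i)$ be two solutions, and consider the largest ratio
\[
\lambda:=\max_i\sup_{\Omega}\frac{\tilde u_i}{u_i}.
\]
Assume for contradiction that $\lambda>1$, and write $\lambda u_i-\tilde u_i\ge 0$.
\begin{itemize}
\item Boundary behaviour: on $\mathrm{supp} f_i$ we have $\lambda f_i-f_i>0$. Near the part of $\partial\Omega$ where $f_i=0$, control the ratio by Hopf-type comparisons; this is where assumption \eqref{size_of_ball_intersect_support} enters.
\item Interior equation: compute
\[
\Delta(\lambda u_i-\tilde u_i)=\eps^{-2}\Big(\lambda u_i\sum_{j\ne i}H_R(u_j)-\tilde u_i\sum_{j\ne i}H_R(\tilde u_j)\Big).
\]
\end{itemize}
The key inequality is the following. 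Using $\tilde u_j\le\lambda u_j$ gives $H_R(\tilde u_j)\le\lambda H_R(u_j)$. Hence, at an interior point where $\lambda u_i-\tilde u_i$ attains a zero minimum (so $\tilde u_i=\lambda u_i$ there),
\[
\Delta(\lambda u_i-\tilde u_i)\ \ge\ \eps^{-2}\lambda u_i\sum_{j\neq i}\big(H_R(u_j)-\lambda H_R(u_j)\big),
\]
which has the wrong sign. So the naive comparison fails, and one must use the competitive structure more carefully.

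The route I would try follows Bozorgnia's sub/supersolution scheme. Order the system by the competitive cone: the pair $(u_i,\ \text{others})$ is ordered with $u_i$ increasing and the others decreasing. The monotone iteration, started from the ordered pair consisting of the harmonic extensions and $0$, produces a maximal solution $\bar u$ and a minimal solution $\underline u$, and every solution lies between them in the competitive order. One then shows $\bar u=\underline u$ by integrating:
\begin{itemize}
\item test $\Delta(\bar u_i-\underline u_i)$ against suitable weights and use the symmetry $\int u\,H_R(v)=\int v\,H_R(u)$ of the averaging operator;
\item the nonlocal cross terms then cancel, leaving a quantity with a strict sign unless the difference vanishes.
\end{itemize}
I expect this last step to be the main obstacle. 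For $K\ge3$ the ordering is not a genuine cone order, so the argument would have to be arranged for the specific system rather than through a general monotone-systems theorem.
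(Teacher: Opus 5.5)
The paper does not prove this statement; it quotes existence from \cite{CL2} and uniqueness from \cite{Bozorgnia}, the latter by a sub/supersolution method. Your existence argument (Perron's method for the decoupled linear problems, then Schauder's fixed point theorem) is the one cited from \cite{CL2}, and it is fine. One small point: strict positivity in $\Omega$ needs $f_i\not\equiv0$ on $\partial\Omega$ itself, not just somewhere in the collar.

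The genuine gap is uniqueness, which your proposal does not establish. You are right to drop the ratio comparison. However, the replacement stops at the decisive step, showing that the extremal objects coincide, and you only describe it as ``cross terms cancel''. The tool you name is not correct as stated. The identity $\int_\Omega u\,H_R(v)=\int_\Omega v\,H_R(u)$ fails for solutions, because $H_R$ evaluated near $\partial\Omega$ reads the collar values $f_j$. It is exact only for functions vanishing outside $\Omega$; otherwise it leaves a collar term, and the sign of that term is exactly what a proof has to use. In addition, for $K\ge3$ you leave even the bracketing step open, on the grounds that there is no cone order.

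Both holes can be closed. First, run the coupled (mixed quasimonotone) iteration, which needs no cone order. Take the harmonic extensions of $f_i$ as upper functions and $0$ in $\Omega$ as lower functions. Let $\bar u^{(k+1)}_i$ and $\underline u^{(k+1)}_i$ solve, with data $f_i$,
\[
\Delta z=\eps^{-2}z\sum_{j\ne i}H_R(\underline u^{(k)}_j)\quad\text{and}\quad\Delta z=\eps^{-2}z\sum_{j\ne i}H_R(\bar u^{(k)}_j).
\]
A larger potential gives a smaller solution. Hence $\underline u^{(k)}$ increases, $\bar u^{(k)}$ decreases, $\underline u^{(k)}\le\bar u^{(k)}$, and every solution stays in between. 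For any $K$, the limits satisfy $\eps^2\Delta\bar u_i=\bar u_i\sum_{j\ne i}H_R(\underline u_j)$ and $\eps^2\Delta\underline u_i=\underline u_i\sum_{j\ne i}H_R(\bar u_j)$.

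Second, set $w_i=\bar u_i-\underline u_i\ge0$, which vanishes off $\Omega$. Its outward flux is nonpositive, so $\int_\Omega\Delta w_i\le0$; on a Lipschitz domain, justify this by testing with $\min(w_i/\delta,1)$. Moreover
\[
\eps^2\Delta w_i=w_i\sum_{j\ne i}H_R(\underline u_j)-\underline u_i\sum_{j\ne i}H_R(w_j),
\]
and by Fubini
\[
\int_\Omega w_i\,H_R(\underline u_j)-\int_\Omega\underline u_j\,H_R(w_i)=\frac{1}{|B_R|}\iint_{x\in\Omega,\,y\notin\Omega,\,|x-y|<R}w_i(x)f_j(y)\,dy\,dx=:C_{ij}\ge0 .
\]
Summing over $i$ and relabeling gives $\eps^2\sum_i\int_\Omega\Delta w_i=\sum_{i\ne j}C_{ij}$. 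The left side is $\le0$ and the right side is $\ge0$, so every $C_{ij}=0$.

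Finally, since $\Delta w_i\le\eps^{-2}\big(\sum_{j\ne i}H_R(\underline u_j)\big)w_i$, the strong maximum principle gives either $w_i\equiv0$ or $w_i>0$ in $\Omega$. The latter contradicts $C_{ij}=0$ for any $j\ne i$, because $f_j\ge0$ is continuous and not identically zero on the collar. Hence $\bar u=\underline u$, which proves uniqueness, and incidentally existence without Schauder.
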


It was also known that there is a subsequential limit of $\{(u^{\epsilon}_1, \cdots, u^{\epsilon}_K)\}_{\epsilon > 0}$ as $\epsilon \rightarrow 0^+$.

\begin{theorem}
\cite[Corollary 5.6]{CL2}\cite[Theorem 2]{ChPaTo26}
\label{main theorem3-old}

{

Let $\Omega \subset \mathbb{R}^n$ be a bounded Lipschitz domain, and assume \eqref{fiassumption}-\eqref{f_i-f_j_disjointsupport} hold true. Consider the elliptic system \eqref{main_problems} with $H_R(w)(x)$ given by \eqref{H1}.
For any $\eps > 0$ and $0<R\leq 1$,  let $(u^{\eps}_1, \ldots, u^{\eps}_K)$ be a solution to \eqref{main_problems}. Then there exists a subsequence of $\{(u^{\eps}_1,\ldots, u^{\eps}_K) \}_{\eps > 0}$
that converges locally uniformly in $\Omega$ to a limit function $(u_1, \ldots, u_K)$ as $\eps \rightarrow 0^+$. Moreover, the limit function $(u_1, \ldots, u_K)$
has the following properties:
\begin{enumerate}
\item Each function $u_i$ is locally Lipschitz continuous on $\Omega$.
\item  $\Delta u_i = 0$ on $\{u_i > 0\}\cap\Omega$,  for any $i=1,\ldots, K$.
\item  For any $1 \leq i < j \leq K$, the supports of the function $u_i$ and $u_j$ are at distance at least $\sizeofboundary$ from each other. 
\end{enumerate}
}
\end{theorem}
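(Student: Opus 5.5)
The statement is quoted from \cite[Corollary 5.6]{CL2} and \cite[Theorem 2]{ChPaTo26}. To prove it directly, I would follow the standard segregation scheme: uniform bounds, then compactness, then identifying the limit. The steps below are in the order I would carry them out.

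\textbf{Uniform bounds and $L^1$ control of the interaction.} Since $u^\eps_i\ge 0$ and $H_R(u^\eps_j)\ge 0$, each $u^\eps_i$ is subharmonic. The maximum principle then gives $0\le u^\eps_i\le \max f_i$, uniformly in $\eps$. Next, for a cutoff $\varphi\in C^\infty_c(\Omega)$, integration by parts gives
\[
\int_\Omega \frac{1}{\eps^2}u^\eps_i\sum_{j\ne i}H_R(u^\eps_j)\,\varphi=\int_\Omega u^\eps_i\Delta\varphi\le C(\varphi).
\]
So the measures $\Delta u^\eps_i$ are locally uniformly bounded in mass. This already yields $W^{1,p}_{loc}$ bounds for $p<n/(n-1)$, hence $L^1_{loc}$ compactness. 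It also shows that, in any limit, $u_i\,H_R(u_j)=0$ a.e.

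\textbf{Equicontinuity and Lipschitz bounds.} Upgrading to local uniform convergence requires a uniform modulus of continuity, and I expect this to be the main obstacle, since monotonicity formulas are unavailable. I would first use the exponential-decay barrier. On a ball $B_\rho(x_0)$ where $H_R(u^\eps_j)\ge\delta>0$, the function $u^\eps_i$ is a subsolution of $\Delta v=\frac{\delta}{\eps^2}v$. Comparing with $\cosh$-type radial supersolutions gives
\[
u^\eps_i\le C e^{-c\sqrt\delta\,\rho/\eps}\quad\text{on } B_{\rho/2}(x_0).
\]
Away from such regions the right-hand side is small, and $u^\eps_i$ is close to harmonic. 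Combining the two, together with interior gradient estimates for $\Delta u=g$ with $g$ bounded in $L^\infty$ where $u^\eps_i$ is not exponentially small, should give a uniform local Lipschitz bound. Arzel\`a--Ascoli then yields a locally uniformly convergent subsequence, with a locally Lipschitz limit; this is item (1).

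\textbf{Separation and harmonicity.} For item (3), suppose $u_i(x)>0$ and $u_j(y)>0$ with $|x-y|<R$. By continuity $u_j>0$ on a neighborhood of $y$, so $H_R(u_j)>0$ near $x$, while $u_i>0$ near $x$. This contradicts $u_iH_R(u_j)=0$, so the supports are at distance at least $R$.

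For item (2), fix a compact set $K\subset\{u_i>0\}$. There $u^\eps_i\ge\delta$ for small $\eps$, so each $u^\eps_j$ with $j\ne i$ sits in a region where $H_R(u^\eps_i)\ge c\delta$ on an $R$-neighborhood of $K$. The barrier above makes $u^\eps_j$ exponentially small there. Hence $\frac{1}{\eps^2}H_R(u^\eps_j)\to 0$ uniformly on $K$, so $\Delta u^\eps_i\to 0$ uniformly on $K$, and the uniform limit satisfies $\Delta u_i=0$ on $\{u_i>0\}$.
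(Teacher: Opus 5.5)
The paper does not prove this statement; it quotes it from \cite[Corollary 5.6]{CL2} and \cite[Theorem 2]{ChPaTo26}. Your soft steps are fine. These are $0\le u_i^\eps\le\max f_i$, the local mass bound on $\Delta u_i^\eps$, $u_iH_R(u_j)=0$ a.e., and your derivations of (2) and (3) \emph{given} local uniform convergence to a continuous limit. The genuine gap is the step you yourself call the main obstacle: a modulus of continuity for $u_i^\eps$ independent of $\eps$. You do not prove it, and the mechanism you sketch fails.

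Near $\partial\{u_i>0\}$ neither alternative of your dichotomy holds. First, $H_R(u_j^\eps)$ is not bounded below by a fixed $\delta$ there, since its limit vanishes on $\partial\{u_i>0\}$. Second, where $u_i^\eps$ is not exponentially small, the right-hand side $\eps^{-2}u_i^\eps\sum_{j\neq i}H_R(u_j^\eps)$ is \emph{not} bounded uniformly in $\eps$. For instance, take the annular configuration of Theorem~\ref{Main_Result_1}:
\begin{itemize}
\item $u_1$ is a nonconstant radial harmonic function on $\{R_1<|x|<r^*\}$ that vanishes on $\{|x|=r^*\}$, and $u_1\equiv 0$ on $\{r^*<|x|<R_2\}$;
\item so $\Delta u_1$ is a nonzero multiple of surface measure on the sphere $\{|x|=r^*\}$;
\item a uniform $L^\infty$ bound on $\Delta u_1^\eps$ near that sphere would pass to the weak-$*$ limit and give $\Delta u_1\in L^\infty$, a contradiction.
\end{itemize}
So interior gradient estimates for $\Delta u=g$ with $g\in L^\infty$ have no $\eps$-independent constants, and this happens exactly in the layer where equicontinuity is at stake.

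Letting $\delta$ depend on $\eps$ does not help. $H_R(u_j^\eps)$ is Lipschitz in $x$ with constant $\lesssim\|u_j^\eps\|_\infty/R$. Hence your barrier forces $u_i^\eps(x_0)\lesssim\eps^2$ only where $H_R(u_j^\eps)(x_0)\gtrsim(\eps|\log\eps|)^{2/3}$. That leaves a layer in which the right-hand side is controlled only by about $\eps^{-4/3}$. Nor does the $L^1$ mass bound suffice: bounded functions whose Laplacians have uniformly bounded mass need not be equicontinuous.

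Consequently several steps rest on an estimate you have not supplied: Arzel\`a--Ascoli, item (1), the lower bound $u_i^\eps\ge\delta$ on compact subsets of $\{u_i>0\}$ used in (2), and the continuity used in (3). What is needed is a genuine uniform (Lipschitz or H\"older) bound across the interaction layer. That is the substantive content of the estimates in \cite{CL2} leading to Corollary~5.6. Subharmonicity does give the upper half for free once the $L^1$ limit is known to be continuous, since $u_i^\eps(x)\le\fint_{B_r(x)}u_i^\eps$. The lower half and the continuity of the limit are the real issue.

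There are also two smaller, fixable points. First, $\cosh(\kappa|x-x_0|)$ is a \emph{sub}solution of $\Delta v=\kappa^2 v$ for $n\ge 2$, so your upper barrier should be the exact radial (modified Bessel) solution. Second, in (2) the ball $B_R(x)$ may enter $(\partial\Omega)_{\le R}$, where $u_j^\eps=f_j$. You must first show $f_j\equiv 0$ there and then run the barrier in $B_\rho(y_0)\cap\Omega$.
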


\begin{definition}

{\rm The sets $\partial \{u_i > 0\} \cap \Omega, i = 1, \cdots, K$ are called the free boundaries.}
\end{definition}

In addition, some preliminary properties of the free boundaries were known.

\begin{theorem}
\cite[Corollary 6.2, 6.5]{CL2}\cite[Theorem 7.1]{CL2}\cite[Theorem 3, 4]{ChPaTo26}
\label{main theorem5-old}

{
If $x_0 \in \partial \{ u_i > 0\}\cap\Omega$ for some $i = 1, \ldots, K$, then there is an exterior tangent ball $B_R(y)$ at $x_0$, and the set $\{u_i > 0\}\cap\Omega$ has finite perimeter. Furthermore, if $x_0 \in \partial \{u_i > 0\} \cap \Omega$, there exists $j \neq i$ such that $\overline{B_{\sizeofboundary}(x_0)} \cap \partial \{u_j > 0\} \neq \emptyset$.  
}
\end{theorem}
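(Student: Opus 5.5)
We would argue in the reverse order of the statement: first the existence of the nearby free boundary point of another species, then the exterior tangent ball, and finally finite perimeter. Throughout we take as given the limit properties of Theorem \ref{main theorem3-old}: the $u_i$ are Lipschitz, harmonic in $\{u_i>0\}\cap\Omega$, and their supports are mutually at distance at least $\sizeofboundary$.

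\textbf{Step 1 (another species within distance $\sizeofboundary$).} Fix $x_0\in\partial\{u_i>0\}\cap\Omega$ and suppose, by contradiction, that $\overline{B_{\sizeofboundary}(x_0)}\cap \text{supp}\, u_j=\emptyset$ for every $j\neq i$. By compactness there is $\delta>0$ with $B_{\sizeofboundary+2\delta}(x_0)\cap\text{supp}\,u_j=\emptyset$ for all $j \neq i$. The aim is to show that $u_i$ is harmonic in $B_\delta(x_0)$.
\begin{itemize}
\item Each $u_j^\eps$, $j\neq i$, is subharmonic. On the compact set $\overline{B_{\sizeofboundary+\delta}(x_0)}$ it converges uniformly to $0$.
\item We upgrade this to exponential smallness. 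The function $u_j^\eps$ satisfies $\Delta u_j^\eps = \eps^{-2}u_j^\eps\sum_{k\neq j}H_R(u_k^\eps)$ there. Where $u_i$ is positive, $H_R(u_i^\eps)$ is bounded below, so a standard barrier comparison (e.g.\ with $\cosh(c|x-y|/\eps)$-type supersolutions) gives $u_j^\eps\le Ce^{-c/\eps}$ on a slightly smaller region.
\item Where $u_i=0$ near $x_0$ we cannot use this barrier directly. Instead we use the bound $\Delta u_i^\eps \le \eps^{-2}\|u_i^\eps\|_\infty \sum_{j\neq i} H_R(u_j^\eps)$ on $B_\delta(x_0)$. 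It then suffices to show $\sum_{j\neq i}H_R(u_j^\eps)=o(\eps^2)$ there, i.e.\ that the total mass of $u_j^\eps$ on $B_{\sizeofboundary+\delta}(x_0)$ is $o(\eps^2)$.
\end{itemize}
This is the step we expect to be the main obstacle. Our first attempt would be an integrated version of the barrier argument: integrate the equation of $u_j^\eps$ against a cutoff, using that $\sum_{k\neq j}H_R(u_k^\eps)\ge c>0$ wherever $u_j$ has limit zero but is adjacent to positive mass of some other species.

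Granting this, $\Delta u_i^\eps\to 0$ uniformly on $B_\delta(x_0)$, so $u_i$ is harmonic and nonnegative there. Since $x_0\in\partial\{u_i>0\}$, continuity gives $u_i(x_0)=0$, and the strong maximum principle then forces $u_i\equiv0$ near $x_0$. This contradicts $x_0\in\partial\{u_i>0\}$.

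\textbf{Step 2 (exterior tangent ball).} By Step 1 there are $j\neq i$ and a point $z\in \text{supp}\,u_j$ with $|z-x_0|\le \sizeofboundary$. Since $\text{dist}(\text{supp}\,u_i,\text{supp}\,u_j)\ge\sizeofboundary$ and $x_0\in\text{supp}\,u_i$, we get $|z-x_0|=\sizeofboundary$. Hence $z\in\partial\{u_j>0\}$, which proves the last assertion of the theorem. Moreover $B_{\sizeofboundary}(z)\cap\{u_i>0\}=\emptyset$ while $x_0\in\partial B_{\sizeofboundary}(z)$, so $B_{\sizeofboundary}(z)$ is an exterior tangent ball at $x_0$, of uniform radius $\sizeofboundary$.

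\textbf{Step 3 (finite perimeter).} We use the uniform exterior ball condition. Locally, the complement of $\{u_i>0\}$ contains a union of balls of fixed radius $\sizeofboundary$ touching every boundary point, so near each point the set $\{u_i>0\}$ is the sublevel set of a semiconcave function (the distance to those centres). A covering argument, or the known result that sets with a uniform exterior ball condition have locally finite $\mathcal H^{n-1}$-measure boundary, then gives $\mathcal H^{n-1}(\partial\{u_i>0\}\cap K)<\infty$ for compact $K\subset\Omega$. Therefore $\{u_i>0\}\cap\Omega$ has (locally) finite perimeter.
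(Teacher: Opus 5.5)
The paper gives no proof of this statement; it is imported from \cite{CL2} and \cite{ChPaTo26}. Given Step 1, your Steps 2 and 3 are sound. In Step 3, what you actually use is $\partial\{u_i>0\}\cap\Omega\subset\{d_Z=R\}$, where $Z$ is the closed set of centres $z$ with $B_R(z)\cap\{u_i>0\}=\emptyset$, together with local finiteness of $\mathcal H^{n-1}$ on level sets of distance functions; $\{u_i>0\}$ itself need not be a level set of $d_Z$.

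The genuine gap is Step 1, which you only grant, and the mechanism you sketch cannot supply it. Having replaced $u_i^\eps(x)$ by $\|u_i^\eps\|_\infty$, you need $\int_{B_{R+\delta}(x_0)}u_j^\eps=o(\eps^2)$. A barrier makes $u_j^\eps(y)$ small only where $\sum_{k\neq j}H_R(u_k^\eps)(y)\ge c>0$. This holds on compact subsets of $B_R(x_0)$, because there $B_R(y)$ contains part of $\{u_i>0\}$ near $x_0$. On the shell $R\le|y-x_0|<R+\delta$, however, nothing in your hypotheses places mass of any species other than $j$ in $B_R(y)$. There you only know $u_j^\eps=o(1)$ from uniform convergence, not $o(\eps^2)$, and the needed lower bound is information of the same kind Step 1 is supposed to produce. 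Your cutoff idea is the right ingredient, but integrating the equation of $u_j^\eps$ against a cutoff controls $\eps^{-2}\int\psi\,u_j^\eps H_R(u_i^\eps)$, not $\eps^{-2}\int u_j^\eps$.

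What converts the former into what you need is the symmetry of the kernel $\mathbf 1_{\{|x-y|<R\}}$ (Fubini), provided you keep the factor $u_i^\eps(x)$ instead of bounding it by its sup norm. Take $\eta<\delta$ small and $\psi\in C_c^\infty(B_{R+2\delta}(x_0))$ with $0\le\psi\le1$ and $\psi=1$ on $B_{R+\eta}(x_0)$. Using $\eps^{-2}u_j^\eps H_R(u_i^\eps)\le\Delta u_j^\eps$,
\[
\begin{aligned}
\frac{1}{\eps^2}\int_{B_\eta(x_0)}u_i^\eps\, H_R(u_j^\eps)\,dx
&=\frac{1}{\eps^2|B_R|}\iint_{\{x\in B_\eta(x_0),\ |x-y|<R\}}u_i^\eps(x)\,u_j^\eps(y)\,dy\,dx\\
&\le\frac{1}{\eps^2}\int_{B_{R+\eta}(x_0)}u_j^\eps\, H_R(u_i^\eps)\,dy
\le\int\psi\,\Delta u_j^\eps\,dy=\int u_j^\eps\,\Delta\psi\,dy .
\end{aligned}
\]
The right-hand side tends to $\int u_j\Delta\psi=0$, because $u_j\equiv0$ on $B_{R+2\delta}(x_0)$. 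If this ball leaves $\Omega$, extend $u_j^\eps$ by $0$; this is consistent with $f_j=0$ there and keeps $u_j^\eps$ subharmonic.

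Summing over $j\neq i$ gives $\Delta u_i^\eps(B_\eta(x_0))\to0$, hence $\Delta u_i=0$ in $B_\eta(x_0)$. Your strong-minimum-principle contradiction then goes through with no decay estimates at all. This pairing of $\Delta u_i$ with $\Delta u_j$ at distance $R$ is the same mechanism behind the free boundary condition of Theorem~\ref{The_Free_Boundary_Condition} from \cite{CL2}.
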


We recall the definitions of regular and singular free boundary points as follows.

\begin{definition}
\label{basic_definition}
\cite{CL2}\cite[Definition 2.1, 2.2]{ChPaTo26_2}

{\rm For each $i = 1, \cdots, K$, we define $S_i:= \{x \in \Omega : u_i (x) > 0 \}$, and $C_i := \cup_{j \neq i} S_j$.
A free boundary point $x_0 \in \partial S_i\cap \Omega$ is said to be {\bf regular} if there exists a unique $x_1 \in \partial C_i \cap \Omega$ for which $d(x_0,x_1) = \sizeofboundary.$ A free boundary point is said to be {\bf singular} if it is not regular. }
\end{definition}

Hence, $x_0 \in \partial S_i \cap \Omega$ is singular if and only if there are two distinct points $x_1, x_2 \in \partial C_i \cap \Omega, x_1 \neq x_2$ such that $d(x_0,x_1) = d(x_0,x_2) = \sizeofboundary.$

Another result we need is that solutions of the elliptic system \eqref{main_problems} are  invariant under orthogonal transformations.

Let $U$ be a bounded Lipschitz domain in $\mathbb{R}^n$. For $A \in O(n;\mathbb{R})$, we let $A(U) := \{Ax: x \in U\}$. (Here $O(n;\mathbb{R})$ denotes the group of all $n \times n$ real orthogonal matrices).

\begin{lemma}
\label{orthogonal_transformation}
\cite[Lemma 3.2]{ChPaTo26_3}

{

Let $U \subset \mathbb{R}^n$ be a bounded Lipschitz domain, and let $f_1, f_2,\cdots, f_K : (\partial U)_{\leq R} \rightarrow \mathbb{R}$ satisfy \eqref{fiassumption}-\eqref{f_i-f_j_disjointsupport}. Suppose that $(u^{\epsilon}_1,u^{\epsilon}_2,\cdots,u^{\epsilon}_K) \in C^2(U) \cap C^0(\overline{U})$ is a solution of 
\begin{equation}
\begin{cases} \Delta u^{\varepsilon}_{i}= \frac{1}{\eps^2} u^{\eps}_i  \sum_{j \neq i}  H_R(u^{\eps}_j)(x)\quad & \text{ in } U,\\
        u^{\eps}_i= f_i & \text{ on } (\partial U)_{\leq {\sizeofboundary}}, \\
        
        u^{\eps}_i \geq 0 & \text{ in } U \cup (\partial U)_{\leq \sizeofboundary},
       \end{cases}
\end{equation}
with $H_{\sizeofboundary}(w)(x)$ given by \eqref{H1}.
Let $A \in O(n;\mathbb{R})$ be such that $A(U) = U$. Take $w_i^{\epsilon} := u^{\epsilon}_i(Ax)$ for all $\epsilon > 0, i = 1,\cdots,K$, and $x \in \Omega$. Then  
\begin{equation}
\begin{cases} \Delta w^{\varepsilon}_{i} = \frac{1}{\eps^2} w^{\eps}_i  \sum_{j \neq i}  H_R(w^{\eps}_j)(x)\quad & \text{ in } U,\\
w^{\eps}_i = f_i \circ A & \text{ on } (\partial U)_{\leq {\sizeofboundary}}, \\
w^{\eps}_i \geq 0 & \text{ in } U \cup (\partial U)_{\leq \sizeofboundary},
       \end{cases}
\end{equation}
}
\end{lemma}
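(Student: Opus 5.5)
This is a direct change-of-variables computation that uses the orthogonal invariance of the Laplacian and of the ball averages $H_R$. Fix $A\in O(n;\mathbb{R})$ with $A(U)=U$, and set $w_i^\eps(x)=u_i^\eps(Ax)$. The plan is to verify the three lines of the system one by one: the equation in $U$, the boundary condition on $(\partial U)_{\leq R}$, and nonnegativity.

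For the equation, first note that $A$ is an isometry of $\mathbb{R}^n$ with $A(U)=U$. Hence $x\in U$ if and only if $Ax\in U$, and $w_i^\eps\in C^2(U)\cap C^0(\overline U)$. Next, the chain rule gives $D^2 w_i^\eps(x)=A^T D^2u_i^\eps(Ax)A$. Taking traces and using $AA^T=I$,
\[
\Delta w_i^\eps(x)=\operatorname{tr}\bigl(A^TD^2u_i^\eps(Ax)A\bigr)=\Delta u_i^\eps(Ax).
\]
Then we show $H_R(w_j^\eps)(x)=H_R(u_j^\eps)(Ax)$. Substitute $z=Ay$ in $\fint_{B_R(x)}u_j^\eps(Ay)\,dy$. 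The Jacobian satisfies $|\det A|=1$, and $A(B_R(x))=B_R(Ax)$ because $A$ preserves distances. Note that $u_j^\eps$ is defined on $U\cup(\partial U)_{\leq R}$, which contains $B_R(x)$ for $x\in U$, and this set is $A$-invariant, so the substitution is legitimate. Combining these facts with the equation for $u_i^\eps$ evaluated at $Ax\in U$ gives
\[
\Delta w_i^\eps(x)=\frac1{\eps^2}u_i^\eps(Ax)\sum_{j\neq i}H_R(u_j^\eps)(Ax)=\frac1{\eps^2}w_i^\eps(x)\sum_{j\neq i}H_R(w_j^\eps)(x).
\]

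For the boundary condition, we check that $A$ maps $(\partial U)_{\leq R}$ onto itself. Since $A(U)=U$ and $A$ is a homeomorphism, $A(\partial U)=\partial U$ and $A(U^c)=U^c$. Since $A$ is an isometry, $d(Ax,\partial U)=d(Ax,A\partial U)=d(x,\partial U)$. Therefore, for $x\in(\partial U)_{\leq R}$ we have $Ax\in(\partial U)_{\leq R}$, and so $w_i^\eps(x)=u_i^\eps(Ax)=f_i(Ax)=(f_i\circ A)(x)$. Nonnegativity is immediate from the same invariance of $U\cup(\partial U)_{\leq R}$.

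There is no real obstacle. The only point requiring care is the averaging step: the ball $B_R(x)$ may leave $U$, so one must use that the whole extended domain $U\cup(\partial U)_{\leq R}$, on which the $u_j^\eps$ are defined, is $A$-invariant. If one additionally wants $f_i\circ A$ to satisfy \eqref{fiassumption}--\eqref{f_i-f_j_disjointsupport}, this follows because isometries preserve Hölder continuity, supports up to $A$, ball measures, and distances between supports.
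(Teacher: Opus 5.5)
The paper gives no proof of this lemma; it is quoted from \cite[Lemma 3.2]{ChPaTo26_3}, so there is no in-paper argument to compare against. Your change-of-variables verification is correct and is the standard argument. It rests on three facts. First, trace invariance under conjugation gives $\Delta w_i^\eps(x)=\Delta u_i^\eps(Ax)$. Second, the substitution $z=Ay$, with $|\det A|=1$ and $A(B_R(x))=B_R(Ax)$, gives $H_R(w_j^\eps)(x)=H_R(u_j^\eps)(Ax)$. Third, since $A$ is an isometry with $A(\partial U)=\partial U$ and $A(U^c)=U^c$, it satisfies $A\bigl((\partial U)_{\leq R}\bigr)=(\partial U)_{\leq R}$.

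You assert one inclusion without justification: $B_R(x)\subset U\cup(\partial U)_{\leq R}$ for $x\in U$. It holds because any $y\in B_R(x)\setminus U$ has a point of $\partial U$ on the segment $[x,y]$, so $d(y,\partial U)<R$.
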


In this paper, we also need the following free boundary condition obtained in \cite{CL2} in the analysis of the free boundary later. We state the result here for the convenience of the reader.

\begin{theorem}
\label{The_Free_Boundary_Condition}
\cite[Theorem 9.2]{CL2}

{
In any dimension, assume that we have two populations, $H$ is defined as in \eqref{H1}, $0 \in \partial \{u_1 > 0\} \cap \Omega, \sizeofboundary e_n \in \partial \{u_2 > 0\} \cap \Omega$, and $\partial \{u_1 > 0\} \cap \Omega$ and $\partial \{u_2 > 0\} \cap \Omega$ are of class $C^2$ in a neighborhood of $0$ and $\sizeofboundary e_n$, respectively. Let $\chi_i(0)$ denote the principal curvatures of $\partial \{u_1 > 0\} \cap \Omega$ at $0$ where the outward is the positive direction, and let $\chi_i(\sizeofboundary e_n)$ denote the principal curvatures of $\partial \{u_2 > 0\} \cap \Omega$ at $0$ where the inward is the positive direction. Assume that $\chi_i(0) < \frac{1}{\sizeofboundary}$ and $\chi_{i}(\sizeofboundary e_n) < \frac{1}{\sizeofboundary}$.  Then we have the following relation between the normal derivatives of $u_1$ and $u_2$:
\begin{equation}
\label{The_First_Free_Boundary_Condition}
\frac{u^1_{\nu}(0)}{u^2_{\nu}(\sizeofboundary e_n)} = \prod_{i = 1, \chi_i(0) \neq 0}^{n-1}\frac{\chi_i(0)}{\chi_i(\sizeofboundary e_n)}, \textnormal{ if } \chi_i(0) \neq 0 \textnormal{ for some } i = 1, \cdots n-1, 
\end{equation}
\begin{equation}
\frac{u^1_{\nu}(0)}{u^2_{\nu}(\sizeofboundary e_n)} = 1 \textnormal{ if } \chi_i(0) = 0 \textnormal{ for all } i = 1, \cdots n-1    
\end{equation}
}
\end{theorem}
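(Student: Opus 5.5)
The plan is to derive the condition as a mass balance. The two limiting Laplacians $\Delta u_1$ and $\Delta u_2$ are nonnegative measures carried by the free boundaries. The symmetric structure of the interaction term $u_1 H_{\sizeofboundary}(u_2)$ versus $u_2 H_{\sizeofboundary}(u_1)$ should force one measure to be the push-forward of the other along the normal map $T(x) = x + \sizeofboundary\,\nu(x)$. Comparing surface elements of the two parallel $C^2$ surfaces then produces the curvature product.

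\textbf{Step 1 (symmetric pairing at level $\eps$).} For test functions $\varphi,\psi \ge 0$ supported near $0$ and near $\sizeofboundary e_n$ respectively, I would write
\begin{equation*}
\int \varphi\,\Delta u_1^\eps = \frac{1}{\eps^2|B_{\sizeofboundary}|}\iint_{|x-y|<\sizeofboundary} \varphi(x)\,u_1^\eps(x)u_2^\eps(y)\,dy\,dx ,
\end{equation*}
and similarly
\begin{equation*}
\int \psi\,\Delta u_2^\eps = \frac{1}{\eps^2|B_{\sizeofboundary}|}\iint_{|x-y|<\sizeofboundary} \psi(y)\,u_1^\eps(x)u_2^\eps(y)\,dx\,dy .
\end{equation*}
Thus both are marginals of one interaction measure $\Gamma^\eps(dx,dy) = \eps^{-2}|B_{\sizeofboundary}|^{-1}u_1^\eps(x)u_2^\eps(y)\chi_{\{|x-y|<\sizeofboundary\}}\,dx\,dy$ on $\R^n\times\R^n$. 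These integrals are locally bounded uniformly in $\eps$ (from the Lipschitz bounds of Theorem \ref{main theorem3-old}), so along a subsequence $\Gamma^\eps \rightharpoonup \Gamma$. Passing to the limit gives $\Delta u_1 = (\pi_1)_\#\Gamma$ and $\Delta u_2 = (\pi_2)_\#\Gamma$ locally.

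\textbf{Step 2 (support of $\Gamma$).} Since $\Gamma^\eps$ vanishes unless $u_1^\eps(x)u_2^\eps(y)$ is non-negligible with $|x-y|<\sizeofboundary$, the limit $\Gamma$ is supported on pairs $(x,y)$ with $x\in\partial\{u_1>0\}$, $y\in\partial\{u_2>0\}$ and $|x-y| = \sizeofboundary$. This uses the distance-$\sizeofboundary$ separation of supports in Theorem \ref{main theorem3-old}. Near $0$ the free boundaries are $C^2$ and have exterior tangent balls of radius $\sizeofboundary$ (Theorem \ref{main theorem5-old}). The assumption $\chi_i < 1/\sizeofboundary$ then makes every point regular there, with the unique partner $y = T(x)$, and makes $T$ a local $C^1$ diffeomorphism from $\partial\{u_1>0\}$ onto $\partial\{u_2>0\}$ near $\sizeofboundary e_n$. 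Hence $\Gamma$ lives on the graph of $T$, and we obtain $\Delta u_2 = T_\#(\Delta u_1)$.

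\textbf{Step 3 (identify the densities and compute the Jacobian).} Since $u_i$ is harmonic in its positivity set and, by Hopf-type/boundary regularity at $C^2$ boundaries, $C^1$ up to it, we have $\Delta u_1 = u^1_\nu\,d\mathcal H^{n-1}\lfloor\partial\{u_1>0\}$, and likewise for $u_2$. The push-forward identity then reads
\begin{equation*}
u^1_\nu(0) = u^2_\nu(\sizeofboundary e_n)\,J_T(0),
\end{equation*}
where $J_T$ is the tangential Jacobian of $T$. For parallel surfaces at distance $\sizeofboundary$, $J_T(0) = \prod_i (1-\sizeofboundary\chi_i(0))$, with the sign convention of the statement. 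The principal curvatures of the parallel surface, measured with inward orientation, satisfy $\chi_i(\sizeofboundary e_n) = \chi_i(0)/(1-\sizeofboundary\chi_i(0))$. Therefore each factor equals $\chi_i(0)/\chi_i(\sizeofboundary e_n)$ when $\chi_i(0)\ne0$, and equals $1$ when $\chi_i(0)=0$. This gives both formulas.

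\textbf{Main obstacle.} I expect the hardest step to be Step 2, namely showing rigorously that the limit interaction measure concentrates exactly on the graph of $T$ and that no mass escapes. This requires quantitative decay of $u_1^\eps$ and $u_2^\eps$ outside the limit supports at scale $\eps$, together with control near the boundary of the test-function supports. I would first try to prove it by localizing with barriers built from the exterior tangent balls, which should show $\Gamma^\eps$ is negligible away from $\{|x-y|\ge \sizeofboundary-o(1)\}$. Uniqueness of the partner at regular points would then force weak convergence onto the graph.
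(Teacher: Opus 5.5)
The paper gives no proof of this statement. It is quoted from \cite[Theorem 9.2]{CL2} and only applied, in the radial setting, to compute $r^*$ in Section 3, so your proposal has to stand on its own. It does. You realize $\Delta u_1^\eps$ and $\Delta u_2^\eps$ as the two marginals of the single measure $\Gamma^\eps$, and identify the limit $\Gamma$ with the graph measure of $T(x)=x+R\nu(x)$. You then convert $\Delta u_2=T_\#\Delta u_1$ into $u^1_\nu(0)=u^2_\nu(Re_n)\,J_T(0)$, with $J_T(0)=\prod_i(1-R\chi_i(0))$ and $1-R\chi_i(0)=\chi_i(0)/\chi_i(Re_n)$. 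That is a valid argument. Your convention measures both curvatures with respect to the same direction $e_n$, pointing from $\{u_1>0\}$ towards $\{u_2>0\}$. This is the literal reading of the statement, and a simultaneous sign flip leaves the ratio unchanged.

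What you call the main obstacle is actually soft; no barriers or $\eps$-scale decay estimates are needed.
\begin{enumerate}
\item Each $\Gamma^\eps$ is carried by the closed set $\{|x-y|\le R\}$, hence so is $\Gamma$. This keeps $y$ in a compact set whenever $x$ is, so no mass escapes and both marginal identities pass to the limit against continuous test functions.
\item Locally, $\mathrm{supp}\,\Gamma\subset\mathrm{supp}\,\Delta u_1\times\mathrm{supp}\,\Delta u_2\subset\partial\{u_1>0\}\times\partial\{u_2>0\}$, because $u_i$ is harmonic in $\{u_i>0\}$ and vanishes on the interior of its zero set.
\item Item (3) of Theorem \ref{main theorem3-old} gives $|x-y|\ge R$ for such pairs, so $|x-y|=R$ on $\mathrm{supp}\,\Gamma$.
\item For such a pair, the open ball $B_R(y)$ misses $\{u_1>0\}$ and touches its boundary at $x$. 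Mere $C^1$ regularity at $x$ then forces $y=T(x)$; the curvature bound only serves to guarantee $J_T>0$.
\end{enumerate}

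You should also state the symmetric fact explicitly. $C^1$ regularity of $\partial\{u_2>0\}$ near $Re_n$ forces $x=y-R\nu(y)$, and this point lies near $0$ when $y$ is near $Re_n$ because $\nu(Re_n)=e_n$. This guarantees that all of $\Gamma$ above a neighbourhood $V$ of $Re_n$ sits over a neighbourhood $U$ of $0$. Hence $\Delta u_2$ restricted to $V$ equals, and does not merely dominate, $T_\#$ of $\Delta u_1$ restricted to $U$. Disintegrating $\Gamma$ with respect to $\Delta u_1$ then gives fibres $\delta_{T(x)}$.

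Three smaller points. The uniform local mass bound for $\Delta u_i^\eps$ comes from $0\le u_i^\eps\le\sup f_i$ together with $\int\varphi\,\Delta u_i^\eps=\int u_i^\eps\,\Delta\varphi$. It does not come from Theorem \ref{main theorem3-old}, which concerns only the limit. Invoke Hopf's lemma explicitly to get $u^2_\nu(Re_n)\neq 0$, so the ratio is defined. Note that parallel surfaces share principal directions, so the factors $\chi_i(0)$ and $\chi_i(Re_n)$ in the product are correctly paired.
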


\section{The Analysis of The Radius}

The section is dedicated to the proof of Theorem \ref{Main_Result_1}. Throughout this section, we consider the problem \eqref{main_problems_rewritten}, where $\Omega = \{x \in \mathbb{R}^n: R_1 < \norm{x} < R_2\}$, $M_1 > 0$, $M_2 > 0$, $0 < \sizeofboundary < R_1 < R_2$, $R_1 + \sizeofboundary < R_2$, and the function $H_{\sizeofboundary}(w)$ is given by \eqref{H1}.

We first make use of the uniqueness of the solution as asserted in Theorem \ref{existence_and_uniqueness} to show that under the conditions of Theorem \ref{Main_Result_1}, the free boundary consists of spheres.

\begin{lemma}
\label{Existence_of_r^*}

{
Under the above conditions, there is a $r^* \in [R_1, R_2 -\sizeofboundary]$ such that 
\begin{equation}
\begin{split}
\partial \{u_1 > 0\} \cap \Omega = \{x \in \mathbb{R}^n: \norm{x} = r^*\}, \\
\partial \{u_2 > 0\} \cap \Omega = \{x \in \mathbb{R}^n: \norm{x} = r^* + \sizeofboundary\}.
\end{split}
\end{equation}
Moreover, all the free boundary points are regular.
}
\end{lemma}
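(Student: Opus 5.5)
The plan is to use the uniqueness in Theorem \ref{existence_and_uniqueness} together with Lemma \ref{orthogonal_transformation} to show that everything is radial, and then to read off the free boundaries from the support separation.

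\textbf{Step 1: radial symmetry.} Fix $\eps>0$ and $A\in O(n;\mathbb{R})$. The annulus $\Omega$ and the boundary data $f_1,f_2$ depend only on $\norm{x}$, so $A(\Omega)=\Omega$ and $f_i\circ A=f_i$. By Lemma \ref{orthogonal_transformation}, $(u^\eps_1(Ax),u^\eps_2(Ax))$ solves the same system \eqref{main_problems_rewritten} with the same data. Uniqueness (Theorem \ref{existence_and_uniqueness}) then gives $u^\eps_i(Ax)=u^\eps_i(x)$, so each $u^\eps_i$ is radial. Since local uniform limits of radial functions are radial, $u_1$ and $u_2$ are radial too. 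Write $u_i(x)=v_i(\norm{x})$.

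\textbf{Step 2: structure of the positivity sets.} I would first show that $\{v_1>0\}$ is an interval $(R_1,a)$ and $\{v_2>0\}$ is an interval $(b,R_2)$.
\begin{itemize}
\item On each component of $\{u_i>0\}$ the function $u_i$ is harmonic (Theorem \ref{main theorem3-old}), so $v_i$ solves $(r^{n-1}v_i')'=0$ there.
\item A component of the form $(\alpha,\beta)$ with $R_1<\alpha<\beta<R_2$ would carry a radial harmonic function vanishing at both ends. Such a function is identically zero, which is a contradiction.
\item Hence every component of $\{v_1>0\}$ touches $R_1$ or $R_2$. It cannot touch $R_2$, because $u_1=0$ on the outer boundary layer and $u_1^\eps\to u_1$ up to the boundary (I would use continuity up to $\overline\Omega$, or argue via supports), unless $v_1>0$ on the whole of $(R_1,R_2)$.
\item That last case is excluded. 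Both populations are nonzero in the limit near their boundary layers: near $\norm{x}=R_1$ we have $u_1>0$, and near $R_2$ we have $u_2>0$, since the data $M_i>0$ and the equicontinuity near the boundary force this. The supports are at distance $\ge R$ from each other, so $u_1$ cannot be positive everywhere.
\end{itemize}
So $\{u_1>0\}=\{R_1<\norm{x}<a\}$ and, symmetrically, $\{u_2>0\}=\{b<\norm{x}<R_2\}$, with $b-a\ge R$. Set $r^*=a$.

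\textbf{Step 3: the gap is exactly $R$.} By Theorem \ref{main theorem5-old}, any point $x_0$ with $\norm{x_0}=a$ has a point of $\partial\{u_2>0\}$ in $\overline{B_R(x_0)}$. Such a point lies on $\{\norm{x}=b\}$ and is at distance at least $b-a$ from $x_0$, so $b-a\le R$. Combined with $b-a\ge R$ this gives $b=a+R$. This also yields $r^*\in[R_1,R_2-R]$. If the free boundary is empty in the degenerate endpoint cases, the statement should be read with $r^*$ at the endpoint; I would handle these cases separately, showing nondegeneracy so that the free boundary is genuinely interior.

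\textbf{Step 4: regularity of free boundary points.} Take $x_0$ with $\norm{x_0}=r^*$. The only point of $\partial C_1=\{\norm{x}=r^*+R\}$ at distance $R$ from $x_0$ is $x_0(1+R/r^*)$, by equality in the triangle inequality. So $x_0$ is regular. Symmetrically, for $y_0$ with $\norm{y_0}=r^*+R$, the unique point of $\{\norm{x}=r^*\}$ at distance $R$ is $y_0\,r^*/(r^*+R)$, so $y_0$ is regular as well.

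\textbf{Main obstacle.} I expect the main difficulty to be Step 2's nondegeneracy near the boundary layers. One has to justify that $u_1$ is positive in a neighborhood of the inner sphere, and that the limit attains the boundary values, so that no population vanishes entirely and components are anchored correctly. I would try comparison with radial barriers for $u_1^\eps$ near $\norm{x}=R_1$, using that $u_2^\eps$ vanishes on the inner layer and is uniformly bounded.
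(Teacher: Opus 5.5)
Your proposal is correct and follows the paper's own route. Radiality comes from Lemma \ref{orthogonal_transformation} plus uniqueness, the gap of exactly $R$ from Theorem \ref{main theorem5-old} together with support separation, and regularity straight from Definition \ref{basic_definition}. Your Step 2 (a radial harmonic function vanishing at both ends of an interval is zero) actually supplies a justification the paper skips: that the radial limit has a single sphere as free boundary.

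The boundary nondegeneracy you flag is likewise only implicit in the paper, and your barrier plan for it would not work. A barrier that uses only the boundedness of $u_2^\eps$ and its vanishing on the inner layer still faces a potential of size $\eps^{-2}$ near $\norm{x}=R_1$, so it yields only an $e^{-c/\eps}$ lower bound. Instead, integrate the two equations over annuli, using that the interaction kernel $1_{\{|x-y|<R\}}$ is symmetric and that $r^{n-1}\partial_r u_i^\eps$ is nondecreasing. This gives a uniform lower bound on $\partial_r u_1^\eps$ near the inner sphere, and hence $u_1^\eps\ge M_1/2$ on a fixed neighborhood of it.
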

\begin{proof}

Assume that $(u^{\epsilon}_1, u^{\epsilon}_2)$ is a solution of \eqref{main_problems_rewritten}, and $A \in O(n; \mathbb{R})$. (That is, $A$ is a real orthogonal $n \times n$ matrix). Take $w^{\epsilon}_1(x) = u^{\epsilon}_1(Ax)$ and $w^{\epsilon}_2(x) = u^{\epsilon}_2(Ax)$ for all $x \in \Omega$. By Lemma \ref{orthogonal_transformation}, $(w^{\epsilon}_1, w^{\epsilon}_2)$ is also a solution of \eqref{main_problems_rewritten}. By the uniqueness part of Theorem \ref{existence_and_uniqueness}, we have $(w^{\epsilon}_1, w^{\epsilon}_2) = (u^{\epsilon}_1, u^{\epsilon}_2)$. By passing to a subsequential limit as $\epsilon \rightarrow 0^+$, we have $(u_1(x), u_2(x)) = (w_1(x), w_2(x)) = (u_1(Ax), u_2(Ax))$ for all $x \in \Omega$. Therefore, $u_1(x)$ and $u_2(x)$ are both radial functions, so $\partial \{u_1 > 0\} \cap \Omega = \{\norm{x} = r^*\}$ for some $r^*$. 
Also, by Theorem \ref{main theorem5-old} and the fact that 
\begin{equation}
\begin{cases} 
        u^{\eps}_1 = M_1 & \text{ on } \{x \in \mathbb{R}^n: R_1 -\sizeofboundary \leq \norm{x} \leq R_1\}, \\
        u^{\eps}_1 = 0 & \text{ on } \{x \in \mathbb{R}^n: R_2 \leq \norm{x} \leq R_2 + \sizeofboundary\}, \\
        u^{\eps}_2 = 0 & \text{ on } \{x \in \mathbb{R}^n: R_1 -\sizeofboundary \leq \norm{x} \leq R_1\}, \\
        u^{\eps}_2 = M_2 & \text{ on } \{x \in \mathbb{R}^n: R_2 \leq \norm{x} \leq R_2 + \sizeofboundary\},
       \end{cases}
\end{equation}
it follows that $\partial \{u_2 > 0\} \cap \Omega = \{\norm{x} = \sizeofboundary + r^*\}$. Since $R_1 \leq r^* \leq r^* + \sizeofboundary \leq R_2$, it follows that $r^* \in [R_1, R_2 -\sizeofboundary]$.  

Finally, all the free boundary points are regular by Definition \ref{basic_definition}. 
\end{proof}

Our next goal is to show that $r^*$ is uniquely determined by $M_1, M_2, R_1, R_1,$ and $\sizeofboundary$, and find $r^*$ in terms of the five variables $M_1, M_2, R_1, R_1,$ and $\sizeofboundary$. Before we proceed, we first require a notation and a Calculus fact.

\begin{notation}
\label{Notation_for_Two_Standard_Functions}

Assume that $R_1, R_2 > 0$, and $\sizeofboundary$ are such that $\sizeofboundary < R_1 < R_1 + \sizeofboundary < R_2$.
\begin{enumerate}
\item We define $\phi_{R_1, R_2, \sizeofboundary, 2}: [R_1, R_2 -\sizeofboundary) \rightarrow \mathbb{R}$ by setting 
\begin{equation}
    \phi_{R_1, R_2, \sizeofboundary, 2}(x) : = \frac{\log(\frac{x}{R_1})}{\log(\frac{R_2}{x + \sizeofboundary})}
\end{equation}
for all $x \in [R_1, R_2 - \sizeofboundary)$.
\item For all $n \geq 3$, we define $\phi_{R_1, R_2, \sizeofboundary, n}: [R_1, R_2 -\sizeofboundary) \rightarrow \mathbb{R}$ by setting 
\begin{equation}
    \phi_{R_1, R_2, \sizeofboundary, n}(x) : = \frac{R_1^{2-n} - x^{2-n}}{(x + \sizeofboundary)^{2 - n} - R_2^{2-n}}
\end{equation}
for all $x \in [R_1, R_2 - \sizeofboundary)$.
\end{enumerate}
\end{notation}

We need the following result regarding $\phi_{R_1, R_2, \sizeofboundary, n}(x)$, whose proof is standard.

\begin{lemma}
\label{Phi_is_Strictly_Increasing}
Consider the same setting as in Notation \ref{Notation_for_Two_Standard_Functions}. Then we have the followings:
\begin{enumerate}
\item For any $n \geq 2$, the function $\phi_{R_1, R_2, \sizeofboundary, n}(x)$ is a non-negative continuous strictly increasing function on $[R_1, R_2 - \sizeofboundary)$, with $\lim_{x \rightarrow (R_2 - \sizeofboundary)^-}\phi_{R_1, R_2, \sizeofboundary, n}(x) = \infty$ and $\phi_{R_1, R_2, \sizeofboundary, n}(R_1) = 0$.
\item If $0 < r_2 < r_1 < R_2 - R_1$, we have $\phi^{-1}_{R_1, R_2, 0, n}(x) \geq \phi^{-1}_{R_1, R_2, r_2, n}(x) \geq \phi^{-1}_{R_1, R_2, r_1, n}(x)$ for all $x \in [0,\infty)$.
\item We have $\lim_{\sizeofboundary \rightarrow 0^+}\phi^{-1}_{R_1, R_2, \sizeofboundary,n}(x) = \phi^{-1}_{R_1, R_2, 0,n}(x)$ for all $x \in [0,\infty)$.
\end{enumerate}
\end{lemma}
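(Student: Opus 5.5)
We will prove the three parts of Lemma \ref{Phi_is_Strictly_Increasing} by elementary calculus, treating numerator and denominator of $\phi_{R_1,R_2,\sizeofboundary,n}$ separately. Write $\phi = N/D$. For $n=2$, $N(x)=\log(x/R_1)$ and $D(x)=\log(R_2/(x+\sizeofboundary))$. For $n\ge 3$, $N(x)=R_1^{2-n}-x^{2-n}$ and $D(x)=(x+\sizeofboundary)^{2-n}-R_2^{2-n}$.

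\emph{Part (1).} On $[R_1,R_2-\sizeofboundary)$ both $N$ and $D$ are continuous. Since $x\mapsto x^{2-n}$ and $x\mapsto -\log x$ are strictly decreasing, $N$ is nonnegative and strictly increasing with $N(R_1)=0$. Likewise $D$ is strictly positive and strictly decreasing, with $D(x)\to 0^+$ as $x\to (R_2-\sizeofboundary)^-$; at that endpoint $N$ stays bounded below by $N(R_2-\sizeofboundary)>0$. Hence $\phi=N/D$ is continuous and nonnegative, with $\phi(R_1)=0$ and $\phi\to\infty$ at the right endpoint. Strict monotonicity follows because, for $x<y$, we have $N(x)\le N(y)$ and $D(x)>D(y)>0$, with $N(y)>0$ when $y>R_1$. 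Consequently $\phi$ is a bijection $[R_1,R_2-\sizeofboundary)\to[0,\infty)$ with continuous strictly increasing inverse. This justifies writing $\phi^{-1}$ on $[0,\infty)$ in parts (2) and (3). Note that for $\sizeofboundary=0$ the same formulas define $\phi_{R_1,R_2,0,n}$ on $[R_1,R_2)$.

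\emph{Part (2).} Fix $x\in(R_1,R_2-r_1)$. The numerator $N$ does not depend on $\sizeofboundary$, while $D$ is strictly decreasing in $\sizeofboundary$, since $x+\sizeofboundary$ increases. So $\phi_{R_1,R_2,\sizeofboundary,n}(x)$ is increasing in $\sizeofboundary$, giving
\[
\phi_{R_1,R_2,0,n}\le\phi_{R_1,R_2,r_2,n}\le\phi_{R_1,R_2,r_1,n}
\]
on the common domain $[R_1,R_2-r_1)$. To invert this ordering, fix $y\ge 0$ and let $a=\phi^{-1}_{R_1,R_2,r_1,n}(y)$. Then $\phi_{R_1,R_2,r_2,n}(a)\le y$, and since $\phi_{R_1,R_2,r_2,n}$ is increasing, $\phi^{-1}_{R_1,R_2,r_2,n}(y)\ge a$. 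The comparison between $0$ and $r_2$ is identical. The only point needing care is that the domains differ. This is harmless: the larger-$\sizeofboundary$ inverse always lands in the smaller domain, which is contained in the others.

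\emph{Part (3).} This is the main obstacle, since it requires convergence of inverses rather than of the functions themselves. Fix $y\ge 0$. By part (2), $\phi^{-1}_{R_1,R_2,\sizeofboundary,n}(y)$ is monotone nonincreasing in $\sizeofboundary$ and bounded above by $\phi^{-1}_{R_1,R_2,0,n}(y)=:b$. Hence the limit $L\le b$ as $\sizeofboundary\to 0^+$ exists. Write $a_\sizeofboundary=\phi^{-1}_{R_1,R_2,\sizeofboundary,n}(y)$, so that $\phi_{R_1,R_2,\sizeofboundary,n}(a_\sizeofboundary)=y$.

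We first check that $L<R_2$. For a fixed small $\sizeofboundary_0$, we have $a_\sizeofboundary\le b<R_2$. Next we pass to the limit. The map $(\sizeofboundary,x)\mapsto\phi_{R_1,R_2,\sizeofboundary,n}(x)$ is jointly continuous on $\{x+\sizeofboundary<R_2\}$, because $D>0$ there. Since $L<R_2$, the points $(\sizeofboundary,a_\sizeofboundary)$ eventually stay in a compact subset of that region, so we obtain $\phi_{R_1,R_2,0,n}(L)=y$. Injectivity of $\phi_{R_1,R_2,0,n}$ from part (1) then gives $L=b$.
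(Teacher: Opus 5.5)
Your proposal is correct and follows the paper's route. Parts (1) and (2) are the elementary monotonicity facts the paper calls clear. Part (3) is the same argument: the limit of $\phi^{-1}_{R_1,R_2,R,n}(y)$ exists by the monotonicity from (2), you pass to the limit in $\phi_{R_1,R_2,R,n}(a_R)=y$, and injectivity of $\phi_{R_1,R_2,0,n}$ identifies the limit. Your bound $L\le \phi^{-1}_{R_1,R_2,0,n}(y)<R_2$ is a useful addition, since it is exactly what keeps the denominator positive in the limit, whereas the paper only records $y_0\le R_2$.
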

\begin{proof}
(1) is clear, and (2) follows easily from the definition of $\phi_{R_1, R_2, \sizeofboundary, n}(x)$ in Notation \ref{Notation_for_Two_Standard_Functions}. Therefore, we focus on proving (3) only. 
It suffices to show that if $\{r_k\}_{k = 1}^{\infty} \subset [0,\infty)$ is a sequence with $r_1 \geq r_2 \geq r_3 \geq \cdots$ and $r_k \rightarrow 0$, we have $\lim_{k \rightarrow \infty}\phi^{-1}_{R_1, R_2, r_k,n}(x) = \phi^{-1}_{R_1, R_2, 0,n}(x)$ for all $x \in [0, \infty)$. Take $y_k := \phi^{-1}_{R_1, R_2, r_k, n}(x)$, so by (2), $\{y_k\}_{k \in \mathbb{N}}$ is an increasing sequence, so $\{y_k\}_{k \in \mathbb{N}}$ converges to $\sup_{k \in \mathbb{N}} y_k =: y_0 \leq R_2$. Since $\phi_{R_1, R_2, r_k, n}(y_k) = x$, we have
\begin{equation}
\label{The_Equation_Satisfied_by_y_k}
\begin{split}
\begin{cases}
\frac{\log(\frac{y_k}{R_1})}{\log(\frac{R_2}{y_k + r_k})} = x, & n = 2, \\
\frac{R_1^{2-n} - y_k^{2-n}}{(y_k + r_k)^{2 - n} - R_2^{2-n}} = x, & n \geq 3.
\end{cases}
\end{split}
\end{equation}
Pass to the limit as $k \rightarrow \infty$ in \eqref{The_Equation_Satisfied_by_y_k}, and we obtain 
\begin{equation}
\label{The_Equation_Satisfied_by_y_k}
\begin{split}
\begin{cases}
\frac{\log(\frac{y_0}{R_1})}{\log(\frac{R_2}{y_0})} = x, & n = 2, \\
\frac{R_1^{2-n} - y_0^{2-n}}{(y_0)^{2 - n} - R_2^{2-n}} = x, & n \geq 3.
\end{cases}
\end{split}
\end{equation}
This implies that $\phi_{R_1, R_2, 0, n}(y_0) = x$, so $y_0 = \phi_{R_1, R_2, 0, n}^{-1}(x)$. Consequently, $\lim_{k \rightarrow \infty}\phi^{-1}_{R_1, R_2, r_k,n}(x) = \phi^{-1}_{R_1, R_2, 0,n}(x)$, as desired.

\end{proof}

We are now ready to determine $r^*$ in the two-dimensional case.

\begin{lemma}
\label{Analysis_of_r^*_in_2D}

Consider the case in which $n = 2$. Then $r^* \in [R_1, R_2 - \sizeofboundary]$ is the unique solution to the equation
\begin{equation}
\label{The_Equation_Satisfied_by_r^*_in_2D}
\frac{\log(\frac{r^*}{R_1})}{\log (\frac{R_2}{r^* + \sizeofboundary})} - \frac{M_1}{M_2} = 0
\end{equation}
in $[R_1, R_2 - \sizeofboundary]$. 
\end{lemma}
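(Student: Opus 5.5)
By Lemma \ref{Existence_of_r^*}, the limit $u_1$ is positive exactly on $\{R_1 < \norm{x} < r^*\}$ and $u_2$ exactly on $\{r^*+\sizeofboundary < \norm{x} < R_2\}$, and both are radial. By Theorem \ref{main theorem3-old}(2), each $u_i$ is harmonic on its positivity set. Radial harmonic functions in $n=2$ have the form $a\log\norm{x}+b$. We first impose the boundary values: $u_1 = M_1$ on $\norm{x}=R_1$ and $u_1 = 0$ on $\norm{x}=r^*$, the latter by continuity (local Lipschitz regularity) of $u_1$ at the free boundary. Similarly $u_2 = 0$ on $\norm{x} = r^*+\sizeofboundary$ and $u_2 = M_2$ on $\norm{x}=R_2$. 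These give
\begin{equation*}
u_1(x) = M_1\frac{\log(r^*/\norm{x})}{\log(r^*/R_1)}, \qquad u_2(x) = M_2\frac{\log(\norm{x}/(r^*+\sizeofboundary))}{\log(R_2/(r^*+\sizeofboundary))}.
\end{equation*}
A preliminary step is to justify that the limit attains the boundary data continuously up to $\partial\Omega$. I expect this follows from the uniform Hölder bounds underlying Theorem \ref{main theorem3-old}, or from barrier arguments as in \cite{CL2}. I would also rule out the degenerate cases $r^*=R_1$ and $r^*=R_2-\sizeofboundary$, in which one population would vanish identically. For instance, $u_1^\eps \geq$ the solution with $u_2^\eps$ replaced by its upper bound $M_2$ gives positivity near $\norm{x}=R_1$ in the limit, and symmetrically for $u_2$.

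Next I apply the free boundary condition, Theorem \ref{The_Free_Boundary_Condition}. After a rotation and translation, a point $x_0$ with $\norm{x_0}=r^*$ plays the role of $0$, and $x_0 + \sizeofboundary x_0/\norm{x_0}$ plays the role of $\sizeofboundary e_n$. Both free boundaries are circles, hence $C^2$. The curvature of $\partial\{u_1>0\}$ is $1/r^*$ (outward positive). The curvature of $\partial\{u_2>0\}$ with the inward convention is $1/(r^*+\sizeofboundary)$. Both are less than $1/\sizeofboundary$ because $r^* > \sizeofboundary$. I will need to check the sign conventions of Theorem \ref{The_Free_Boundary_Condition} carefully. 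This is the main obstacle, since the conventions determine whether the ratio of curvatures is $(r^*+\sizeofboundary)/r^*$ or its reciprocal. With $n-1=1$, the condition reads
\begin{equation*}
\frac{|\partial_\nu u_1(x_0)|}{|\partial_\nu u_2(x_0+\sizeofboundary \tfrac{x_0}{\norm{x_0}})|} = \frac{1/r^*}{1/(r^*+\sizeofboundary)} = \frac{r^*+\sizeofboundary}{r^*}.
\end{equation*}

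Computing the normal derivatives from the explicit formulas gives $|\partial_\nu u_1| = M_1/(r^*\log(r^*/R_1))$ and $|\partial_\nu u_2| = M_2/((r^*+\sizeofboundary)\log(R_2/(r^*+\sizeofboundary)))$. Substituting these, the factors $r^*$ and $r^*+\sizeofboundary$ cancel against the curvature ratio, leaving
\begin{equation*}
\frac{M_1\log(R_2/(r^*+\sizeofboundary))}{M_2\log(r^*/R_1)} = 1,
\end{equation*}
which is \eqref{The_Equation_Satisfied_by_r^*_in_2D}. This cancellation is a consistency check on the sign conventions chosen in the previous step.

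Uniqueness then follows from Lemma \ref{Phi_is_Strictly_Increasing}(1). The function $\phi_{R_1,R_2,\sizeofboundary,2}$ is continuous and strictly increasing from $0$ to $\infty$ on $[R_1,R_2-\sizeofboundary)$, so the equation $\phi_{R_1,R_2,\sizeofboundary,2}(r^*) = M_1/M_2 > 0$ has exactly one solution. That solution lies in $(R_1, R_2-\sizeofboundary)$, and the endpoint $R_2-\sizeofboundary$ is excluded since $\phi\to\infty$ there.
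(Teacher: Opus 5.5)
Your proposal follows the paper's proof. The paper likewise identifies $u_1,u_2$ with the explicit logarithmic profiles on $\{R_1<\norm{x}<r^*\}$ and $\{r^*+\sizeofboundary<\norm{x}<R_2\}$, using the comparison principle where you use the radial form $a\log\norm{x}+b$. It then applies Theorem~\ref{The_Free_Boundary_Condition} with curvature ratio $\frac{1/r^*}{1/(r^*+\sizeofboundary)}$, so that the factors $r^*$ and $r^*+\sizeofboundary$ cancel, and obtains uniqueness from the strict monotonicity of $\phi_{R_1,R_2,\sizeofboundary,2}$.

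One side argument would fail as written: your ``for instance'' exclusion of $r^*=R_1$. Comparing $u_1^\eps$ with the solution $v^\eps$ of $\Delta v^\eps=\frac{M_2}{\eps^2}v^\eps$ does give $u_1^\eps\ge v^\eps$. However, $v^\eps\le M_1e^{-\sqrt{M_2}(\norm{x}-R_1)/\eps}\to 0$ at every interior point, so this bound says nothing about positivity of the limit. The nondegeneracy $R_1<r^*<R_2-\sizeofboundary$ is needed; the paper uses it tacitly when it divides by $\log(R_1/r^*)$. It should instead come from your other suggestion: continuity of the limit up to $\partial\Omega$, with $u_1=M_1$ on $\norm{x}=R_1$ and $u_2=M_2$ on $\norm{x}=R_2$. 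That is the same boundary behavior the paper assumes without comment.
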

\begin{proof}

Take 
\begin{equation}
\begin{split}
A = \{x \in \mathbb{R}^n: R_1 < \norm{x} < r^*\}, \\
B = \{x \in \mathbb{R}^n: r^* + \sizeofboundary < \norm{x} < R_2\}.
\end{split}
\end{equation}

Observe that $u_1(x)$ and the function $\frac{M_1}{\log (\frac{R_1}{r^*})}\log |x| - \frac{M_1\log (r^*)}{\log (\frac{R_1}{r^*})}$ are both harmonic in $A$ and take on the same boundary value on $\partial A$. (To see that they agree on $\partial A$, we note that $\partial A = \{\norm{x} = r^*\} \cup \{\norm{x} = R_1\}$, and they both vanish on $\{\norm{x} = r^*\}$, and they both take on the value $M_1$ on $\{\norm{x} = R_1\}$.) By the standard comparison principle for harmonic functions, we have $u_1(x) = \frac{M_1}{\log (\frac{R_1}{r^*})}\log |x| - \frac{M_1\log (r^*)}{\log (\frac{R_1}{r^*})}$ identically in $A$. 

Now take $\{e_1, e_2\} \subset \mathbb{R}^2$ to be the standard orthonormal basis of $\mathbb{R}^2$, so by a direct computation we have 
\begin{equation}
\label{Normal_Derivative_for_u_1}
\frac{\partial u_1}{\partial \nu}(r^*e_2) = \frac{M_1}{\log (\frac{R_1}{r^*})}\frac{1}{r^*},
\end{equation}
where the outward is the positive direction. 

By a similar argument applied to $u_2(x)$ and $\frac{M_2}{\log (\frac{R_2}{r^* + \sizeofboundary})}\log |x| - \frac{M_2\log (r^* + \sizeofboundary)}{\log (\frac{R_2}{r^* + \sizeofboundary})}$ in the annular domain $B$, we have $u_2(x) = \frac{M_2}{\log (\frac{R_2}{r^* + \sizeofboundary})}\log |x| - \frac{M_2\log (r^* + \sizeofboundary)}{\log (\frac{R_2}{r^* + \sizeofboundary})}$ identically in $B$. Similarly as in the case of $u_1$, we have 
\begin{equation}
\label{Normal_Derivative_for_u_2}
\frac{\partial u_2}{\partial \nu}((r^*+\sizeofboundary)e_2) = -\frac{M_2}{\log (\frac{R_2}{r^* + \sizeofboundary})}\frac{1}{r^* + \sizeofboundary},
\end{equation}
where we have a minus sign as now the inward is the positive direction.

Now by \eqref{Normal_Derivative_for_u_1}, \eqref{Normal_Derivative_for_u_2}, and \eqref{The_First_Free_Boundary_Condition} in Theorem \ref{The_Free_Boundary_Condition}, we have 
\begin{equation}
\frac{\frac{M_1}{\log (\frac{R_1}{r^*})}\frac{1}{r^*}}{-\frac{M_2}{\log (\frac{R_2}{r^* + \sizeofboundary})}\frac{1}{r^* + \sizeofboundary}} = \frac{\frac{1}{r^*}}{\frac{1}{r^* +\sizeofboundary}}.
\end{equation}
Therefore, 
\begin{equation}
\frac{M_1}{\log (\frac{R_1}{r^*})} = -\frac{M_2}{\log (\frac{R_2}{r^* + \sizeofboundary})}.
\end{equation}
Rearrange the equation, and we obtain \eqref{The_Equation_Satisfied_by_r^*_in_2D}. 

Finally, we observed that $r^*$ is the solution of the equation $\phi_{R_1, R_2, \sizeofboundary, 2}(r^*) = \frac{M_1}{M_2}$, so in view of Lemma \ref{Phi_is_Strictly_Increasing}, $r^*$ is unique.  
\end{proof}

Now using a similar technique, we can also determine $r^*$ in dimensions $n \geq 3$.

\begin{lemma}
\label{Analysis_of_r^*_in_nD}
Consider the case in which $n \geq 3$. Then $r^* \in [R_1, R_2 - \sizeofboundary]$ is the unique solution to the equation
\begin{equation}
\label{The_Equation_Satisfied_by_r^*_in_nD}
\frac{R_1^{2-n} - (r^*)^{2-n}}{{(r^* + \sizeofboundary)^{2-n}} - {R_2^{2-n}}} - \frac{M_1}{M_2} = 0
\end{equation}
in $[R_1, R_2 - \sizeofboundary]$. 
\end{lemma}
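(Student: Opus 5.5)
The plan is to mirror the proof of Lemma \ref{Analysis_of_r^*_in_2D}, replacing the logarithm by the fundamental solution $|x|^{2-n}$. By Lemma \ref{Existence_of_r^*}, the free boundaries are the spheres $\{\norm{x} = r^*\}$ and $\{\norm{x} = r^*+\sizeofboundary\}$. We set
\[
A = \{R_1 < \norm{x} < r^*\}, \qquad B = \{r^*+\sizeofboundary < \norm{x} < R_2\}.
\]
By Theorem \ref{main theorem3-old}, $u_1$ is harmonic in $A$, with $u_1 = M_1$ on $\{\norm{x}=R_1\}$ and $u_1 = 0$ on $\{\norm{x}=r^*\}$. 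The radial function
\[
v_1(x) = M_1\,\frac{|x|^{2-n} - (r^*)^{2-n}}{R_1^{2-n} - (r^*)^{2-n}}
\]
is harmonic in $A$ and has the same boundary values. By the comparison (maximum) principle, $u_1 = v_1$ in $A$. In the same way, on $B$ we have
\[
u_2(x) = M_2\,\frac{(r^*+\sizeofboundary)^{2-n} - |x|^{2-n}}{(r^*+\sizeofboundary)^{2-n} - R_2^{2-n}}.
\]
Before doing this we must check that $r^*>R_1$ and $r^*+\sizeofboundary<R_2$, so that $A$ and $B$ are nonempty and the denominators are nonzero. This holds because $u_1 \not\equiv 0$ near $\{\norm{x}=R_1\}$ and $u_2\not\equiv 0$ near $\{\norm{x}=R_2\}$: the limits are Lipschitz and take the positive boundary data. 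This is the same implicit step as in the 2D proof.

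Next we compute the normal derivatives by direct differentiation. The outward derivative of $u_1$ at $r^* e_n$ is
\[
\frac{(2-n)M_1 (r^*)^{1-n}}{R_1^{2-n}-(r^*)^{2-n}}.
\]
With the inward direction taken as positive, as in Theorem \ref{The_Free_Boundary_Condition}, the derivative of $u_2$ at $(r^*+\sizeofboundary)e_n$ is
\[
\frac{(2-n)M_2 (r^*+\sizeofboundary)^{1-n}}{(r^*+\sizeofboundary)^{2-n}-R_2^{2-n}}.
\]
Its sign must be tracked consistently with the 2D case, where a minus sign appeared.

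We then apply the free boundary condition \eqref{The_First_Free_Boundary_Condition}. Both free boundaries are smooth spheres. All $n-1$ principal curvatures equal $1/r^*$ at $0$ and $1/(r^*+\sizeofboundary)$ at the partner point. These are nonzero and, with the sign conventions, satisfy the hypothesis $\chi_i<1/\sizeofboundary$; this needs checking, but it is the same verification implicitly used in 2D. The right-hand side of \eqref{The_First_Free_Boundary_Condition} is therefore $\bigl((r^*+\sizeofboundary)/r^*\bigr)^{n-1}$. This exactly cancels the factors $(r^*)^{1-n}$ and $(r^*+\sizeofboundary)^{1-n}$ in the ratio of normal derivatives. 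The common factor $(2-n)$ also cancels, leaving
\[
\frac{M_1}{R_1^{2-n}-(r^*)^{2-n}} = \frac{M_2}{(r^*+\sizeofboundary)^{2-n}-R_2^{2-n}}
\]
up to the sign bookkeeping. Rearranging gives \eqref{The_Equation_Satisfied_by_r^*_in_nD}, that is, $\phi_{R_1,R_2,\sizeofboundary,n}(r^*) = M_1/M_2$. By Lemma \ref{Phi_is_Strictly_Increasing}(1), $\phi_{R_1,R_2,\sizeofboundary,n}$ is a strictly increasing continuous bijection from $[R_1,R_2-\sizeofboundary)$ onto $[0,\infty)$. Hence the solution exists and is unique.

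The main obstacle is the sign convention in Theorem \ref{The_Free_Boundary_Condition}: orientation of the normals and signs of the curvatures on each sphere. The aim is that the product of curvature ratios cancels the radial factors and yields equality of the positive quantities $M_1/(R_1^{2-n}-(r^*)^{2-n})$ and $M_2/((r^*+\sizeofboundary)^{2-n}-R_2^{2-n})$. To handle this, I will fix the conventions exactly as in the 2D computation \eqref{Normal_Derivative_for_u_1}--\eqref{Normal_Derivative_for_u_2} and copy them.
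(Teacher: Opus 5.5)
Your proposal is correct and follows essentially the same route as the paper: identify $u_1$ and $u_2$ on $A$ and $B$ with the explicit radial harmonic functions via the comparison principle, compute the two normal derivatives, apply Theorem \ref{The_Free_Boundary_Condition} with all principal curvatures equal to $1/r^*$ and $1/(r^*+\sizeofboundary)$, and get uniqueness from the strict monotonicity of $\phi_{R_1,R_2,\sizeofboundary,n}$. Your sign worry is already settled by the formulas you wrote: both normal derivatives carry the factor $(2-n)$ and are negative, so their ratio gives the equation directly. The extra checks you flag ($r^*>R_1$, $r^*+\sizeofboundary<R_2$, and $\chi_i<1/\sizeofboundary$, which follows from $r^*\ge R_1>\sizeofboundary$) are left implicit in the paper.
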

\begin{proof}

Take 
\begin{equation}
\begin{split}
A = \{x \in \mathbb{R}^n: R_1 < \norm{x} < r^*\}, \\
B = \{x \in \mathbb{R}^n: r^* + \sizeofboundary < \norm{x} < R_2\}.
\end{split}
\end{equation}

Observe that $u_1(x)$ and the function $\frac{M_1}{R_1^{2-n} - (r^*)^{2-n}}\norm{x}^{2-n} - \frac{(r^*)^{2-n}M_1}{R_1^{2-n} - (r^*)^{2-n}}$ are both harmonic in $A$ and take on the same boundary value on $\partial A$. (To see that they agree on $\partial A$, we note that $\partial A = \{\norm{x} = r^*\} \cup \{\norm{x} = R_1\}$, and they both vanish on $\{\norm{x} = r^*\}$, and they both take on the value $M_1$ on $\{\norm{x} = R_1\}$.) By the standard comparison principle for harmonic functions, we have $u_1(x) = \frac{M_1}{R_1^{2-n} - (r^*)^{2-n}}\norm{x}^{2-n} - \frac{(r^*)^{2-n}M_1}{R_1^{2-n} - (r^*)^{2-n}}$ identically in $A$. 

Now take $\{e_1, \cdots, e_n\} \subset \mathbb{R}^n$ to be the standard orthonormal basis of $\mathbb{R}^n$, so by a direct computation we have 
\begin{equation}
\label{Normal_Derivative_for_u_1_n_geq_3}
\frac{\partial u_1}{\partial \nu}(r^*e_n) = \frac{M_1}{R_1^{2-n} - (r^*)^{2-n}}(2-n)(r^*)^{1-n},
\end{equation}
where the outward is the positive direction.

By a similar argument applied to $u_2(x)$ and $\frac{M_2}{R_2^{2-n} - (r^*+\sizeofboundary)^{2-n}}\norm{x}^{2-n} - \frac{M_2(r^*+\sizeofboundary)^{2-n}}{R_2^{2-n} - (r^*+\sizeofboundary)^{2-n}}$ in the annular domain $B$, we have $u_2(x) = \frac{M_2}{R_2^{2-n} - (r^*+\sizeofboundary)^{2-n}}\norm{x}^{2-n} - \frac{M_2(r^*+\sizeofboundary)^{2-n}}{R_2^{2-n} - (r^*+\sizeofboundary)^{2-n}}$ identically in $B$. 

Similarly as in the case of $u_1$, we have 
\begin{equation}
\label{Normal_Derivative_for_u_2_n_geq_3}
\frac{\partial u_2}{\partial \nu}((r^*+\sizeofboundary)e_n) = -\frac{M_2}{R_2^{2-n} - (r^* + \sizeofboundary)^{2-n}}(2-n) (r^* + \sizeofboundary)^{1-n},
\end{equation}
where we have a minus sign as now the inward is the positive direction.

Now by \eqref{Normal_Derivative_for_u_1_n_geq_3}, \eqref{Normal_Derivative_for_u_2_n_geq_3}, and \eqref{The_First_Free_Boundary_Condition} in Theorem \ref{The_Free_Boundary_Condition}, we have 
\begin{equation}
\frac{\frac{M_1}{R_1^{2-n} - (r^*)^{2-n}}(2-n)(r^*)^{1-n}}{-\frac{M_2}{R_2^{2-n} - (r^* + \sizeofboundary)^{2-n}}(2-n) (r^* + \sizeofboundary)^{1-n}} = \frac{(r^*)^{1-n}}{(r^* + \sizeofboundary)^{1-n}}.
\end{equation}
Therefore, 
\begin{equation}
\frac{M_1}{R_1^{2-n} - (r^*)^{2-n}} = -\frac{M_2}{R_2^{2-n} - (r^* + \sizeofboundary)^{2-n}}.
\end{equation}
Rearrange the equation, and we obtain \eqref{The_Equation_Satisfied_by_r^*_in_nD}. 
Finally, we observed that $r^*$ is the solution of the equation $\phi_{R_1, R_2, \sizeofboundary, n}(r^*) = \frac{M_1}{M_2}$, and so in view of Lemma \ref{Phi_is_Strictly_Increasing}, $r^*$ is unique.  
\end{proof}

Having determined $r^*$ uniquely in terms of $M_1, M_2, R_2, R_2$, and $\sizeofboundary$, our next step is to study the asymptotic behavior of $r^*$ as $M_1 \rightarrow \infty$, $M_1 \rightarrow 0$ (for $M_2, R_1, R_2$, and $\sizeofboundary$ fixed), and $M_2 \rightarrow \infty$, $M_2 \rightarrow 0$ (for $M_1, R_1, R_2$, and $\sizeofboundary$ fixed). We have the following result:

\begin{lemma}
\label{Relation_Between_r^*_and_M_1_and_M_2}

We have the followings:
\begin{enumerate}
\item In any dimensions, for fixed $M_2, \sizeofboundary, R_1, R_2$, $r^* = r^*(M_1, M_2, \sizeofboundary, R_1, R_2)$ is increasing as a function of $M_1$, and
\begin{equation}
\begin{split}
\lim_{M_1 \rightarrow 0^+}r^*(M_1, M_2, \sizeofboundary, R_1, R_2) = R_1, \\
\lim_{M_1 \rightarrow \infty} r^*(M_1, M_2, \sizeofboundary, R_1, R_2) = R_2 -\sizeofboundary.    
\end{split}
\end{equation}
\item In any dimensions, for fixed $M_1, \sizeofboundary, R_1, R_2$, $r^* = r^*(M_1, M_2, \sizeofboundary, R_1, R_2)$ is decreasing as a function of $M_2$, and 
\begin{equation}
\begin{split}
\lim_{M_2 \rightarrow 0^+}r^*(M_1, M_2, \sizeofboundary, R_1, R_2) = R_2 - \sizeofboundary, \\
\lim_{M_2 \rightarrow \infty} r^*(M_1, M_2, \sizeofboundary, R_1, R_2) = R_1.  
\end{split}
\end{equation}
\end{enumerate}
\end{lemma}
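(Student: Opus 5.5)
By Lemmas \ref{Analysis_of_r^*_in_2D} and \ref{Analysis_of_r^*_in_nD}, in every dimension $n \geq 2$ the radius satisfies
\begin{equation*}
r^*(M_1, M_2, \sizeofboundary, R_1, R_2) = \phi^{-1}_{R_1, R_2, \sizeofboundary, n}\left(\frac{M_1}{M_2}\right).
\end{equation*}
So the whole lemma reduces to properties of the inverse function $\psi := \phi^{-1}_{R_1, R_2, \sizeofboundary, n}$ on $[0,\infty)$, evaluated at the ratio $\lambda := M_1/M_2$.

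The first step is to establish these properties of $\psi$. By Lemma \ref{Phi_is_Strictly_Increasing}(1), $\phi_{R_1, R_2, \sizeofboundary, n}$ is continuous and strictly increasing on $[R_1, R_2-\sizeofboundary)$, with $\phi(R_1)=0$ and $\phi(x)\to\infty$ as $x\to(R_2-\sizeofboundary)^-$. The intermediate value theorem then shows that $\phi$ is a bijection onto $[0,\infty)$. Its inverse $\psi$ is therefore strictly increasing, and it is continuous, since the inverse of a continuous strictly monotone function on an interval is continuous. Moreover $\psi(0)=R_1$ and $\psi(\lambda)\to R_2-\sizeofboundary$ as $\lambda\to\infty$. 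The last limit holds because $\psi$ is increasing and bounded by $R_2-\sizeofboundary$, so it has a limit $L \leq R_2-\sizeofboundary$. If $L<R_2-\sizeofboundary$, then $\lambda=\phi(\psi(\lambda))\leq\phi(L)<\infty$ for all $\lambda$, which is a contradiction.

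Next comes the monotonicity in the masses. For part (1), with $M_2$ fixed, the ratio $\lambda=M_1/M_2$ is strictly increasing in $M_1$, so $r^*=\psi(\lambda)$ is increasing in $M_1$. Letting $M_1\to0^+$ gives $\lambda\to0^+$, and continuity of $\psi$ at $0$ yields $r^*\to\psi(0)=R_1$. Letting $M_1\to\infty$ gives $\lambda\to\infty$, so $r^*\to R_2-\sizeofboundary$. Part (2) is symmetric: with $M_1$ fixed, $\lambda$ is strictly decreasing in $M_2$, so $r^*$ is decreasing in $M_2$. As $M_2\to0^+$ we have $\lambda\to\infty$, giving the limit $R_2-\sizeofboundary$, and as $M_2\to\infty$ we have $\lambda\to0$, giving the limit $R_1$.

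The only point needing care is the endpoint behavior of $\psi$: continuity at $0$ and the limit at infinity. Both follow from the elementary inverse-function argument above, so I expect no real obstacle. One should only note that $r^*$ stays in $[R_1,R_2-\sizeofboundary)$ for finite positive masses, and that the limit $R_2-\sizeofboundary$ is approached but not attained.
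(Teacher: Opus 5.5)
Your proposal is correct and follows the paper's proof: both write $r^* = \phi^{-1}_{R_1,R_2,\sizeofboundary,n}(M_1/M_2)$, then obtain the monotonicity and the endpoint limits from Lemma \ref{Phi_is_Strictly_Increasing}(1), which makes $\phi^{-1}$ a continuous strictly increasing bijection from $[0,\infty)$ onto $[R_1,R_2-\sizeofboundary)$. The one addition is your contradiction argument for $\lim_{\lambda\to\infty}\psi(\lambda)=R_2-\sizeofboundary$, a step the paper only asserts (and there misprints the limit as $R_2-R_1$).
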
 
\begin{proof}

Note that $r^* = r^*(M_1, M_2, R_1, R_2, \sizeofboundary) = \phi^{-1}_{R_1, R_2, \sizeofboundary, n}(\frac{M_1}{M_2})$. Also, by Lemma \ref{Phi_is_Strictly_Increasing}, $\phi_{R_1, R_2, \sizeofboundary, n} : [R_1, R_2 -\sizeofboundary) \rightarrow [0,\infty)$ is strictly increasing, continuous, bijective, and $\lim_{x \rightarrow (R_2 - \sizeofboundary)^-} \phi_{R_1, R_2, \sizeofboundary, n}(x) = \infty$. Therefore, $\phi^{-1}_{R_1, R_2, \sizeofboundary, n}: [0, \infty) \rightarrow [R_1, R_2 - \sizeofboundary)$ is strictly increasing, continuous, bijective, $\lim_{x \rightarrow \infty}\phi_{R_1, R_2, \sizeofboundary, n}^{-1}(x) = R_2 - R_1$, and $\phi^{-1}_{R_1, R_2, \sizeofboundary, n}(0) = R_1$. Hence, $r^* = r^*(M_1, M_2, \sizeofboundary, R_1, R_2) = \phi^{-1}_{R_1, R_2, \sizeofboundary, n}(\frac{M_1}{M_2})$ is increasing as a function of $M_1$ (for fixed $M_2, \sizeofboundary, R_1, R_2$). Moreover, for fixed $M_2, \sizeofboundary, R_1, R_2$, we have
\begin{equation}
\begin{split}
\lim_{M_1 \rightarrow 0^+}r^*(M_1, M_2, \sizeofboundary, R_1, R_2) = R_1, \\
\lim_{M_1 \rightarrow \infty} r^*(M_1, M_2, \sizeofboundary, R_1, R_2) = R_2 -\sizeofboundary.    
\end{split}
\end{equation}
This proves (1). (2) can be proved similarly by using the fact that $r^* = r^*(M_1, M_2, \sizeofboundary, R_1, R_2) = \phi^{-1}_{R_1, R_2, \sizeofboundary, n}(\frac{M_1}{M_2})$.
\end{proof}

We give a short remark regarding Lemma \ref{Relation_Between_r^*_and_M_1_and_M_2}.

\begin{remark}

Lemma \ref{Relation_Between_r^*_and_M_1_and_M_2} matches with our intuition. Assume $M_1 \rightarrow \infty$, so from the point of view of the physical model, the first population in the inner layer grows stronger. This pushes the free boundary $\partial \{u_1 > 0\} \cap \Omega$ to the outer layer, so the radius $r^* = r^*(M_1, M_2, \sizeofboundary, R_1, R_2)$ increases. In the limiting case as $M_1 \rightarrow \infty$, $r^*$ converges to its maximal possible value, namely $R_2 - \sizeofboundary$. On the other hand, as $M_1 \rightarrow 0$, $r^*$ converges to its minimal possible value, namely $R_1$. The same idea applies to the case where the second population $M_2 \rightarrow 0$ and $M_2 \rightarrow \infty$.
\end{remark}

Now we proceed to show that as the interaction radius $\sizeofboundary \rightarrow 0^+$, the radius $r^* = r^*(M_1, M_2, \sizeofboundary, R_1, R_2)$ converges to $r_0 = r_0(M_1, M_2, R_1, R_2)$, where 
\begin{equation}
\begin{split}
    r_0 = r_0(M_1, M_2, R_1, R_2) = \begin{cases} R_1^{\frac{M_2}{M_1 + M_2}}R_2^{\frac{M_1}{M_1 + M_2}}, & n = 2, \\
    (\frac{M_1}{M_1 + M_2}\frac{1}{R_2^{n-2}} + \frac{M_2}{M_1 + M_2}\frac{1}{R_1^{n-2}})^{\frac{1}{2-n}}, & n \geq 3, 
    \end{cases}
\end{split}
\end{equation}
as mentioned in Remark \ref{Remark_about_Main_Result_1}. This result connects the theory of long-range interaction with the adjacent interaction.  

\begin{theorem}
\label{Relation_Between_r^*_and_Interaction_Distance}
 In any dimensions, for fixed $M_1, M_2, R_1, R_2$, $r^* = r^*(M_1, M_2, R, R_1, R_2)$ is decreasing as a function of $\sizeofboundary$, and
\begin{equation}
\begin{split}
\lim_{\sizeofboundary \rightarrow 0^+} r^*(M_1, M_2, \sizeofboundary, R_1, R_2) = r_0 = r_0(M_1, M_2, R_1, R_2),  
\end{split}
\end{equation}
where 
\begin{equation}
\begin{split}
    r_0 = r_0(M_1, M_2, R_1, R_2) = \begin{cases} R_1^{\frac{M_2}{M_1 + M_2}}R_2^{\frac{M_1}{M_1 + M_2}}, & n = 2, \\
    (\frac{M_1}{M_1 + M_2}\frac{1}{R_2^{n-2}} + \frac{M_2}{M_1 + M_2}\frac{1}{R_1^{n-2}})^{\frac{1}{2-n}}, & n \geq 3. 
    \end{cases}
\end{split}
\end{equation}
\end{theorem}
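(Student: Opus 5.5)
The plan is to express everything through the representation $r^* = \phi^{-1}_{R_1,R_2,\sizeofboundary,n}(M_1/M_2)$. This identity comes from Lemma \ref{Analysis_of_r^*_in_2D} for $n=2$ and from Lemma \ref{Analysis_of_r^*_in_nD} for $n\ge 3$. With $x := M_1/M_2 \in (0,\infty)$ fixed, both claims then become statements about the family of inverse functions $\phi^{-1}_{R_1,R_2,\sizeofboundary,n}(x)$ as $\sizeofboundary$ varies.

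\textbf{Monotonicity.} Take $0<r_2<r_1$, both admissible, so that $R_1 + r_1 < R_2$. I will use Lemma \ref{Phi_is_Strictly_Increasing}(2), which gives $\phi^{-1}_{R_1,R_2,r_2,n}(x)\ge \phi^{-1}_{R_1,R_2,r_1,n}(x)$. This is exactly the statement that $r^*$ is nonincreasing in $\sizeofboundary$.

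It is worth spelling out why that lemma holds, in case more detail is wanted. For fixed $y$, the numerator of $\phi_{R_1,R_2,\sizeofboundary,n}(y)$ does not depend on $\sizeofboundary$. The denominator, either $\log\frac{R_2}{y+\sizeofboundary}$ or $(y+\sizeofboundary)^{2-n}-R_2^{2-n}$, is positive and strictly decreasing in $\sizeofboundary$. Hence $\phi_{R_1,R_2,\sizeofboundary,n}(y)$ is pointwise nondecreasing in $\sizeofboundary$, and strictly increasing when $y>R_1$.

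Now set $y_i=\phi^{-1}_{R_1,R_2,r_i,n}(x)$. Then
\[
\phi_{R_1,R_2,r_2,n}(y_1)\le \phi_{R_1,R_2,r_1,n}(y_1)=x=\phi_{R_1,R_2,r_2,n}(y_2).
\]
Since $\phi_{R_1,R_2,r_2,n}$ is strictly increasing (Lemma \ref{Phi_is_Strictly_Increasing}(1)), this forces $y_1\le y_2$. Because $x>0$ we have $y_1>R_1$, so the first inequality is strict and in fact $r^*$ is strictly decreasing. One small check is needed: $y_1$ lies in $[R_1,R_2-r_1)\subset[R_1,R_2-r_2)$, so $\phi_{R_1,R_2,r_2,n}(y_1)$ is defined.

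\textbf{Limit.} By Lemma \ref{Phi_is_Strictly_Increasing}(3),
\[
\lim_{\sizeofboundary\to0^+} r^* = \phi^{-1}_{R_1,R_2,0,n}(M_1/M_2).
\]
What remains is to solve $\phi_{R_1,R_2,0,n}(y)=M_1/M_2$ explicitly.

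For $n=2$ the equation is $M_2\log(y/R_1)=M_1\log(R_2/y)$. This gives $(M_1+M_2)\log y = M_2\log R_1+M_1\log R_2$, that is,
\[
y=R_1^{\frac{M_2}{M_1+M_2}}R_2^{\frac{M_1}{M_1+M_2}}.
\]

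For $n\ge3$ the equation is $M_2(R_1^{2-n}-y^{2-n})=M_1(y^{2-n}-R_2^{2-n})$. This gives
\[
y^{2-n}=\frac{M_1R_2^{2-n}+M_2R_1^{2-n}}{M_1+M_2},
\]
and taking the power $\frac1{2-n}$ yields $r_0$.

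\textbf{Main obstacle.} The step needing the most care is the limit, specifically the use of Lemma \ref{Phi_is_Strictly_Increasing}(3). Its proof passes to the limit in the defining equation, and for that the limit $y_0$ must satisfy $y_0<R_2$, so that the denominator $\log(R_2/y_0)$, respectively $y_0^{2-n}-R_2^{2-n}$, does not vanish. I would justify this directly: if $y_k\to R_2$, the left-hand side of the defining equation would tend to $+\infty$, contradicting that it equals the finite number $x$. Alternatively, $y_k\le\phi^{-1}_{R_1,R_2,0,n}(x)<R_2$ by Lemma \ref{Phi_is_Strictly_Increasing}(2). With that in place, the limit identification is rigorous.
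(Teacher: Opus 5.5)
Your proposal is correct and follows the paper's proof essentially verbatim. You write $r^*=\phi^{-1}_{R_1,R_2,\sizeofboundary,n}(M_1/M_2)$, get monotonicity from Lemma \ref{Phi_is_Strictly_Increasing}(2) and the limit from Lemma \ref{Phi_is_Strictly_Increasing}(3), and identify $\phi^{-1}_{R_1,R_2,0,n}(M_1/M_2)=r_0$ by direct computation; your additions are correct refinements: the pointwise monotonicity of $\phi$ in $\sizeofboundary$ gives strict decrease, and the bound $y_k\le\phi^{-1}_{R_1,R_2,0,n}(x)<R_2$ justifies passing to the limit in Lemma \ref{Phi_is_Strictly_Increasing}(3), which the paper glosses over with only $y_0\le R_2$.
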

\begin{proof}
Recall that $r^* = r^*(M_1, M_2, \sizeofboundary, R_1, R_2) = \phi^{-1}_{R_1, R_2, \sizeofboundary,n}(\frac{M_1}{M_2})$, so by Lemma \ref{Phi_is_Strictly_Increasing} (2), it follows that $r^* = r^*(M_1, M_2, \sizeofboundary, R_1, R_2) = \phi^{-1}_{R_1, R_2, \sizeofboundary,n}(\frac{M_1}{M_2})$ is decreasing as a function of $\sizeofboundary$.

Also, by Lemma \ref{Phi_is_Strictly_Increasing} (3), we have $\lim_{\sizeofboundary \rightarrow 0^+} r^*(M_1, M_2, \sizeofboundary, R_1, R_2) = \lim_{\sizeofboundary \rightarrow 0^+}\phi^{-1}_{R_1, R_2, \sizeofboundary, n}(\frac{M_1}{M_2}) = \phi^{-1}_{R_1, R_2, 0, n}(\frac{M_1}{M_2})$. By a direct computation, we have $\phi_{R_1, R_2, 0, 2}(R_1^{\frac{M_2}{M_1 + M_2}}R_2^{\frac{M_1}{M_1 + M_2}}) = \frac{M_1}{M_2}$, and $\phi_{R_1, R_2, 0, n}((\frac{M_1}{M_1 + M_2}\frac{1}{R_2^{n-2}} + \frac{M_2}{M_1 + M_2}\frac{1}{R_1^{n-2}})^{\frac{1}{2 - n}}) = \frac{M_1}{M_2}$. Therefore, we have 
\begin{equation}
\begin{split}
\phi^{-1}_{R_1, R_2, 0, n}\left(\frac{M_1}{M_2}\right) = \begin{cases} R_1^{\frac{M_2}{M_1 + M_2}}R_2^{\frac{M_1}{M_1 + M_2}}, & n = 2, \\
(\frac{M_1}{M_1 + M_2}\frac{1}{R_2^{n-2}} + \frac{M_2}{M_1 + M_2}\frac{1}{R_1^{n-2}})^{\frac{1}{2 - n}}, & n \geq 3.
\end{cases}
\end{split}
\end{equation}
Therefore, by combining the above result, and we get
\begin{equation}
\begin{split}
\lim_{\sizeofboundary \rightarrow 0^+} r^*(M_1, M_2, \sizeofboundary, R_1, R_2) = \lim_{\sizeofboundary \rightarrow 0^+}\phi^{-1}_{R_1, R_2, \sizeofboundary, n}\left(\frac{M_1}{M_2}\right) 
\\ = \phi^{-1}_{R_1, R_2, 0, n}\left(\frac{M_1}{M_2}\right) = \begin{cases} R_1^{\frac{M_2}{M_1 + M_2}}R_2^{\frac{M_1}{M_1 + M_2}}, & n = 2, \\
(\frac{M_1}{M_1 + M_2}\frac{1}{R_2^{n-2}} + \frac{M_2}{M_1 + M_2}\frac{1}{R_1^{n-2}})^{\frac{1}{2 - n}}, & n \geq 3.
\end{cases}
\end{split}
\end{equation}
\end{proof}
{\bf Proof of Theorem \ref{Main_Result_1}:}
For (1), we note that by Lemma \ref{Existence_of_r^*}, there is a $r^* \in [R_1, R_2 -\sizeofboundary]$ such that 
\begin{equation}
\begin{split}
\partial \{u_1 > 0\} \cap \Omega = \{x \in \mathbb{R}^n: \norm{x} = r^*\}, \\
\partial \{u_2 > 0\} \cap \Omega = \{x \in \mathbb{R}^n: \norm{x} = r^* + \sizeofboundary\}.
\end{split}
\end{equation}
Moreover, all the free boundary points are regular. Such a $r^*$ is unique by Lemma \ref{Analysis_of_r^*_in_2D} and Lemma \ref{Analysis_of_r^*_in_nD}. This establishes (1). (2) follows from Lemma \ref{Analysis_of_r^*_in_2D}. (3) follows from Lemma \ref{Analysis_of_r^*_in_nD}. (4) and (5) follow from Lemma \ref{Relation_Between_r^*_and_M_1_and_M_2}, and (6) follows from Theorem \ref{Relation_Between_r^*_and_Interaction_Distance}.
$\Box$

We look at some examples of $r^* = r^*(M_1, M_2, \sizeofboundary, R_1, R_2)$.

\begin{example}
\label{Example_in_Which_M_1_Equals_M_2}
{\rm 
Consider the case in which $n = 2$ and $M_1 = M_2$, so by Theorem \ref{Main_Result_1} (2), one has 
\begin{equation}
    \frac{\log (\frac{r^*}{R_1})}{\log (\frac{R_2}{r^* + \sizeofboundary})} - 1 = 0,
\end{equation}
so $\frac{r^*}{R_1} = \frac{R_2}{r^* + \sizeofboundary}$. Note that this is a quadratic equation $(r^*)^2 + \sizeofboundary r^* - R_1R_2 = 0$ in $r^*$. Solve the quadratic equation for $r^*$ to obtain 
\begin{equation}
r^* = r^*(M_1, M_2, \sizeofboundary, R_1, R_2) = -\frac{1}{2}\sizeofboundary + \frac{1}{2}\sqrt{\sizeofboundary^2 + 4R_1R_2}.
\end{equation}
Observe that as $\sizeofboundary \rightarrow 0$, $r^* = r^*(M_1, M_2, \sizeofboundary, R_1, R_2) \rightarrow \sqrt{R_1R_2}$, which is consistent with Theorem \ref{Main_Result_1} (6). 
}
\end{example}

\section{One Parameter Families of Annular Domains and Boundary Data in Dimension Two}

This section is dedicated to the proof of Theorem \ref{Main_Result_2}. Recall that we study the one-parameter family of elliptic system \eqref{main_problems_rewritten_one_parameter_family}. We first consider the case in which $n = 2$.   

We assume that $(u^{\epsilon}_{1,t}, u^{\epsilon}_{2,t}) \in C^2(\Omega_t; \mathbb{R}^2) \cap C^0(\overline{\Omega_t}; \mathbb{R}^2)$ is a solution of \eqref{main_problems_rewritten_one_parameter_family} for any $\epsilon > 0, t \in I$, and suppose that $\{(u^{\epsilon}_{1,t}, u^{\epsilon}_{2,t})\}_{\epsilon > 0}$ converges along a subsequence as $\epsilon \rightarrow 0^+$ to $(u_{1,t}, u_{2,t})$. We also assume that the mapping $I \rightarrow \mathbb{R}$ given by $t \mapsto R_{1,t}$, $t \mapsto R_{2,t}$, $t \mapsto \sizeofboundary_{t}$, $t \mapsto {M}_{1,t}$, $t \mapsto {M}_{2,t}$ are all differentiable on $I$. Therefore, by Theorem \ref{Main_Result_1}, for any $t \in I$, there is a unique $r^* = r^*_{t} \in [R_{1,t}, R_{2,t} - \sizeofboundary_t]$ such that 
\begin{equation}
\begin{split}
\partial \{u_{1,t} > 0\} \cap \Omega_t = \{\norm{x} = r^*_{t}\},
\\ \partial \{u_{2,t} > 0\} \cap \Omega_t = \{\norm{x} = r^*_{t} + \sizeofboundary_t\}.
\end{split}
\end{equation}

We first compute the derivative of $r^*_t$.

\begin{lemma}
\label{Derivative_of_r^*_t}

Assume that $n = 2$, and let the setting be as above. Then the mapping $I \rightarrow \mathbb{R}$ given by $t \mapsto r^*_t$ is also differentiable on $I$, with 
\begin{equation}
\label{The_Main_Computational_Equation}
\left(\frac{d}{dt}r^*_t\right)\left(\frac{1}{x_{2,t}r^*_t} + \frac{x_{1,t}}{x_{2,t}^2(r^*_t + \sizeofboundary_t)}\right) = \frac{d}{dt}\left(\frac{M_{1,t}}{M_{2,t}}\right) + \frac{\frac{d}{dt}R_{1,t}}{R_{1,t}}\frac{1}{x_{2,t}} + \frac{x_{1,t}}{x_{2,t}^2}\frac{\frac{d}{dt}R_{2,t}}{R_{2,t}} - \frac{x_{1,t}\frac{d}{dt}\sizeofboundary_t}{x_{2,t}^2(r^*_t + \sizeofboundary_t)},
\end{equation}
where 
\begin{equation}
    x_{1,t} = \log(\frac{r^*_t}{R_{1,t}}) > 0, x_{2,t} = \log(\frac{R_{2,t}}{r^*_{t} + \sizeofboundary_t}) > 0
\end{equation}
for all $t \in I$.  
\end{lemma}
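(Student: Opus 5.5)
The plan is to apply the implicit function theorem to the defining relation from Lemma \ref{Analysis_of_r^*_in_2D}. For each $t \in I$, $r^*_t$ is characterized as the unique solution in $[R_{1,t}, R_{2,t}-\sizeofboundary_t)$ of $x_{1,t} = \frac{M_{1,t}}{M_{2,t}}\, x_{2,t}$, i.e. of $\phi_{R_{1,t},R_{2,t},\sizeofboundary_t,2}(r^*_t) = M_{1,t}/M_{2,t}$. Since $M_{1,t}/M_{2,t}\in(0,\infty)$ and $\phi(R_{1,t})=0$, $r^*_t$ lies in the open interval $(R_{1,t}, R_{2,t}-\sizeofboundary_t)$. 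Hence $x_{1,t}>0$ and $x_{2,t}>0$, which gives the positivity claims.

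Define
\[
F(t,r) := \frac{\log(r/R_{1,t})}{\log\bigl(R_{2,t}/(r+\sizeofboundary_t)\bigr)} - \frac{M_{1,t}}{M_{2,t}}
\]
on the open set $\{(t,r): t\in I,\ R_{1,t}<r<R_{2,t}-\sizeofboundary_t\}$, so that $F(t,r^*_t)=0$. A direct computation gives
\[
\partial_r F = \frac{1}{x_2 r} + \frac{x_1}{x_2^2(r+\sizeofboundary)} > 0 .
\]
The partial in $t$ is
\[
\partial_t F = -\frac{\dot R_1}{R_1 x_2} - \frac{x_1}{x_2^2}\Bigl(\frac{\dot R_2}{R_2} - \frac{\dot{\sizeofboundary}}{r+\sizeofboundary}\Bigr) - \frac{d}{dt}\frac{M_1}{M_2}.
\]
Setting $\frac{d}{dt}F(t,r^*_t)=\partial_tF+\partial_rF\cdot \dot r^*_t = 0$ and rearranging yields exactly \eqref{The_Main_Computational_Equation}.

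The main obstacle is justifying differentiability of $t\mapsto r^*_t$. The data are only assumed differentiable, not $C^1$, so the classical implicit function theorem does not apply directly. I would avoid it as follows.

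First, show continuity of $r^*_t$. The map $(t,r)\mapsto F(t,r)$ is continuous, and $F(t,\cdot)$ is strictly increasing with range $(-M_1/M_2,\infty)$ on the interval. So for any $\delta>0$, the values $F(t_0,r^*_{t_0}\pm\delta)$ have opposite signs, and by continuity in $t$ they keep those signs for $t$ near $t_0$. Hence $|r^*_t-r^*_{t_0}|<\delta$.

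Second, write
\[
0=F(t,r^*_t)-F(t_0,r^*_{t_0}) = [F(t,r^*_t)-F(t_0,r^*_t)] + [F(t_0,r^*_t)-F(t_0,r^*_{t_0})].
\]
Apply the mean value theorem in $r$ to the second bracket, where $\partial_rF(t_0,\cdot)$ is continuous and positive. For the first bracket, $F$ is an explicit composition of logs and quotients of the differentiable data, so its difference quotient in $t$, uniformly for $r$ near $r^*_{t_0}$, tends to $\partial_tF(t_0,r^*_{t_0})$. Combining these with the continuity from the first step, dividing by $t-t_0$ and letting $t\to t_0$ gives $\dot r^*_{t_0} = -\partial_tF/\partial_rF$. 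This is the same identity as above.
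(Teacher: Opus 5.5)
Your computation is the same as the paper's. The paper differentiates the identity $x_{1,t}/x_{2,t} = M_{1,t}/M_{2,t}$ with the quotient rule, using the total derivatives of $x_{1,t}$ and $x_{2,t}$. That is exactly your relation $\partial_t F + \partial_r F\,\frac{d}{dt}r^*_t = 0$ written out, and the rearrangement gives the same equation.

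Where you genuinely differ is the differentiability claim. The paper only says ``take the derivative on both sides,'' so it tacitly presupposes that $t\mapsto r^*_t$ is differentiable, which is part of what the lemma asserts. You prove it instead, and your two-step argument is sound:
\begin{itemize}
\item The sign-persistence argument gives continuity of $r^*_t$. Here $\delta$ must be small enough that $r^*_{t_0}\pm\delta$ stays inside $(R_{1,t_0}, R_{2,t_0}-\sizeofboundary_{t_0})$.
\item The splitting plus the mean value theorem in $r$ then gives the derivative, and $\partial_r F>0$ lets you divide.
\end{itemize}

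The one step you state loosely is the uniform convergence of the $t$-difference quotient. The precise statement is that $\frac{F(t,r)-F(t_0,r)}{t-t_0}\to \partial_t F(t_0,r)$ uniformly for $r$ near $r^*_{t_0}$. Continuity of $r\mapsto\partial_tF(t_0,r)$ and $r^*_t\to r^*_{t_0}$ then finish the argument. This uniform convergence holds for the following reasons:
\begin{itemize}
\item The only building block mixing $t$ and $r$ is $\log(r+\sizeofboundary_t)$. Its difference quotient equals $\frac{\sizeofboundary_t-\sizeofboundary_{t_0}}{t-t_0}\cdot\frac{1}{r+\eta}$ with $\eta$ between $\sizeofboundary_t$ and $\sizeofboundary_{t_0}$, so it converges uniformly in $r$.
\item The blocks $\log R_{1,t}$, $\log R_{2,t}$ and $M_{1,t}/M_{2,t}$ depend on $t$ alone.
\item The denominator $x_2$ stays bounded away from zero near $(t_0,r^*_{t_0})$ by joint continuity, so the uniformity passes through the quotient.
\end{itemize}

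What your route buys is a complete proof under merely pointwise differentiable data, which is what the theorem assumes. In that setting the classical implicit function theorem, which needs $C^1$ dependence, is not directly available. The paper's route buys brevity, at the cost of leaving the differentiability assertion unjustified.
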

\begin{proof}
We recall that by Theorem \ref{Main_Result_1} (2), one has 
\begin{equation}
    \frac{x_{1,t}}{x_{2,t}} - \frac{M_{1,t}}{M_{2,t}} = 0
\end{equation}
for all $t \in I$. Take the derivative on both sides of the equation with respect to $t$, and we obtain that 
\begin{equation}
\label{Derivative_on_Both_Sides_Dimension_Two}
    \frac{(\frac{dx_{1,t}}{dt})x_{2,t} - x_{1,t}(\frac{x_{2,t}}{dt})}{x_{2,t}^2} - \frac{d}{dt}\left(\frac{M_{1,t}}{M_{2,t}}\right)  = 0.
\end{equation}
By a direct computation, 
\begin{equation}
\label{Derivative_of_x_1,t}
\begin{array}{rl}
\begin{split}
\frac{d}{dt}x_{1,t} & = \frac{1}{\left(\frac{r^*_t}{R_{1,t}}\right)}\frac{d}{dt}\left(\frac{r^*_t}{R_{1,t}}\right)
\\ &= \frac{R_{1,t}}{r^*_t}\frac{(\frac{d}{dt}r^*_t)(R_{1,t}) - (r^*_t)(\frac{d}{dt}R_{1,t})}{R_{1,t}^2}
\\ &= \frac{(\frac{d}{dt}r^*_t)(R_{1,t}) - (r^*_t)(\frac{d}{dt}R_{1,t})}{r^*_tR_{1,t}}
\end{split}
\end{array}
\end{equation} 
Similarly, by a direct computation, 
\begin{equation}
\label{Derivative_of_x_2,t}
\begin{split}
\frac{d}{dt}x_{2,t} = \frac{(\frac{d}{dt}R_{2,t})(r^*_t + \sizeofboundary_t) - (R_{2,t})(\frac{d}{dt}(r^*_t + \sizeofboundary_t))}{R_{2,t}(r^*_t + \sizeofboundary_t)}
\end{split}
\end{equation}
Now plug \eqref{Derivative_of_x_1,t} and \eqref{Derivative_of_x_2,t} into \eqref{Derivative_on_Both_Sides_Dimension_Two}, and we obtain 
\begin{equation}
    \frac{(\frac{(\frac{d}{dt}r^*_t)(R_{1,t}) - (r^*_t)(\frac{d}{dt}R_{1,t})}{r^*_tR_{1,t}})x_{2,t} - x_{1,t}(\frac{(\frac{d}{dt}R_{2,t})(r^*_t + \sizeofboundary_t) - (R_{2,t})(\frac{d}{dt}(r^*_t + \sizeofboundary_t))}{R_{2,t}(r^*_t + \sizeofboundary_t)})}{x_{2,t}^2} - \frac{d}{dt}\left(\frac{M_{1,t}}{M_{2,t}}\right)  = 0. \nonumber 
\end{equation}
Simplify the left hand side of the above equality, and we obtain 
\begin{equation}
    \frac{1}{x_{2,t}}(\frac{\frac{d}{dt}r^*_t}{r^*_t} - \frac{\frac{d}{dt}R_{1,t}}{R_{1,t}}) - \frac{x_{1,t}}{x_{2,t}^2}(\frac{\frac{d}{dt}R_{2,t}}{R_{2,t}} - \frac{\frac{d}{dt}r^*_t + \frac{d}{dt}\sizeofboundary_t}{r^*_t + \sizeofboundary_t}) - \frac{d}{dt}\left(\frac{M_{1,t}}{M_{2,t}}\right)  = 0. \nonumber 
\end{equation}
Rearrange the above equation, and we obtain
\begin{equation}
    \left(\frac{d}{dt}r^*_t\right)\left(\frac{1}{x_{2,t}r^*_t}\right) + \frac{x_{1,t}}{x_{2,t}^2}\frac{\frac{d}{dt}r^*_t + \frac{d}{dt}\sizeofboundary_{t}}{r^*_t + \sizeofboundary_{t}} = \frac{d}{dt}\left(\frac{M_{1,t}}{M_{2,t}}\right) + \frac{\frac{d}{dt}R_{1,t}}{R_{1,t}}\frac{1}{x_{2,t}} + \frac{x_{1,t}}{x_{2,t}^2}\frac{\frac{d}{dt}R_{2,t}}{R_{2,t}}. \nonumber
\end{equation}
                                                                                                  
By a simple calculation, we have
\begin{equation}
    \left(\frac{d}{dt}r^*_t\right)\left(\frac{1}{x_{2,t}r^*_t} + \frac{x_{1,t}}{x_{2,t}^2(r^*_t + \sizeofboundary_t)}\right) = \frac{d}{dt}\left(\frac{M_{1,t}}{M_{2,t}}\right) + \frac{\frac{d}{dt}R_{1,t}}{R_{1,t}}\frac{1}{x_{2,t}} + \frac{x_{1,t}}{x_{2,t}^2}\frac{\frac{d}{dt}R_{2,t}}{R_{2,t}} - \frac{x_{1,t}\frac{d}{dt}\sizeofboundary_t}{x_{2,t}^2(r^*_t + \sizeofboundary_t)}. \nonumber
\end{equation}
This yields \eqref{The_Main_Computational_Equation}, as desired.
\end{proof}

As an immediate application of Lemma \ref{Derivative_of_r^*_t}, we have the following result.

\begin{corollary}
\label{r^*_t_is_constant_if_ratio_is_constant}

Assume that $n = 2$, and let the setting be as above. Suppose that $R_{1,t}, R_{2,t}$, and $\sizeofboundary_t$ are constants in $t \in I$. Then $r^*_t$ is constant in $t \in I$ if $\frac{M_{1,t}}{M_{2,t}}$ is constant in $t \in I$.
\end{corollary}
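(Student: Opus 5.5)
The plan is to read the corollary off the differential identity \eqref{The_Main_Computational_Equation} of Lemma \ref{Derivative_of_r^*_t}. As a cross-check, we will also get it from the uniqueness statement in Theorem \ref{Main_Result_1} (2).

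First, the hypotheses make $R_{1,t}$, $R_{2,t}$ and $\sizeofboundary_t$ constant on $I$. Hence $\frac{d}{dt}R_{1,t}=\frac{d}{dt}R_{2,t}=\frac{d}{dt}\sizeofboundary_t=0$. Since $\frac{M_{1,t}}{M_{2,t}}$ is constant, $\frac{d}{dt}\bigl(\frac{M_{1,t}}{M_{2,t}}\bigr)=0$ as well. Substituting these into \eqref{The_Main_Computational_Equation}, the whole right-hand side vanishes. This leaves
\begin{equation*}
\left(\frac{d}{dt}r^*_t\right)\left(\frac{1}{x_{2,t}r^*_t} + \frac{x_{1,t}}{x_{2,t}^2(r^*_t + \sizeofboundary_t)}\right) = 0 .
\end{equation*}

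Second, we show the bracketed factor is strictly positive. We have $r^*_t>0$, $\sizeofboundary_t>0$, and $x_{2,t}>0$ as asserted in Lemma \ref{Derivative_of_r^*_t}. We also have $x_{1,t}\ge 0$, because $r^*_t\ge R_{1,t}$. So the factor is at least $\frac{1}{x_{2,t}r^*_t}>0$. Therefore $\frac{d}{dt}r^*_t=0$ on the interval $I$, and by the mean value theorem $r^*_t$ is constant. Differentiability of $t\mapsto r^*_t$ is already part of the conclusion of Lemma \ref{Derivative_of_r^*_t}, so nothing more is needed there.

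The only delicate point is the strict positivity of $x_{2,t}$, which requires $r^*_t<R_{2,t}-\sizeofboundary_t$. This holds because $\frac{M_{1,t}}{M_{2,t}}$ is a finite positive number and $\phi_{R_1,R_2,\sizeofboundary,2}\to\infty$ at $R_2-\sizeofboundary$ by Lemma \ref{Phi_is_Strictly_Increasing}.

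Alternatively, and more directly, $r^*_t=\phi^{-1}_{R_{1,t},R_{2,t},\sizeofboundary_t,2}\bigl(\frac{M_{1,t}}{M_{2,t}}\bigr)$ by Theorem \ref{Main_Result_1} (2) together with the bijectivity in Lemma \ref{Phi_is_Strictly_Increasing}. Here every argument is independent of $t$, so $r^*_t$ is constant without any differentiation. I would present the first argument, since the corollary is billed as an application of Lemma \ref{Derivative_of_r^*_t}, and mention the second as a remark.
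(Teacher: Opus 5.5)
Your proposal is correct and follows the paper's proof. The paper likewise substitutes $\frac{d}{dt}R_{1,t}=\frac{d}{dt}R_{2,t}=\frac{d}{dt}\sizeofboundary_t=\frac{d}{dt}\bigl(\frac{M_{1,t}}{M_{2,t}}\bigr)=0$ into the identity of Lemma \ref{Derivative_of_r^*_t} to get $\frac{d}{dt}r^*_t=0$, remarks that the result can also be read off from Theorem \ref{Main_Result_1} (2) as in your alternative, and leaves implicit the strict positivity of $\frac{1}{x_{2,t}r^*_t}+\frac{x_{1,t}}{x_{2,t}^2(r^*_t+\sizeofboundary_t)}$ (including $x_{2,t}>0$), which you verify.
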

\begin{proof}
One can see this immediately from Theorem \ref{Main_Result_1} (2). Here we deduce the result from Lemma \ref{Derivative_of_r^*_t}. By Lemma \ref{Derivative_of_r^*_t}, we have
\begin{equation}
    \left(\frac{dr^*_t}{dt}\right)\left(\frac{1}{x_{2,t}r^*_t} + \frac{x_{1,t}}{x_{2,t}^2r^*_t}\right) = \frac{d}{dt}\left(\frac{M_{1,t}}{M_{2,t}}\right) = 0,
\end{equation}
so $r^*_t$ is constant in $t \in I$. 
\end{proof}

Corollary \ref{r^*_t_is_constant_if_ratio_is_constant} says that for fixed $R_{1,t}, R_{2,t}$, and $\sizeofboundary_t$, the radius of the free boundary $r^*_t$ is only determined by the ratio $\frac{M_{1,t}}{M_{2,t}}$. In particular, if $M_{1,t}$ and $M_{2,t}$ changes in $t \in I$ in such a way that $\frac{M_{1,t}}{M_{2,t}}$ is constant in $t \in I$, $r^*_t$ is constant, as shown in the next example.

\begin{example}

We take $I = (0,\infty)$, and assume that $R_{1,t} = R_1, R_{2,t} = R_2$, and $\sizeofboundary_t = \sizeofboundary$ for all $t \in (0, \infty)$. Also, we take $M_{1,t} = M_{2,t} = t^2 + 1$ for all $t \in (0, \infty)$. By Example \ref{Example_in_Which_M_1_Equals_M_2} and Corollary \ref{r^*_t_is_constant_if_ratio_is_constant}, it follows that 
\begin{equation}
r^*_t = -\frac{1}{2}\sizeofboundary + \frac{1}{2}\sqrt{\sizeofboundary^2 + 4R_1R_2}.
\end{equation}
for all $t \in (0, \infty)$. Therefore, although $M_{1,t}$ and $M_{2,t}$ increases in $t \in (0,\infty)$, the radius $r^*_t$ remains invariant. 
\end{example}

Our next goal is to study the conditions on $R_{1,t}$ and $R_{2,t}$ for which $r^*_t$ remains invariant. We have the following result.

\begin{proposition}
\label{Equilibrium_for_R_1_and_R_2}

Assume that $n = 2$, and let the setting be as above. Assume $\frac{M_{1,t}}{M_{2,t}}$ is a constant $\alpha$ for all $t \in I$, and that $\sizeofboundary_t$ is also constant in $t \in I$. Then $r^*_t$ is constant in $t \in I$ if 
\begin{equation}
\label{Key_Equation_for_R_1_and_R_2}
    \frac{\frac{d}{dt}(\log R_{1,t})}{\frac{d}{dt}(\log R_{2,t})} = -\alpha
\end{equation}
for all $t \in I$.
\end{proposition}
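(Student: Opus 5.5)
The plan is to work directly from Lemma \ref{Derivative_of_r^*_t} and show that, under the stated hypotheses, the right-hand side of \eqref{The_Main_Computational_Equation} vanishes identically on $I$. Since $\frac{M_{1,t}}{M_{2,t}} = \alpha$ is constant, the term $\frac{d}{dt}\left(\frac{M_{1,t}}{M_{2,t}}\right)$ is zero. Since $\sizeofboundary_t$ is constant, the last term $-\frac{x_{1,t}\frac{d}{dt}\sizeofboundary_t}{x_{2,t}^2(r^*_t+\sizeofboundary_t)}$ also vanishes. The right-hand side therefore reduces to
\begin{equation*}
\frac{1}{x_{2,t}}\frac{d}{dt}(\log R_{1,t}) + \frac{x_{1,t}}{x_{2,t}^2}\frac{d}{dt}(\log R_{2,t}).
\end{equation*}

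Next I will use the defining relation from Theorem \ref{Main_Result_1} (2), namely $\frac{x_{1,t}}{x_{2,t}} = \frac{M_{1,t}}{M_{2,t}} = \alpha$, to replace $\frac{x_{1,t}}{x_{2,t}^2}$ by $\frac{\alpha}{x_{2,t}}$. The reduced right-hand side then becomes
\begin{equation*}
\frac{1}{x_{2,t}}\left(\frac{d}{dt}(\log R_{1,t}) + \alpha\,\frac{d}{dt}(\log R_{2,t})\right).
\end{equation*}
Hypothesis \eqref{Key_Equation_for_R_1_and_R_2} gives $\frac{d}{dt}(\log R_{1,t}) = -\alpha \frac{d}{dt}(\log R_{2,t})$. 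Implicit in that hypothesis is that the denominator $\frac{d}{dt}(\log R_{2,t})$ is nonzero; in any case, what I actually use is the cross-multiplied form, which also holds trivially if both derivatives vanish. With this substitution the bracket vanishes, so the right-hand side is identically zero.

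Finally, the coefficient of $\frac{d}{dt}r^*_t$ on the left of \eqref{The_Main_Computational_Equation} is strictly positive. Indeed, $r^*_t>0$, $\sizeofboundary_t>0$, and Lemma \ref{Derivative_of_r^*_t} gives $x_{1,t}>0$ and $x_{2,t}>0$. Hence $\frac{d}{dt}r^*_t = 0$ on the interval $I$, and the mean value theorem shows that $r^*_t$ is constant.

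The only delicate point is the positivity of $x_{1,t}$ and $x_{2,t}$, which is needed both to divide by $x_{2,t}$ and to see that the coefficient is nonzero. This is asserted in Lemma \ref{Derivative_of_r^*_t} and I take it from there. It requires $r^*_t$ to lie in the interior $(R_{1,t}, R_{2,t}-\sizeofboundary_t)$, which holds because $\alpha\in(0,\infty)$ and $\phi$ is a bijection onto $[0,\infty)$ by Lemma \ref{Phi_is_Strictly_Increasing}.
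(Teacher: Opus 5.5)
Your proposal is correct and follows essentially the same route as the paper. Both arguments reduce \eqref{The_Main_Computational_Equation} using the constancy of $\frac{M_{1,t}}{M_{2,t}}$ and $\sizeofboundary_t$, then invoke $\frac{x_{1,t}}{x_{2,t}} = \alpha$ together with the hypothesis to make the right-hand side vanish. The paper cross-multiplies the hypothesis first, while you factor $\frac{1}{x_{2,t}}$ out of the right-hand side first. Your explicit check that the coefficient of $\frac{d}{dt}r^*_t$ is strictly positive, since $r^*_t \in (R_{1,t}, R_{2,t}-\sizeofboundary_t)$ when $\alpha \in (0,\infty)$, supplies a step the paper leaves implicit.
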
   
\begin{proof}
Observe that by Theorem \ref{Main_Result_1} (2), $\alpha = \frac{M_{1,t}}{M_{2,t}} = \frac{x_{1,t}}{x_{2,t}}$, where 
\begin{equation}
    x_{1,t} = \log(\frac{r^*_t}{R_{1,t}}) > 0, x_{2,t} = \log(\frac{R_{2,t}}{r^*_{t} + \sizeofboundary_t}) > 0
\end{equation}
for all $t \in I$. Therefore, we have from \eqref{Key_Equation_for_R_1_and_R_2} that 
\begin{equation}
    \frac{\frac{d}{dt}(\log R_{1,t})}{\frac{d}{dt}(\log R_{2,t})} = -\frac{x_{1,t}}{x_{2,t}}.
\end{equation}
Therefore, 
\begin{equation}
    \frac{\frac{1}{R_{1,t}}\frac{d}{dt}(R_{1,t})}{\frac{1}{R_{2,t}}\frac{d}{dt}( R_{2,t})} = -\frac{x_{1,t}}{x_{2,t}},
\end{equation}
so 
\begin{equation}
\label{Key_Equation_Relating_Derivative_of_R_1_and_R_2_Dimension_Two}
\frac{x_{2,t}}{R_{1,t}}\left(\frac{d}{dt}R_{1,t}\right) + \frac{x_{1,t}}{R_{2,t}}\left(\frac{d}{dt}R_{2,t}\right) = 0.
\end{equation}
Divide the equation \eqref{Key_Equation_Relating_Derivative_of_R_1_and_R_2_Dimension_Two} by $x_{2,t}^2$, and we obtain 
\begin{equation}
\label{Key_Equation_Relating_Derivative_of_R_1t_and_R_2t_Dimension_two}
\frac{1}{R_{1,t}x_{2,t}}\left(\frac{dR_{1,t}}{dt}\right) + \frac{x_{1,t}}{R_{2,t}x_{2,t}^2}\left(\frac{dR_{2,t}}{dt}\right) = 0.
\end{equation}
Now recall that $\frac{M_{1,t}}{M_{2,t}}$ is a constant $\alpha$ for all $t \in I$, and that $\sizeofboundary_t$ is a constant in $t \in I$, so by Lemma \ref{Derivative_of_r^*_t} we have
\begin{equation}
\label{Application_of_Lemma_4_1_to_Proposition_4_4_Dimension_Two}
     \left(\frac{d}{dt}r^*_t\right)\left(\frac{1}{x_{2,t}r^*_t} + \frac{x_{1,t}}{x_{2,t}^2(r^*_t + \sizeofboundary_t)}\right) = \frac{\frac{d}{dt}R_{1,t}}{R_{1,t}}\frac{1}{x_{2,t}} + \frac{x_{1,t}}{x_{2,t}^2}\frac{\frac{d}{dt}R_{2,t}}{R_{2,t}}.
\end{equation}
Plug \eqref{Key_Equation_Relating_Derivative_of_R_1t_and_R_2t_Dimension_two} into \eqref{Application_of_Lemma_4_1_to_Proposition_4_4_Dimension_Two}, and we obtain 
\begin{equation}
    \left(\frac{dr^*_t}{dt}\right) = 0,
\end{equation}
so $r^*_t$ is a constant in $t \in I$.
\end{proof}

We consider a related example.

\begin{example}

Consider the case where $n = 2$, $I = (0,\frac{1}{2}\ln 2)$, $M_{1,t} = 2$, $M_{2,t} = 1$, $R_{1,t} = (2\sizeofboundary)e^{-2t}$, $R_{2,t} = (5\sizeofboundary)e^t$, and $\sizeofboundary_t = \sizeofboundary$ for all $t \in I = (0,\frac{1}{2}\ln 2)$. Clearly,
\begin{equation}
    \frac{\frac{d}{dt}(\log R_{1,t})}{\frac{d}{dt}(\log (R_{2,t})} = \frac{-2}{1} = -\frac{M_{1,t}}{M_{2,t}}
\end{equation}
for all $t \in I$. By Proposition \ref{Equilibrium_for_R_1_and_R_2}, it follows that $r^*_t$ is constant for all $t \in I = (0,\frac{1}{2}\ln 2)$.

 
\end{example}

We are also interested in studying $\frac{d}{dt}r^*_t$ in terms of $\frac{d}{dt}\sizeofboundary_t$, for fixed $M_{1,t}, M_{2,t}, R_{1,t}, R_{2,t}$. We have the following result.
\begin{corollary}
\label{Relation_between_logr^*_t_and_R_t}
Assume that $n = 2$, and let the setting be as above. Assume that $R_{1,t}, R_{2,t}$ are constants in $t \in I$, and suppose that $\frac{M_{1,t}}{M_{2,t}}$ is a constant $\alpha$ for all $t \in I$. Then we have 
\begin{equation}
\label{Key_Equation_Relating_Derivative_r*t_R_t}
    \frac{dr^*_t}{dt} = -\frac{\alpha}{(1+\alpha) + \frac{\sizeofboundary_t}{r^*_t}}\frac{d\sizeofboundary_t}{dt}
\end{equation}
\end{corollary}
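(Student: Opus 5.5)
The plan is to specialize Lemma \ref{Derivative_of_r^*_t}. Differentiability of $t\mapsto r^*_t$ is already part of that lemma, so only the algebra needs to be carried out. Since $R_{1,t},R_{2,t}$ are constant, $\frac{d}{dt}R_{1,t}=\frac{d}{dt}R_{2,t}=0$. Since $\frac{M_{1,t}}{M_{2,t}}\equiv\alpha$, its derivative also vanishes. Equation \eqref{The_Main_Computational_Equation} therefore reduces to
\begin{equation*}
\left(\frac{dr^*_t}{dt}\right)\left(\frac{1}{x_{2,t}r^*_t} + \frac{x_{1,t}}{x_{2,t}^2(r^*_t + \sizeofboundary_t)}\right) = - \frac{x_{1,t}}{x_{2,t}^2(r^*_t + \sizeofboundary_t)}\frac{d\sizeofboundary_t}{dt}.
\end{equation*}

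Next I use Theorem \ref{Main_Result_1} (2), exactly as in the proof of Proposition \ref{Equilibrium_for_R_1_and_R_2}, to get $x_{1,t}/x_{2,t}=\alpha$, where $x_{2,t}>0$. Multiplying the identity above by $x_{2,t}$ gives
\begin{equation*}
\frac{dr^*_t}{dt}\left(\frac{1}{r^*_t} + \frac{\alpha}{r^*_t+\sizeofboundary_t}\right) = -\frac{\alpha}{r^*_t+\sizeofboundary_t}\frac{d\sizeofboundary_t}{dt}.
\end{equation*}
The coefficient on the left is positive, so I can solve for $\frac{dr^*_t}{dt}$. Multiplying numerator and denominator by $r^*_t+\sizeofboundary_t$ yields
\begin{equation*}
\frac{dr^*_t}{dt} = -\frac{\alpha}{\frac{r^*_t+\sizeofboundary_t}{r^*_t}+\alpha}\,\frac{d\sizeofboundary_t}{dt} = -\frac{\alpha}{(1+\alpha)+\frac{\sizeofboundary_t}{r^*_t}}\,\frac{d\sizeofboundary_t}{dt},
\end{equation*}
which is \eqref{Key_Equation_Relating_Derivative_r*t_R_t}.

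The only potential obstacle is the strict positivity of $x_{1,t}$ and $x_{2,t}$, which is needed to divide. Positivity of $x_{2,t}$ is asserted in Lemma \ref{Derivative_of_r^*_t}: it holds because $r^*_t<R_{2,t}-\sizeofboundary_t$, which follows from Lemma \ref{Phi_is_Strictly_Increasing}, since $\phi\to\infty$ at that endpoint while $\alpha$ is finite. The identity itself uses only $x_{2,t}>0$, so nothing further is needed.
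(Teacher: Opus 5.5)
Your proof is correct and matches the paper's argument: specialize Lemma~\ref{Derivative_of_r^*_t} with $R_{1,t}$, $R_{2,t}$ and $\frac{M_{1,t}}{M_{2,t}}$ held constant, substitute $\frac{x_{1,t}}{x_{2,t}}=\alpha$, and simplify. Your one small improvement is that you solve directly for $\frac{dr^*_t}{dt}$ after dividing by a positive coefficient, whereas the paper forms the quotient $\frac{dr^*_t/dt}{d\sizeofboundary_t/dt}$, which tacitly assumes $\frac{d\sizeofboundary_t}{dt}\neq 0$.
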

\begin{proof}
By Lemma \ref{Derivative_of_r^*_t}, we have 
\begin{equation}
\left(\frac{d}{dt}r^*_t\right)\left(\frac{1}{x_{2,t}r^*_t} + \frac{x_{1,t}}{x_{2,t}^2(r^*_t + \sizeofboundary_t)}\right) = - \frac{x_{1,t}\frac{d}{dt}\sizeofboundary_t}{x_{2,t}^2(r^*_t + \sizeofboundary_t)},
\end{equation}
where $x_{1,t} = \log(\frac{r^*_t}{R_{1,t}}) > 0, x_{2,t} = \log(\frac{R_{2,t}}{r^*_{t} + \sizeofboundary_t}) > 0$.
Thus, 
\begin{equation}
\label{Equation_for_Computing_the_Ratio_of_Quotient}
    \frac{\frac{d}{dt}r^*_t}{\frac{d}{dt}\sizeofboundary_t} = \frac{-\frac{x_{1,t}}{x_{2,t}^2}\frac{1}{r^*_t + \sizeofboundary_t}}{\frac{1}{x_{2,t}r^*_t} + \frac{x_{1,t}}{x_{2,t}^2(r^*_t + \sizeofboundary_t)}}.
\end{equation}
Simplify the right hand side of \eqref{Equation_for_Computing_the_Ratio_of_Quotient} and use the identity $\frac{x_{1,t}}{x_{2,t}} = \frac{M_{1,t}}{M_{2,t}} = \alpha$, and we obtain \eqref{Key_Equation_Relating_Derivative_r*t_R_t}, as desired.
\end{proof}

By combining all the results above, we can now prove the second main result.

{\bf Proof of Theorem \ref{Main_Result_2}:}

This follows from Lemma \ref{Derivative_of_r^*_t}, Corollary \ref{r^*_t_is_constant_if_ratio_is_constant}, and Proposition \ref{Equilibrium_for_R_1_and_R_2}.
$\Box$

\section{One Parameter Families of Annular Domains and Boundary Data in Higher Dimensions}

This section is dedicated to the proof of Theorem \ref{Main_Result_3}. Recall that we study the one-parameter family of elliptic system \eqref{main_problems_rewritten_one_parameter_family}. We now consider the case in which $n \geq 3$.   

We assume that $(u^{\epsilon}_{1,t}, u^{\epsilon}_{2,t}) \in C^2(\Omega_t; \mathbb{R}^2) \cap C^0(\overline{\Omega_t}; \mathbb{R}^2)$ is a solution of \eqref{main_problems_rewritten_one_parameter_family} for any $\epsilon > 0, t \in I$, and suppose that $\{(u^{\epsilon}_{1,t}, u^{\epsilon}_{2,t})\}_{\epsilon > 0}$ converges along a subsequence as $\epsilon \rightarrow 0^+$ to $(u_{1,t}, u_{2,t})$. We also assume that the mapping $I \rightarrow \mathbb{R}$ given by $t \mapsto R_{1,t}$, $t \mapsto R_{2,t}$, $t \mapsto \sizeofboundary_{t}$, $t \mapsto {M}_{1,t}$, $t \mapsto {M}_{2,t}$ are all differentiable on $I$. Therefore, by Theorem \ref{Main_Result_1}, for any $t \in I$, there is a unique $r^* = r^*_{t} \in [R_{1,t}, R_{2,t} - \sizeofboundary_t]$ such that 
\begin{equation}
\begin{split}
\partial \{u_{1,t} > 0\} \cap \Omega_t = \{\norm{x} = r^*_{t}\},
\\ \partial \{u_{2,t} > 0\} \cap \Omega_t = \{\norm{x} = r^*_{t} + \sizeofboundary_t\}.
\end{split}
\end{equation}

We first compute the derivative of $r^*_t$. 

\begin{lemma}
\label{Derivative_of_r^*_t_Higher_Dimensions}

Assume that $n \geq 3$, and let the setting be as above. Then the mapping $I \rightarrow \mathbb{R}$ given by $t \mapsto r^*_t$ is also differentiable on $I$, with 
\begin{equation}
\label{The_Main_Computational_Equation_Higher_Dimensions}
\begin{split}
\left(\frac{dr^*_t}{dt}\right)(n-2)[\frac{1}{x_{2,t}}(r^*_t)^{1-n} + \frac{x_{1,t}}{x_{2,t}^2}(r^*_t + \sizeofboundary_t)^{1-n}] = \frac{d}{dt}\left(\frac{M_{1,t}}{M_{2,t}}\right) 
\\+ (n-2)\frac{1}{x_{2,t}}R^{1-n}_{1,t}\left(\frac{dR_{1,t}}{dt}\right) + (n-2)\frac{x_{1,t}}{x_{2,t}^2}R^{1-n}_{2,t}\left(\frac{dR_{2,t}}{dt}\right) - (n-2)\frac{x_{1,t}}{x_{2,t}^2}(r^*_t + \sizeofboundary_t)^{1-n}\left(\frac{d\sizeofboundary_t}{dt}\right),
\end{split}
\end{equation}
where 
\begin{equation}
    x_{1,t} = R_{1,t}^{2-n} - (r^*_t)^{2-n} > 0, x_{2,t} = (r^*_t + \sizeofboundary_t)^{2-n} - R_{2,t}^{2-n} > 0
\end{equation}
for all $t \in I$.  
\end{lemma}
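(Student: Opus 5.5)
We follow the two-dimensional argument of Lemma \ref{Derivative_of_r^*_t}, with one addition: we first justify that $t\mapsto r^*_t$ is differentiable, which the $n=2$ proof took for granted. By Theorem \ref{Main_Result_1} (3), for every $t\in I$ we have
\begin{equation*}
F(t,r^*_t)=0,\qquad F(t,r):=\frac{R_{1,t}^{2-n}-r^{2-n}}{(r+\sizeofboundary_t)^{2-n}-R_{2,t}^{2-n}}-\frac{M_{1,t}}{M_{2,t}},
\end{equation*}
where $r$ ranges over $(R_{1,t},R_{2,t}-\sizeofboundary_t)$.

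For differentiability, note that $r^*_t$ lies strictly inside this interval, because $0<M_{1,t}/M_{2,t}<\infty$ and the endpoint values of $\phi_{R_{1,t},R_{2,t},\sizeofboundary_t,n}$ are $0$ and $\infty$ (Lemma \ref{Phi_is_Strictly_Increasing}). Hence $x_{1,t}>0$ and $x_{2,t}>0$. The function $F$ is $C^1$ in $r$, and
\begin{equation*}
\partial_rF=(n-2)\Big[\tfrac{1}{x_{2,t}}(r^*_t)^{1-n}+\tfrac{x_{1,t}}{x_{2,t}^2}(r^*_t+\sizeofboundary_t)^{1-n}\Big]>0.
\end{equation*}
The data are only differentiable in $t$, not $C^1$, so the classical implicit function theorem does not apply directly. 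We avoid it as follows. Fix $t_0$ and apply the mean value theorem in $r$ to $F(t,r^*_t)-F(t,r^*_{t_0})=-\big(F(t,r^*_{t_0})-F(t_0,r^*_{t_0})\big)$. This gives
\begin{equation*}
(r^*_t-r^*_{t_0})\,\partial_rF(t,\xi_t)=-\big(F(t,r^*_{t_0})-F(t_0,r^*_{t_0})\big)
\end{equation*}
for some $\xi_t$ between $r^*_{t_0}$ and $r^*_t$. First we need continuity of $t\mapsto r^*_t$. Since $\phi^{-1}$ is strictly monotone and continuous, and $\phi$ depends continuously on the parameters, $r^*_t$ is continuous in $t$ (argued as in Lemma \ref{Phi_is_Strictly_Increasing} (3)). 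Then $\xi_t\to r^*_{t_0}$ and $\partial_rF(t,\xi_t)\to\partial_rF(t_0,r^*_{t_0})>0$. Dividing by $t-t_0$ and letting $t\to t_0$ gives differentiability, with $\frac{dr^*_t}{dt}=-\partial_tF/\partial_rF$.

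For the formula, write the identity as $x_{1,t}/x_{2,t}=M_{1,t}/M_{2,t}$ and differentiate by the quotient rule, exactly as in \eqref{Derivative_on_Both_Sides_Dimension_Two}. By the chain rule,
\begin{equation*}
\frac{dx_{1,t}}{dt}=(2-n)R_{1,t}^{1-n}\frac{dR_{1,t}}{dt}+(n-2)(r^*_t)^{1-n}\frac{dr^*_t}{dt},
\end{equation*}
\begin{equation*}
\frac{dx_{2,t}}{dt}=(2-n)(r^*_t+\sizeofboundary_t)^{1-n}\Big(\frac{dr^*_t}{dt}+\frac{d\sizeofboundary_t}{dt}\Big)-(2-n)R_{2,t}^{1-n}\frac{dR_{2,t}}{dt}.
\end{equation*}
Substituting these into $\frac{1}{x_{2,t}}\frac{dx_{1,t}}{dt}-\frac{x_{1,t}}{x_{2,t}^2}\frac{dx_{2,t}}{dt}=\frac{d}{dt}\big(\frac{M_{1,t}}{M_{2,t}}\big)$ and collecting the $\frac{dr^*_t}{dt}$ terms on the left yields \eqref{The_Main_Computational_Equation_Higher_Dimensions}. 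A sign check on each term: the $R_{1,t}$ term moves to the right with $+(n-2)$. The $R_{2,t}$ term enters as $-\frac{x_{1,t}}{x_{2,t}^2}\cdot(n-2)R_{2,t}^{1-n}\dot R_{2,t}$ on the left, so it becomes $+$ on the right. The $\sizeofboundary_t$ term enters as $+\frac{x_{1,t}}{x_{2,t}^2}(n-2)(r^*_t+\sizeofboundary_t)^{1-n}\dot\sizeofboundary_t$ on the left, so it becomes $-$ on the right.

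The main obstacle is the differentiability step, because the parameters are only differentiable rather than $C^1$. The mean-value argument above, combined with continuity of $r^*_t$, is how we handle it. The remaining algebra is routine.
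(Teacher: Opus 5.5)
Your proposal is correct and follows the paper's proof. Like the paper, you differentiate the identity $x_{1,t}/x_{2,t}=M_{1,t}/M_{2,t}$ from Theorem~\ref{Main_Result_1}~(3) by the quotient rule, substitute the same chain-rule expressions for $\frac{d}{dt}x_{1,t}$ and $\frac{d}{dt}x_{2,t}$, and solve for $\frac{dr^*_t}{dt}$; your sign bookkeeping reproduces the stated formula.

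What you add is a justification that $t\mapsto r^*_t$ is differentiable, which the paper takes for granted, and your mean-value argument for it is sound. Its continuity input is most cleanly obtained as follows: by continuity of $F$ in $t$ and strict monotonicity in $r$, the inequality $F(t,r^*_{t_0}-\delta)<0<F(t,r^*_{t_0}+\delta)$ persists for $t$ near $t_0$, which traps $r^*_t$. The monotone-sequence argument of Lemma~\ref{Phi_is_Strictly_Increasing}~(3) does not transfer verbatim when all parameters vary at once.
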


\begin{proof}
We recall that by Theorem \ref{Main_Result_1} (2), one has 
\begin{equation}
    \frac{x_{1,t}}{x_{2,t}} - \frac{M_{1,t}}{M_{2,t}} = 0
\end{equation}
for all $t \in I$. Take the derivative on both sides of the equation with respect to $t$, and we obtain that 
\begin{equation}
\label{Derivative_on_Both_Sides}
    \frac{(\frac{dx_{1,t}}{dt})x_{2,t} - x_{1,t}(\frac{x_{2,t}}{dt})}{x_{2,t}^2} - \frac{d}{dt}\left(\frac{M_{1,t}}{M_{2,t}}\right)  = 0.
\end{equation}
By a direct computation, 

\begin{equation}
\label{Derivative_of_x_1,t_Higher_Dimensions}
\begin{split}
\frac{d}{dt}x_{1,t} = (2-n)R_{1,t}^{1-n}(\frac{d}{dt}R_{1,t}) - (2-n)(r^*_t)^{1-n}(\frac{d}{dt}r^*_t),
\end{split}
\end{equation}
\begin{equation}
\label{Derivative_of_x_2,t_Higher_Dimensions}
\frac{d}{dt}x_{2,t} = (2-n)(r^*_t + \sizeofboundary_t)^{1-n}(\frac{d}{dt}r^*_t + \frac{d}{dt}\sizeofboundary_t) - (2-n)R_{2,t}^{1-n}(\frac{d}{dt}R_{2,t}).    
\end{equation}

Plug \eqref{Derivative_of_x_1,t_Higher_Dimensions} and \eqref{Derivative_of_x_2,t_Higher_Dimensions} into \eqref{Derivative_on_Both_Sides}, and solve for $\frac{dr^*_t}{dt}$. We obtain \eqref{The_Main_Computational_Equation_Higher_Dimensions}, as desired. 

\end{proof}

As an immediate application of Lemma \ref{Derivative_of_r^*_t_Higher_Dimensions}, we have the following result.

\begin{corollary}
\label{r^*_t_is_constant_if_ratio_is_constant_Higher_Dimensions}

Assume that $n \geq 3$, and let the setting be as above. Suppose that $R_{1,t}, R_{2,t}$, and $\sizeofboundary_t$ are constants in $t \in I$. Then $r^*_t$ is constant in $t \in I$ if $\frac{M_{1,t}}{M_{2,t}}$ is constant in $t \in I$.
\end{corollary}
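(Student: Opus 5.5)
The plan is to mirror the proof of Corollary \ref{r^*_t_is_constant_if_ratio_is_constant} in the two-dimensional case, now using Lemma \ref{Derivative_of_r^*_t_Higher_Dimensions}. I would also note the more direct route via Theorem \ref{Main_Result_1} (3). There, $r^*_t = \phi^{-1}_{R_1,R_2,\sizeofboundary,n}(M_{1,t}/M_{2,t})$, and the inverse function is well defined by Lemma \ref{Phi_is_Strictly_Increasing}. So if $R_{1,t},R_{2,t},\sizeofboundary_t$ and the ratio are all constant, then $r^*_t$ is constant immediately.

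For the derivative-based argument, first apply Lemma \ref{Derivative_of_r^*_t_Higher_Dimensions}. Since $R_{1,t}$, $R_{2,t}$ and $\sizeofboundary_t$ are constant, their $t$-derivatives vanish, so the last three terms on the right-hand side of \eqref{The_Main_Computational_Equation_Higher_Dimensions} drop out. Since $M_{1,t}/M_{2,t}$ is constant, the remaining term $\frac{d}{dt}(M_{1,t}/M_{2,t})$ also vanishes. We are left with
\begin{equation*}
\left(\frac{dr^*_t}{dt}\right)(n-2)\left[\frac{1}{x_{2,t}}(r^*_t)^{1-n} + \frac{x_{1,t}}{x_{2,t}^2}(r^*_t + \sizeofboundary)^{1-n}\right] = 0 .
\end{equation*}

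Next, check that the bracket is strictly positive, so it can be divided out. We have $n-2>0$, $r^*_t>0$ and $r^*_t+\sizeofboundary>0$. By Lemma \ref{Derivative_of_r^*_t_Higher_Dimensions}, $x_{2,t}>0$ and $x_{1,t}>0$. Strictly, $x_{1,t}\ge 0$ suffices: it can vanish only when $r^*_t=R_1$, which would force $M_1=0$, and even then the first summand alone is positive. Hence $\frac{dr^*_t}{dt}=0$ on the open interval $I$, and the mean value theorem gives that $r^*_t$ is constant.

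The only point needing care is this positivity of the coefficient, together with the fact that $x_{2,t}\neq 0$ (that is, $r^*_t < R_2-\sizeofboundary$). The latter holds because $M_1/M_2$ is finite and $\phi_{R_1,R_2,\sizeofboundary,n}$ blows up at $R_2-\sizeofboundary$ by Lemma \ref{Phi_is_Strictly_Increasing}. Everything else is bookkeeping.
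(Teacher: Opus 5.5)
Your proposal is correct and follows the paper's approach: the paper also remarks that the result is immediate from Theorem~\ref{Main_Result_1}, then deduces $\frac{dr^*_t}{dt}=0$ from the higher-dimensional derivative lemma because every term on the right-hand side vanishes. Your additional check that the coefficient of $\frac{dr^*_t}{dt}$ is strictly positive, together with the final mean-value-theorem step, fills in details the paper leaves implicit. You also correctly cite part (3) of Theorem~\ref{Main_Result_1}, whereas the paper cites part (2).
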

\begin{proof}
One can see this immediately from Theorem \ref{Main_Result_1} (2). Here we deduce the result from Lemma \ref{Derivative_of_r^*_t_Higher_Dimensions}. By Lemma \ref{Derivative_of_r^*_t_Higher_Dimensions}, we have
\begin{equation}
\frac{d}{dt}(r^*_t) = 0.
\end{equation}
so $r^*_t$ is constant in $t \in I$. 
\end{proof}

Corollary \ref{r^*_t_is_constant_if_ratio_is_constant_Higher_Dimensions} says that for fixed $R_{1,t}, R_{2,t}$, and $\sizeofboundary_t$, the radius of the free boundary $r^*_t$ is only determined by the ratio $\frac{M_{1,t}}{M_{2,t}}$. In particular, if $M_{1,t}$ and $M_{2,t}$ changes in $t \in I$ in such a way that $\frac{M_{1,t}}{M_{2,t}}$ is constant in $t \in I$, $r^*_t$ is constant.

Our next goal is to study the conditions on $R_{1,t}$ and $R_{2,t}$ for which $r^*_t$ remains invariant. We have the following result.

\begin{proposition}
\label{Equilibrium_for_R_1_and_R_2_Higher_Dimensions}

Assume that $n \geq 3$, and let the setting be as above. Assume $\frac{M_{1,t}}{M_{2,t}}$ is a constant $\alpha$ for all $t \in I$, and that $\sizeofboundary_t$ is also constant in $t \in I$. Then $r^*_t$ is constant in $t \in I$ if 
\begin{equation}
\label{Key_Equation_for_R_1_and_R_2_Higher_Dimensions}
\frac{\frac{d}{dt}(R_{1,t}^{2-n})}{\frac{d}{dt}(R_{2,t}^{2-n})} = -\alpha
\end{equation}
for all $t \in I$.
\end{proposition}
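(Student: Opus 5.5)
I will mirror the argument of Proposition \ref{Equilibrium_for_R_1_and_R_2}, with Lemma \ref{Derivative_of_r^*_t_Higher_Dimensions} playing the role of Lemma \ref{Derivative_of_r^*_t}. The quantities will be $x_{1,t} = R_{1,t}^{2-n} - (r^*_t)^{2-n} > 0$ and $x_{2,t} = (r^*_t + \sizeofboundary_t)^{2-n} - R_{2,t}^{2-n} > 0$. By Theorem \ref{Main_Result_1} (3) we have $\alpha = \frac{M_{1,t}}{M_{2,t}} = \frac{x_{1,t}}{x_{2,t}}$ for every $t \in I$.

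The first step is to convert the hypothesis \eqref{Key_Equation_for_R_1_and_R_2_Higher_Dimensions} into a linear relation between $\frac{dR_{1,t}}{dt}$ and $\frac{dR_{2,t}}{dt}$. Since $\frac{d}{dt}(R_{i,t}^{2-n}) = (2-n)R_{i,t}^{1-n}\frac{dR_{i,t}}{dt}$, the common factor $(2-n)$ cancels in the quotient. Replacing $-\alpha$ by $-\frac{x_{1,t}}{x_{2,t}}$ and cross-multiplying gives
\begin{equation*}
x_{2,t}R_{1,t}^{1-n}\frac{dR_{1,t}}{dt} + x_{1,t}R_{2,t}^{1-n}\frac{dR_{2,t}}{dt} = 0.
\end{equation*}
Dividing by $x_{2,t}^2$ and multiplying by $(n-2)$ then yields
\begin{equation*}
(n-2)\frac{1}{x_{2,t}}R_{1,t}^{1-n}\frac{dR_{1,t}}{dt} + (n-2)\frac{x_{1,t}}{x_{2,t}^2}R_{2,t}^{1-n}\frac{dR_{2,t}}{dt} = 0.
\end{equation*}

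The second step is to apply Lemma \ref{Derivative_of_r^*_t_Higher_Dimensions}, which also gives differentiability of $t \mapsto r^*_t$. The term $\frac{d}{dt}\left(\frac{M_{1,t}}{M_{2,t}}\right)$ vanishes because the ratio is the constant $\alpha$. The term containing $\frac{d\sizeofboundary_t}{dt}$ vanishes because $\sizeofboundary_t$ is constant. The two remaining terms on the right-hand side of \eqref{The_Main_Computational_Equation_Higher_Dimensions} are exactly the left-hand side of the last display, so the whole right-hand side is zero.

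The third step is to divide by the coefficient of $\frac{dr^*_t}{dt}$, namely $(n-2)\left[\frac{1}{x_{2,t}}(r^*_t)^{1-n} + \frac{x_{1,t}}{x_{2,t}^2}(r^*_t+\sizeofboundary_t)^{1-n}\right]$. This coefficient is strictly positive, since $n \geq 3$, $x_{1,t} \geq 0$, $x_{2,t} > 0$ and $r^*_t > 0$. Hence $\frac{dr^*_t}{dt} = 0$ on $I$, and because $I$ is an interval, $r^*_t$ is constant.

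The only delicate point is the use of the hypothesis as a quotient. This requires $\frac{d}{dt}(R_{2,t}^{2-n}) \neq 0$, which is implicit in the statement. I would also note the edge case $x_{1,t} = 0$, i.e.\ $r^*_t = R_{1,t}$. In that case the cross-multiplied relation still holds, but it forces $\frac{dR_{1,t}}{dt} = 0$. This is consistent with the hypothesis, and the rest of the argument goes through unchanged.
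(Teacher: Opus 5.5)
Your proof is correct and follows the paper's argument essentially step for step. You rewrite the hypothesis as $x_{2,t}R_{1,t}^{1-n}\frac{dR_{1,t}}{dt} + x_{1,t}R_{2,t}^{1-n}\frac{dR_{2,t}}{dt} = 0$, divide by $x_{2,t}^2$, and substitute into \eqref{The_Main_Computational_Equation_Higher_Dimensions} to get $\frac{dr^*_t}{dt} = 0$. Your explicit positivity check on the coefficient of $\frac{dr^*_t}{dt}$, and your citation of part (3) rather than (2) of Theorem \ref{Main_Result_1}, are small improvements on the written proof. The edge case $x_{1,t}=0$ you discuss cannot actually occur, since $x_{1,t} = \alpha x_{2,t}$ with $\alpha = M_{1,t}/M_{2,t} > 0$ and $x_{2,t} > 0$.
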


\begin{proof}

Observe that by Theorem \ref{Main_Result_1} (2), $\alpha = \frac{M_{1,t}}{M_{2,t}} = \frac{x_{1,t}}{x_{2,t}}$, where 
\begin{equation}
    x_{1,t} = R_{1,t}^{2-n} - (r^*_t)^{2-n} > 0, x_{2,t} = (r^*_t + \sizeofboundary_t)^{2-n} - R_{2,t}^{2-n} > 0
\end{equation}
for all $t \in I$. Therefore, we have from \eqref{Key_Equation_for_R_1_and_R_2_Higher_Dimensions} that 
\begin{equation}
    \frac{\frac{d}{dt}(R_{1,t}^{2-n})}{\frac{d}{dt}(R_{2,t}^{2-n})} = -\frac{x_{1,t}}{x_{2,t}}.
\end{equation}
Therefore, 
\begin{equation}
    \frac{(2-n)R_{1,t}^{1-n}\frac{d}{dt}(R_{1,t})}{(2-n)R_{2,t}^{1-n}\frac{d}{dt}(R_{2,t})} = -\frac{x_{1,t}}{x_{2,t}},
\end{equation}
so 
\begin{equation}
\label{Key_Equation_Relating_Derivative_of_R_1_and_R_2}
{x_{2,t}}R_{1,t}^{1-n}\left(\frac{d}{dt}R_{1,t}\right) + {x_{1,t}}R_{2,t}^{1-n}\left(\frac{d}{dt}R_{2,t}\right) = 0.
\end{equation}
Divide the equation \eqref{Key_Equation_Relating_Derivative_of_R_1_and_R_2} by $x_{2,t}^2$, and we obtain 
\begin{equation}
\label{Key_Equation_Relating_Derivative_of_R_1t_and_R_2t}
\frac{R_{1,t}^{1-n}}{x_{2,t}}\left(\frac{dR_{1,t}}{dt}\right) + \frac{R_{2,t}^{1-n}x_{1,t}}{x_{2,t}^2}\left(\frac{dR_{2,t}}{dt}\right) = 0.
\end{equation}
Now recall that $\frac{M_{1,t}}{M_{2,t}}$ is a constant $\alpha$ for all $t \in I$, and that $\sizeofboundary_t$ is a constant in $t \in I$, so by Lemma \ref{Derivative_of_r^*_t_Higher_Dimensions} we have
\begin{equation}
\label{Application_of_Lemma_4_1_to_Proposition_4_4}
\left(\frac{dr^*_t}{dt}\right)[\frac{1}{x_{2,t}}(r^*_t)^{1-n} + \frac{x_{1,t}}{x_{2,t}^2}(r^*_t + \sizeofboundary_t)^{1-n}] =  
\\ \frac{1}{x_{2,t}}R^{1-n}_{1,t}\left(\frac{dR_{1,t}}{dt}\right) + \frac{x_{1,t}}{x_{2,t}^2}R^{1-n}_{2,t}\left(\frac{dR_{2,t}}{dt}\right).
\end{equation}
Plug \eqref{Key_Equation_Relating_Derivative_of_R_1t_and_R_2t} into \eqref{Application_of_Lemma_4_1_to_Proposition_4_4}, and we obtain 
\begin{equation}
    \left(\frac{dr^*_t}{dt}\right) = 0,
\end{equation}
so $r^*_t$ is a constant in $t \in I$.  
\end{proof}

We are also interested in studying $\frac{d}{dt}r^*_t$ in terms of $\frac{d}{dt}\sizeofboundary_t$, for fixed $M_{1,t}, M_{2,t}, R_{1,t}, R_{2,t}$. We have the following result.

\begin{corollary}
\label{Relation_between_logr^*_t_and_R_t_Higher_Dimensions}
Assume that $n \geq 3$, and let the setting be as above. Assume that $R_{1,t}, R_{2,t}$ are constants in $t \in I$, and suppose that $\frac{M_{1,t}}{M_{2,t}}$ is a constant $\alpha$ for all $t \in I$. Then we have 
\begin{equation}
\label{Relation_between_logr^*_t_and_R_t_Higher_Dimensions_Equation}
\frac{d}{dt}(r^*_t) = \frac{-\alpha}{\alpha + (1+\frac{\sizeofboundary_t}{r^*_t})^{n-1}}\frac{d}{dt}(\sizeofboundary_t)
\end{equation}
\end{corollary}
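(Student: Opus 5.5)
The plan is to read the identity off Lemma \ref{Derivative_of_r^*_t_Higher_Dimensions}, exactly as in the two-dimensional Corollary \ref{Relation_between_logr^*_t_and_R_t}. Under the hypotheses, $R_{1,t}$ and $R_{2,t}$ are constant, so $\frac{dR_{1,t}}{dt}=\frac{dR_{2,t}}{dt}=0$. Likewise $\frac{M_{1,t}}{M_{2,t}}\equiv\alpha$, so $\frac{d}{dt}\bigl(\frac{M_{1,t}}{M_{2,t}}\bigr)=0$. The lemma also gives differentiability of $t\mapsto r^*_t$. Substituting these into \eqref{The_Main_Computational_Equation_Higher_Dimensions} leaves
\begin{equation*}
\Bigl(\frac{dr^*_t}{dt}\Bigr)(n-2)\Bigl[\frac{(r^*_t)^{1-n}}{x_{2,t}}+\frac{x_{1,t}}{x_{2,t}^2}(r^*_t+\sizeofboundary_t)^{1-n}\Bigr]=-(n-2)\frac{x_{1,t}}{x_{2,t}^2}(r^*_t+\sizeofboundary_t)^{1-n}\frac{d\sizeofboundary_t}{dt}.
\end{equation*}

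Next I would cancel the factor $(n-2)>0$ and multiply both sides by $x_{2,t}>0$. On the left the bracket becomes $(r^*_t)^{1-n}+\frac{x_{1,t}}{x_{2,t}}(r^*_t+\sizeofboundary_t)^{1-n}$. On the right the coefficient becomes $-\frac{x_{1,t}}{x_{2,t}}(r^*_t+\sizeofboundary_t)^{1-n}$. By Theorem \ref{Main_Result_1} (3), which is the identity $\frac{x_{1,t}}{x_{2,t}}=\frac{M_{1,t}}{M_{2,t}}$ used in the proof of the lemma, each occurrence of $\frac{x_{1,t}}{x_{2,t}}$ can be replaced by $\alpha$.

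I would then divide numerator and denominator by $(r^*_t+\sizeofboundary_t)^{1-n}$, using
\begin{equation*}
\frac{(r^*_t)^{1-n}}{(r^*_t+\sizeofboundary_t)^{1-n}}=\Bigl(1+\frac{\sizeofboundary_t}{r^*_t}\Bigr)^{n-1}.
\end{equation*}
This gives \eqref{Relation_between_logr^*_t_and_R_t_Higher_Dimensions_Equation} directly.

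The only point needing care is the legitimacy of the divisions, namely $x_{2,t}>0$ and a nonzero bracket. Both follow from positivity. Since $M_{1,t},M_{2,t}>0$, we have $\alpha>0$, so $\phi_{R_1,R_2,\sizeofboundary,n}(r^*_t)=\alpha>0$. By Lemma \ref{Phi_is_Strictly_Increasing} this forces $r^*_t\in(R_1,R_2-\sizeofboundary_t)$, hence $x_{1,t},x_{2,t}>0$. The bracket is then a sum of positive terms and the final denominator $\alpha+(1+\sizeofboundary_t/r^*_t)^{n-1}$ is positive. Beyond this check I expect no real obstacle; the argument is pure algebra.
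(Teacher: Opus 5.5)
Your proposal is correct and follows the paper's proof: substitute $\frac{dR_{1,t}}{dt}=\frac{dR_{2,t}}{dt}=\frac{d}{dt}\bigl(\frac{M_{1,t}}{M_{2,t}}\bigr)=0$ into the higher-dimensional derivative lemma, replace $\frac{x_{1,t}}{x_{2,t}}$ by $\alpha$, and simplify. Your version is slightly cleaner in two respects. First, you solve for $\frac{dr^*_t}{dt}$ directly, whereas the paper divides by $\frac{d\sizeofboundary_t}{dt}$ and so tacitly assumes it is nonzero. Second, you check explicitly that $x_{1,t},x_{2,t}>0$ and that the bracket is nonzero.
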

\begin{proof}
By Lemma \ref{Derivative_of_r^*_t_Higher_Dimensions} and the fact that $\frac{d}{dt}R_{
1,t} = \frac{d}{dt}R_{2,t} = \frac{d}{dt}(\frac{M_{1,t}}{M_{2,t}}) = 0$, we have
\begin{equation}
\begin{split}
\left(\frac{dr^*_t}{dt}\right)(n-2)[\frac{1}{x_{2,t}}(r^*_t)^{1-n} + \frac{x_{1,t}}{x_{2,t}^2}(r^*_t + \sizeofboundary_t)^{1-n}] = - (n-2)\frac{x_{1,t}}{x_{2,t}^2}(r^*_t + \sizeofboundary_t)^{1-n}\left(\frac{d\sizeofboundary_t}{dt}\right),
\end{split}
\end{equation}
where 
\begin{equation}
    x_{1,t} = R_{1,t}^{2-n} - (r^*_t)^{2-n} > 0, x_{2,t} = (r^*_t + \sizeofboundary_t)^{2-n} - R_{2,t}^{2-n} > 0.
\end{equation}
Divide both sides of the above equation by $(n-2)[\frac{1}{x_{2,t}}(r^*_t)^{1-n} + \frac{x_{1,t}}{x_{2,t}^2}(r^*_t + \sizeofboundary_t)^{1-n}](\frac{d}{dt}\sizeofboundary_t)$, use the fact that $\frac{x_{1,t}}{x_{2,t}} = \frac{M_{1,t}}{M_{2,t}} = \alpha$, and simplify the resulting equation, and we obtain \eqref{Relation_between_logr^*_t_and_R_t_Higher_Dimensions_Equation}, as desired.
\end{proof}

By combining all the results above, we can now prove the second main result.

{\bf Proof of Theorem \ref{Main_Result_3}:}

This follows from Lemma \ref{Derivative_of_r^*_t_Higher_Dimensions}, Corollary \ref{r^*_t_is_constant_if_ratio_is_constant_Higher_Dimensions}, Proposition \ref{Equilibrium_for_R_1_and_R_2_Higher_Dimensions}.
$\Box$

\end{document}